\documentclass[a4paper,11pt]{article}
\usepackage{hyperref}
\usepackage{geometry}
\usepackage{amssymb}
\usepackage{amsmath}
\usepackage{amsthm}
\usepackage{authblk}
\usepackage{paralist}
\usepackage{titlefoot}
\usepackage{xcolor}
\usepackage[numbers]{natbib} 

\newtheorem{theorem}{Theorem}[section]
\newtheorem*{theorem*}{Theorem}
\newtheorem{definition}[theorem]{Definition}
\newtheorem*{definition*}{Definition}
\newtheorem{proposition}[theorem]{Proposition}
\newtheorem{lemma}[theorem]{Lemma}
\newtheorem*{lemma*}{Lemma}

\newtheorem*{corollary*}{Corollary}
\newtheoremstyle{boldremark}
    {\dimexpr\topsep/2\relax} 
    {\dimexpr\topsep/2\relax} 
    {}          
    {}          
    {\bfseries} 
    {.}         
    {.5em}      
    {}          
\theoremstyle{boldremark}
\newtheorem{remark}[theorem]{Remark}

\newtheorem{assumption}[theorem]{Assumption}
\newtheorem*{assumption*}{Assumption}

\newcommand{\R}{\mathbb{R}}

\newcommand{\W}{\mathbb{W}}
\renewcommand{\P}{\mathbb{P}}
\newcommand{\E}{\mathbb{E}}
\newcommand{\A}{\mathbb{A}}

\newcommand{\cF}{\mathcal{F}}

\newcommand{\cH}{\mathcal{H}}

\newcommand{\cP}{\mathcal{P}}

\newcommand{\cL}{\mathcal{L}}

\renewcommand{\epsilon}{\varepsilon}

\newcommand{\dt}{\,\mathrm{d}t}
\newcommand{\ds}{\,\mathrm{d}s}
\newcommand{\dlambda}{\,\mathrm{d}\lambda}
\newcommand{\dgamma}{\,\mathrm{d}\gamma}
\newcommand{\dW}{\,\mathrm{d}W}

\numberwithin{equation}{section}
\allowdisplaybreaks

\begin{document}


\title{Peng's Maximum Principle for McKean-Vlasov Stochastic Differential Equations with Common Noise}
\date{\today}
	
\author[1]{Johan Benedikt Spille}
\author[1]{Wilhelm Stannat}
	
\affil[1]{Technische Universität Berlin, Berlin, Germany}
	
	
\maketitle

\unmarkedfntext{\textit{Mathematics Subject Classification (2020) --- 
93E20, 
49K45, 
49N80, 
60H30, 
60H10, 
49N15 
}}
	
\unmarkedfntext{\textit{Keywords and phrases --- stochastic optimal control, Peng maximum principle, McKean-Vlasov stochastic differential equation with common noise, mean-field SDE, conditional McKean-Vlasov backward SDE, adjoint calculus}}
	
\unmarkedfntext{\textit{Mail}: \textbullet \href{mailto:spille@math.tu-berlin.de}{spille@math.tu-berlin.de},\,\textbullet \href{mailto:stannat@math.tu-berlin.de}{stannat@math.tu-berlin.de}}

\begin{abstract}
    We study a stochastic optimal control problem for McKean–Vlasov stochastic differential equations (SDEs) with common noise, where the dynamics depend on the conditional law of the state.
    We derive a stochastic maximum principle of Peng type without imposing convexity assumptions on the control domain.
    In comparison to the standard McKean--Vlasov case, the maximum principle for the common noise case contains a third adjoint state, which is needed to dualize all second-order Lions derivatives in the Taylor expansion of the cost functional.
    The additional adjoint state, first introduced in \cite{spille2026}, is given by a conditional McKean--Vlasov backward SDE.
    All three adjoint states together allow for a complete linearization of all contributions in the second-order expansion, including interactions between conditionally independent copies of the first variational process.
    
    As part of our analysis, we also prove a general well-posedness result for conditional McKean--Vlasov backward SDEs.
\end{abstract}

\tableofcontents

\section{Introduction}
In this paper, we are concerned with the following optimization problem: Minimize over all admissible controls the cost functional
\begin{equation}\label{eq:cost}
    J(\alpha)
    = \mathbb{E}\left[
    \int_0^T f(t,X_t,\mu_t,\alpha_t)\dt
    + g(X_T,\mu_T)
    \right],
\end{equation}
where $f:[0,T]\times \R^d\times \cP_2(\R^d)\times U\to \R$ and $g:\R^d\times \cP_2(\R^d)\to \R$ are deterministic functions, subject to the state equation, given by the controlled McKean--Vlasov SDE with common noise
\begin{equation}\label{eq:state}
    \mathrm{d}X_t = A(t,X_t,\mu_t,\alpha_t)\dt 
          + B(t,X_t,\mu_t,\alpha_t)\dW^1_t+C(t,X_t,\mu_t,\alpha_t)\dW^0_t,
    \qquad X_0 = x_0,
\end{equation}
where $(A,B,C):[0,T]\times \R^d\times \cP_2(\R^d)\times U\to \R^d\times\R^{d\times d}\times\R^{d\times d}$ are deterministic and measurable, 
$W^i$ are independent $d$-dimensional Brownian motions,
$\mu_t := \mathcal{L}(X_t\mid \cF^0_t)$ denotes the regular version of the conditional distribution of $X_t$ given $\cF_t^0$, which is the filtration generated by $W^0$, $x_0\in L^2(\Omega;\R^d)$ is independent of $W^0$ and $W^1$ and $\cP_2(\R^d)$ denotes the $2$-Wasserstein space.
A control process $\alpha=(\alpha_t)_{0\le t\le T}$ taking values in a non-empty, not necessarily convex metric space $U$ is called admissible if it is progressively measurable. 
The set of all admissible controls is denoted by $\mathbb{A}$. \\
In the above setting, we will deduce a necessary optimality condition called Peng's maximum principle given in Theorem \ref{Theorem:maximum principle}. 
This type of maximum principle was first introduced by \citet{Pontryagin1962} for the deterministic case and later extended by  \citet{Bismut1978} and \citet{Peng1990} to a stochastic setting. These foundational contributions establish adjoint equations and variational techniques that allow for the characterization of optimal controls in terms of forward--backward systems.

More recently, there has been increasing interest in control problems where the dynamics depend not only on the state but also on its distribution. Such SDEs, commonly referred to as McKean--Vlasov or mean-field type SDEs, arise as limits of interacting particle systems \cite{Kac1956,McKean1967,Sznitman1991} and in the limit of multi-player games, which gives rise to mean-field game theory as a framework for analyzing strategic interactions among a large number of agents. Pioneering contributions by \citet{LasryLions2007} and independently by \citet{HuangMalhamCaines2006MeanFieldGames} initiated a vast literature on control of such systems.

An additional layer of complexity arises when common noise is introduced \cite{CarmonaDelarueLacker2016MFGwithCommonNoise}. In this case, the system is influenced by both idiosyncratic and common sources of randomness, leading naturally to conditional McKean--Vlasov dynamics, where the law of the state is taken conditionally on the common noise filtration. This setting is particularly relevant in applications where systemic effects or aggregate uncertainties play a role, for instance in models of financial systems with systemic risk. The presence of common noise significantly alters the analytical structure, as conditional expectations replace unconditional ones and many classical arguments no longer apply.

In limits of mean-field games, one has to distinguish two different settings \cite{CarmonaDelarue2013MFGvsMFControl}. The first one is decentralized decision-making of players and the search of Nash equilibria, commonly referred to as control of mean-field games, where, in the limiting problem, one fixes first the distribution, then solves the control problem and afterwards finds the distribution that fits this solution, e.g. by fixed point theorems. On the other hand there are so-called mean-field control problems or control of McKean--Vlasov SDEs, which correspond to a centralized optimization perspective or cooperative games. Here, one has to optimize a McKean--Vlasov equation directly, which introduces the need of measure-dependent derivatives. Our problem lies in the mean-field control setting. Nevertheless, connections to mean-field games persist, e.g. control of (conditional) McKean--Vlasov equations appears in major--minor games \cite{CarmonaZhu2016MeanFieldsWithMajorMinorPlayers}. 

Several authors have developed stochastic maximum principles in the standard McKean--Vlasov setting. Results for stochastic Pontryagin maximum principles in McKean--Vlasov frameworks can be found in \cite{Andersson2011,CarmonaDelarue2015,CarmonaDelarue2018I,HocquetVogler2020}. The first Peng-type maximum principles were given by \citet{Buckdahn2011,Buckdahn2016,BuckdahnPeng2017}.

Results on mean-field control with common noise remain sparse. \citet{PhamWei2017DPPforCommonNoise} and \citet{Djete2022} have considered a dynamic programming principle approach, \citet{Pham2016LQR} and \citet{Bo2022LQRwithBrownianCommonNoise} treated a linear quadratic problem and \citet{bo2026extendedmeanfieldcontrolproblems} considered a maximum principle with Poissonian common noise. A different perspective is the control of a stochastic Fokker-Planck equation derived for the conditional law \cite{hambly2025optimalcontrolnonlinearstochastic}. Other than that, conditional McKean--Vlasov control problems in different settings have been considered, e.g. introduced through partial observations \cite{BuckdahnLiMa2017ControlWithPartialObservations} or the conditioning on the state leaving a domain \cite{carmona2025conditionalmckeanvlasovcontrol}. Worth mentioning is the Pontryagin maximum principle derived in the appendix in \cite{CarmonaZhu2016MeanFieldsWithMajorMinorPlayers}, which should immediately translate to the common noise setting and shows that for convex control domains, where only a first-order expansion of the cost functional has to be made, a generalization poses no difficulty.

The main challenge for a Peng-type maximum principle is the non-convex control domain, as the study of the variation of the cost functional has to be done via a second-order Taylor expansion.
This was first observed by \citet{Peng1990}. 
The main obstacle then lies in the correct dualization of the terms in the expansion of the cost functional, i.e. the identification of suitable adjoint states to replace terms, which depend implicitly on the variation.
In the McKean--Vlasov case, this expansion involves terms stemming from the Lions derivative, which include interactions between (conditionally) independent copies of the state and variational processes. 
For the classical McKean--Vlasov setting without common noise, it is possible to control these terms via refined estimates and show that they are of higher order (cf. \cite{Buckdahn2016} Proposition 4.3), implying that they play no role in the resulting maximum principle. However, in the presence of common noise this is no longer the case. 
The classical expectation becomes a conditional expectation and the independent copies become conditionally independent copies. Due to the lack of sufficient smoothing properties of conditional expectations, the same higher-order estimates are not possible (cf. Remark \ref{remark:example_ConditionalExpNotOfHigherOrder}).

The approach taken in this paper is designed to overcome these difficulties. 
Building on ideas from \citet{spille2026}, 
we introduce a third adjoint equation, given by the solution to a conditional McKean--Vlasov backward SDE, which allows for a dualization of all second-order terms appearing in the expansion of the cost functional, which cannot be dualized by the first two adjoint equations, as they involve multiple (conditionally) independent copies of the first-order variational process. 
After these dualizations, we are able to derive a maximum principle (cf. Theorem \ref{Theorem:maximum principle}). 
In contrast to the result for the classical McKean--Vlasov setting \cite{spille2026}, the third adjoint process does appear in the maximum principle. 
This shows that this result cannot be achieved without the introduction of the third adjoint equation and that the corresponding terms are not of higher-order in the common noise case.

We also compare this result to the dynamic programming approach from \citet{PhamWei2017DPPforCommonNoise}. In the non-McKean--Vlasov case, there is a canonical connection between the value function and the Hamiltonian maximum principle, i.e. one can identify the first and second adjoint processes as the derivatives of the value function. We heuristically see an analogous result in our setting (cf. Remark \ref{remark:relationToDPP}). The difference is that the second-order Lions derivative splits into two parts ($\partial_y\partial_\mu$ and $\partial_\mu\partial_\mu$), so the first adjoint corresponds to the first-order Lions derivative, the second adjoint corresponds to the $\partial_y\partial_\mu$-part of the second-order Lions derivative and the third adjoint corresponds to the $\partial_\mu\partial_\mu$-part of the second-order Lions derivative.

In the appendix, we give a needed second-order Taylor expansion and see that the remainder can be uniformly estimated if all second derivatives are uniformly Lipschitz. We further give a general well-posedness result for conditional McKean--Vlasov backward SDE.

The remainder of the paper is organized as follows. In Section \ref{sec:Preliminaries}, we introduce the necessary preliminaries, including Lions differentiability and the regularity assumptions on the coefficients. Section \ref{sec:The Variational Equations} is devoted to the definition and analysis of the first- and second-order variational equations. In Section \ref{sec:The Adjoint Equations}, we define the adjoint processes, including the third adjoint equation and its necessary setting. Section \ref{sec:The Duality Relations} establishes the key duality relations required for the variational analysis. In Section \ref{sec:Expansion of Cost Functional and Maximum Principle}, we combine these ingredients to derive the expansion of the cost functional, formulate the maximum principle and discuss its connection to the dynamic programming principle and the derived HJB-equation. In Section \ref{sec:An easy Example}, we illustrate the results with a simple example. Finally, in the Appendix \ref{appendix:Taylor formula for x and mu} we give the required Taylor formula for the expansion of the  cost functional and in Appendix \ref{appendix:Ex and Uni for Controlled MKVSDE with Common Noise} we give a well-posedness result for general conditional McKean--Vlasov backward SDE.

\section{Preliminaries}\label{sec:Preliminaries}
We will need the same assumptions as \cite{spille2026} for our coefficients, so we take these preliminaries from there.\\
We recall the notion of Lions differentiability. Let $\mathcal{P}_2(\mathbb{R}^d)$ denote the $2$-Wasserstein space (for more details, see \cite{Villani2009}).  
A mapping $\varphi:\mathcal{P}_2(\mathbb{R}^d)\to\mathbb{R}^m$ is said to be Lions differentiable at $\mu$ if there exists a measurable mapping 
$\partial_\mu\varphi(\mu):\mathbb{R}^d\to\mathbb{R}^{m\times d}$ such that, for any $Y\sim\mu$, the lifted mapping $\tilde{\varphi}: L^2(\Omega,\cF,\P;\R^d)\mapsto \R^{m},X\mapsto \varphi(\mathcal{L}(X))$ is Fréchet differentiable in $L^2(\Omega,\cF,\P;\R^d)$ and
\begin{equation}
    D\tilde\varphi(X) Y 
    = \mathbb{E}\left[\partial_\mu\varphi(\mu)(X) Y
    \right],
    \qquad \text{for all }Y\in L^2(\Omega,\cF,\P;\R^d).
\end{equation}
For more details we refer to \cite{CarmonaDelarue2018I}.
We now introduce the necessary regularity for the maximum principle (\cite{BuckdahnPeng2017,Buckdahn2016} from Definition 2.1, also \cite{CarmonaDelarue2018I} Chapter 5.6).
\begin{definition}
    We say that $\varphi \in C_b^{1,1}(\mathcal{P}_2(\mathbb{R}^d),\R^m)$,
    if there exists for all $\vartheta \in L^2(\Omega, \mathbb{R}^d)$ an $\cL(\vartheta)$-modification of $\partial_\mu \varphi(\cL(\vartheta))$, again denoted by $\partial_\mu \varphi(\cL(\vartheta))$, such that $\partial_\mu \varphi: \mathcal{P}_2(\mathbb{R}^d) \times \mathbb{R}^d \to \mathbb{R}^{m\times d}$ is bounded and Lipschitz continuous, i.e. there is some $C>0$ such that
    \begin{enumerate}[(i)]
        \item $|\partial_\mu \varphi(\mu)(y)| \leq C$, for all $ \mu \in \mathcal{P}_2(\mathbb{R}^d)$ and $ y \in \mathbb{R}^d$, and
        \item $|\partial_\mu \varphi(\mu)(y)-\partial_\mu \varphi(\mu^{\prime})(y^{\prime})| \leq C(W_2(\mu, \mu^{\prime})+|y-y^{\prime}|)$, for all $ \mu, \mu^{\prime} \in \mathcal{P}_2(\mathbb{R}^d)$ and $ y,y^{\prime} \in \mathbb{R}^d$.
    \end{enumerate}
\end{definition}
For $(\mu,y)\mapsto \partial_\mu\varphi(\mu)(y)$ we now have two different partial derivatives $\partial_{y}\partial_\mu \varphi$ and $\partial_{\mu}\partial_\mu \varphi$.
\begin{definition}
    We say that $\varphi \in C_b^{2,1}(\mathcal{P}_2(\mathbb{R}^d),\R^m)$, if $\varphi \in C_b^{1,1}(\mathcal{P}_2(\mathbb{R}^d),\R^m)$ and
    \begin{enumerate}[(i)]
        \item $\partial_\mu \varphi(\cdot)( y) \in C_b^{1,1}(\mathcal{P}_2(\mathbb{R}^d),\R^{m\times d})$, for all $y \in \mathbb{R}^d$, and $\partial_\mu^2 \varphi$ : $\mathcal{P}_2(\mathbb{R}^d) \times \mathbb{R}^d \times \mathbb{R}^d \to \mathbb{R}^{m\times d\times d}$ is bounded and Lipschitz-continuous and
        \item $\partial_\mu \varphi(\mu): \mathbb{R}^d \to \mathbb{R}^{m\times d}$ is differentiable, for every $\mu \in \mathcal{P}_2(\mathbb{R}^d)$, and its derivative $\partial_y \partial_\mu \varphi: \mathcal{P}_2(\mathbb{R}^d) \times \mathbb{R}^d \to \mathbb{R}^{m\times d\times d}$ is bounded and Lipschitz-continuous.
    \end{enumerate}
\end{definition}
These versions of the derivatives are unique (cf. \cite{CarmonaDelarue2018II} Remark 4.12) and we will always be using these versions.
For our coefficients we will need the following regularity.
\begin{definition}
    For $\varphi:\R^d\times\cP_2(\R^d)\to \R^m $ we say $\varphi\in C_b^{2,1}(\R^d\times\cP_2(\R^d),\R^m)$ if
    \begin{enumerate}[(i)]
        \item $\varphi(\cdot ,\mu)\in C^2_b(\R^d,\R^m)$ for all $\mu\in\cP_2(\R^d)$,
        \item $\varphi(x,\cdot)\in C^{2,1}_b(\cP_2(\R^d),\R^m)$ and $\partial_x\varphi(x,\cdot)\in C^{1,1}_b(\cP_2(\R^d),\R^{m\times d})$ for all $x\in\R^d$,
        \item $\partial_\mu\varphi(\cdot ,\mu)(\cdot)\in C^1_b(\R^d\times \R^d,\R^{m\times d})$ for all $\mu\in\cP_2(\R^d)$ and
        \item $\varphi$ and all first and second-order derivatives of $\varphi$ are bounded and Lipschitz continuous in all variables.
    \end{enumerate}
\end{definition}
We will use shorthand notations for our derivatives. For $\varphi \in C_b^{2,1}(\R^d\times\cP_2(\R^d),\R^m)$ define $\varphi_x:=\partial_x\varphi$, $\varphi_\mu:=\partial_\mu \varphi$, $\varphi_{xx}:=\partial_x\partial_x\varphi$. Further, for $(x,\mu,y)\mapsto \partial_\mu\varphi(x,\mu)(y)$, we denote $\varphi_{x\mu}:=\partial_{x}\partial_\mu \varphi$, $\varphi_{y\mu}:=\partial_{y}\partial_\mu \varphi$ and $\varphi_{\mu\mu}:=\partial_{\mu}\partial_\mu \varphi$.
\begin{assumption}\label{Assumption:C^{2,1}}
    The coefficients $A,B,C,f,g$ are measurable in all variables and for all $t\in[0,T]$ and $u\in U$ 
    \begin{enumerate}[(i)]
        \item $A(t,\cdot,\cdot,u)\in C_b^{2,1}(\R^d\times\cP_2(\R^d),\R^d)$,
        \item $B(t,\cdot,\cdot,u),C(t,\cdot,\cdot,u)\in C_b^{2,1}(\R^d\times\cP_2(\R^d),\R^{d\times d})$,
        \item $f(t,\cdot,\cdot,u)\in C_b^{2,1}(\R^d\times\cP_2(\R^d),\R)$ and 
        \item $g\in C_b^{2,1}(\R^d\times\cP_2(\R^d),\R)$,
    \end{enumerate}
    with Lipschitz and boundedness constants uniform in $t$ and $u$.
\end{assumption}
Remember that we define a control process $\alpha=(\alpha_t)_{0\le t\le T}$ taking values in the non-empty, not necessarily convex metric space $U$ to be admissible if it is progressively measurable and denote the set of all admissible controls by $\mathbb{A}$. Under the above assumptions there is a unique solution to the state equation \eqref{eq:state} for every $\alpha\in \A$. We discuss this in more detail in Appendix \ref{appendix:Ex and Uni for Controlled MKVSDE with Common Noise}.

For brevity's sake, we will often write $\theta_t:=(t,X_t,\mu_t,\alpha_t)$, where $\alpha$ will be optimal, $X$ the corresponding solution to \eqref{eq:state} and $\mu$ its conditional law. Later, $\theta$ will also contain the first adjoint states $(p,q,r)$ (which will be introduced in Section \ref{sec:The Adjoint Equations}) such that $\theta_t:=(t,X_t,\mu_t,\alpha_t,p_t,q_t,r_t)$. Generally, by an abuse of notation, we will also write $\theta$ into functions that takes less arguments, by which we imply that only the needed arguments are taken.

Moreover, we will often denote the transpose of a matrix $G\in\R^{d\times d}$ as its dual operator $G^*=G^\top$ and use the tensor product $x\otimes y:=xy^\top$ for $x,y\in \R^d$.

Further, we will be interested in the order of processes with respect to some parameter. For this we introduce the $\mathcal{O}$-notation: For mappings $f:(0,\infty)\to \R^m$, $g:(0,\infty)\to \R$, we write $f\in O(g(\epsilon))$ if $\limsup_{\epsilon\to 0} \left|\frac{f(\epsilon)}{g(\epsilon)}\right|< \infty$ and $f\in o(g(\epsilon))$ if $\lim_{\epsilon\to 0} \left|\frac{f(\epsilon)}{g(\epsilon)}\right|=0$.

\section{The Variational Equations}\label{sec:The Variational Equations}

For the rest of the paper, let $\alpha\in \A$ be optimal with corresponding state process $X$. Note that we do not make any statements about the existence of optimal controls in this paper, we refer to \cite{Djete2022EquivalenceBetweenDiffFormAndExistenceOfRelaxedOptimalControl} for some results on existence of optimal relaxed controls. For a measurable subset $E_\epsilon\subset [0,T]$ with Lebesgue measure $|E_\epsilon|=\epsilon>0$ and $\beta\in \A$ define the spike variation
\begin{equation*}
    \alpha_t^{\epsilon} \triangleq\begin{cases}
    \beta_t,&\text{for } t \in E_{\epsilon}, \\
    \alpha_t,&\text{for }  t \in E_{\epsilon}^c.
    \end{cases}
\end{equation*}
We denote $X^\epsilon$ the solution to the state equation \eqref{eq:state} corresponding to $\alpha^\epsilon$, $\mu^\epsilon$ the corresponding conditional distribution and $\Delta X = X^\epsilon-X$. Further, we denote $\delta A(t) = A(t,X_t,\mu_t,\alpha^\epsilon_t)-A(t,X_t,\mu_t,\alpha_t)$ and similarly for any other mapping. Due to the common noise, the variational equations will contain conditional laws. To write these down properly we need to introduce a lifting of the random variables. To simplify the notation, we work in a special setup: Let $W^i$ be $d$-dimensional Brownian motions on a complete filtered probability space $(\Omega^i,(\mathcal{F}^i)_{t\in[0,T]},\mathbb{P}^i)$. We assume that $(\mathcal{F}^0_t)_{0\le t\le T}$ is the augmented filtration generated by $W^0$ and 
$(\mathcal{F}^1_t)_{0\le t\le T}$ is the augmented filtration generated by $W^1$ and $x_0$ such that $x_0$ is independent of $W^1$. 
The whole analysis will now be done on the completed product space 
$(\Omega,(\mathcal{F})_{t\in[0,T]},\P):=\bigotimes_{i=0}^1 (\Omega^i,(\mathcal{F}^i)_{t\in[0,T]},\mathbb{P}^i)$, where we take the canonical lift of all random variables to the product space such that also $x_0$, $W^0$ and $W^1$ are independent.
Now, let $(\tilde{\Omega},(\tilde{\mathcal{F}})_{t\in[0,T]},\tilde{\P})=(\Omega^1,(\mathcal{F}^1)_{t\in[0,T]},\mathbb{P}^1)$ and, for a random variable $\Phi:\Omega\to \R^m$, define
\begin{equation*}
    \tilde{\Phi}:\Omega^0\times \tilde{\Omega}\to \R^m,\quad (\omega^0,\tilde{\omega}) \mapsto \Phi(\omega^0,\tilde{\omega}).
\end{equation*}
Then, for another random variable $\Psi:\Omega\to \R^m$ and for a measurable $\varphi$,
\begin{equation*}
    \tilde{\E}\left[\varphi(\Psi,\tilde{\Phi})\right]:\Omega \to \R^n ,\quad \omega=(\omega_0,\omega_1)\mapsto 
    \int_{\Omega^1}\varphi(\Psi(\omega),\Phi(\omega^0,\tilde{\omega}))\,\P^1(\mathrm{d}\tilde{\omega})
\end{equation*}
constitutes a measurable (by Fubini) random variable, which can be seen as a conditional expectation (cf. Lemma 2.4 \cite{CarmonaDelarue2018II}). We remember our $\theta$-notation from the end of Section \ref{sec:Preliminaries}.\\
Now, we define the first variational process $Y$ as a solution to
\begin{equation}
    \begin{aligned}
        \mathrm{d}Y_t 
        =& \left( A_x(\theta_t)Y_t +\tilde{\E}\left[A_\mu(\theta_t)( \tilde{X}_t) \tilde{Y}_t\right] 
        + \delta A(t) \right) \dt  \\
        &+ \left( B_x(\theta_t)Y_t + \tilde{\E}\left[B_\mu(\theta_t)( \tilde{X}_t) \tilde{Y}_t\right]
        + \delta B(t) \right) \dW^1_t \\
        &+ \left( C_x(\theta_t)Y_t + \tilde{\E}\left[C_\mu(\theta_t)( \tilde{X}_t) \tilde{Y}_t\right]
        + \delta C(t) \right) \dW^0_t  \\
        Y_0 =& 0,
    \end{aligned}\label{eq:first variational process SDE}
\end{equation}
and the second variational process $Z$ as a solution to
\begin{equation}
    \begin{aligned}
        \mathrm{d}Z_t 
        = & \Big( A_x(\theta_t)Z_t +\tilde{\E}\left[A_\mu(\theta_t)( \tilde{X}_t) \tilde{Z}_t\right] + \frac{1}{2} A_{xx}(\theta_t)[Y_t,Y_t]
        + \frac{1}{2}\tilde{\E}\left[ A_{y\mu}(\theta_t)( \tilde{X}_t)[ \tilde{Y}_t,\tilde{Y}_t]\right]\\
        & 
        + \frac{1}{2}\tilde{\E} \left[\tilde{\tilde\E}\left[ A_{\mu\mu}(\theta_t)( \tilde{X}_t,\tilde{\tilde{X}}_t)[ \tilde{Y}_t,\tilde{\tilde{Y}}_t]\right]\right]
        +\tilde{\E}\left[ A_{x\mu}(\theta_t)( \tilde{X}_t)[ \tilde{Y}_t,Y_t]\right]\\
        &+ \delta A_x(t) Y_t+\tilde{\E}\left[\delta A_\mu(t)(\tilde{X}_t) \tilde{Y}_t\right]  \Big)\dt  \\
        &+\Big( B_x(\theta_t)Z_t +\tilde{\E}\left[B_\mu(\theta_t)( \tilde{X}_t) \tilde{Z}_t\right] + \frac{1}{2} B_{xx}(\theta_t)[Y_t,Y_t]
        + \frac{1}{2}\tilde{\E}\left[ B_{y\mu}(\theta_t)( \tilde{X}_t)[ \tilde{Y}_t,\tilde{Y}_t]\right]\\
        & 
        + \frac{1}{2}\tilde{\E} \left[\tilde{\tilde\E}\left[ B_{\mu\mu}(\theta_t)( \tilde{X}_t,\tilde{\tilde{X}}_t)[ \tilde{Y}_t,\tilde{\tilde{Y}}_t]\right]\right]
        +\tilde{\E}\left[ B_{x\mu}(\theta_t)( \tilde{X}_t)[ \tilde{Y}_t,Y_t]\right]\\
        &+ \delta B_x(t) Y_t+\tilde{\E}\left[\delta B_\mu(t)(\tilde{X}_t) \tilde{Y}_t\right]\Big)  \dW^1_t\\
        &+\Big( C_x(\theta_t)Z_t +\tilde{\E}\left[C_\mu(\theta_t)( \tilde{X}_t) \tilde{Z}_t\right] + \frac{1}{2} C_{xx}(\theta_t)[Y_t,Y_t]
        + \frac{1}{2}\tilde{\E}\left[ C_{y\mu}(\theta_t)( \tilde{X}_t)[ \tilde{Y}_t,\tilde{Y}_t]\right]\\
        & 
        + \frac{1}{2}\tilde{\E} \left[\tilde{\tilde\E}\left[ C_{\mu\mu}(\theta_t)( \tilde{X}_t,\tilde{\tilde{X}}_t)[ \tilde{Y}_t,\tilde{\tilde{Y}}_t]\right]\right]
        +\tilde{\E}\left[ C_{x\mu}(\theta_t)( \tilde{X}_t)[ \tilde{Y}_t,Y_t]\right]\\
        &+ \delta C_x(t) Y_t+\tilde{\E}\left[\delta C_\mu(t)(\tilde{X}_t) \tilde{Y}_t\right]\Big)  \dW^0_t\\
        Z_0 =& 0. 
    \end{aligned}\label{eq:second variational process SDE}
\end{equation}
\begin{remark} \label{remark:after def of variational eq}
    \begin{enumerate}[(i)]
        \item The above equations do not only depend on $\cL(X_t\mid\cF^0_t)$, $\cL(Y_t\mid\cF^0_t)$ and $\cL(Z_t\mid\cF^0_t)$, but on the joint conditional law $\cL(X_t,Y_t\mid\cF^0_t)$ and $\cL(X_t,Z_t\mid\cF^0_t)$, but considering $(X,Y,Z)$ together in one equation, we get existence and uniqueness by \ref{theorem_appendix:ex and uni of MKV SDE w common noise}.
        \item Clearly, the coefficients of the above equations are only progressive with respect to the whole filtration due to the dependence on $X_t$ and $\alpha_t$, so one cannot get a closed form Fokker-Planck SPDE for the conditional law.
        \item We emphasize the dependence of $Y$ and $Z$ on the spike variation and therefore $\epsilon$, but refrain from denoting them $Y^\epsilon$ and $Z^\epsilon$ for the sake of readability.
        \item We note the measurability discussions in \cite{CarmonaDelarue2018II} after Theorem 4.14 and Theorem 4.17, which imply sufficient measurability such that all integrals are well-defined. Our assumptions correspond to so-called full $\mathcal{C}^2$-regularity therein.
        \item A standard argument shows that Schwarz' theorem holds (cf. \cite{CarmonaDelarue2018II} Remark 4.16). Therefore, we only use the $\partial_x\partial_\mu$-terms instead of both $\partial_\mu\partial_x$ and $\partial_x\partial_\mu$.
    \end{enumerate}
\end{remark}
We get the following lemma, which establishes the order of the variational processes in $\epsilon$, that will be crucial for the derivation of the maximum principle from the expansion of the cost functional.
\begin{lemma}\label{lemma:Estimates on Y and Z}
    If Assumption \ref{Assumption:C^{2,1}} holds, then, for any $k \geq 1$,
    \begin{align}
        &\mathbb{E}\left[\sup _{t \in[0, T]}\left\|\Delta X_t\right\|^{2 k}\right] \in O(\epsilon^k), \label{eq:DeltaXEstimate}\\
        &\mathbb{E}\left[\sup _{t \in[0, T]}\left\|Y_t\right\|^{2 k}\right] \in O(\epsilon^k), \label{eq:FirstVariationalEstimate}\\
        &\mathbb{E}\left[\sup _{t \in[0, T]}\left\|Z_t\right\|^{2 k}\right] \in O(\epsilon^{2k}), \label{eq:SecondVariationalEstimate} \\
        &\mathbb{E}\left[\sup _{t \in[0, T]}\left\|\Delta X_t-Y_t\right\|^{2 k}\right] \in O(\epsilon^{2k}). \label{eq:DeltaX minus first Variational Estimate}
    \end{align}
\end{lemma}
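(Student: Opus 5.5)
The plan is to follow the classical Peng estimates (as in \cite{Buckdahn2016,spille2026}) and handle the conditional-law terms through the elementary contraction properties of $\tilde{\E}$. For any random variables $\Phi$ and $k\ge1$, Jensen's inequality and Fubini give
\begin{equation*}
\E\left[\left|\tilde{\E}[\tilde{\Phi}]\right|^{2k}\right]\le \E\left[|\Phi|^{2k}\right],
\qquad
\E\left[\sup_{s\le t}\left|\tilde{\E}[\tilde{\Phi}_s]\right|^{2k}\right]\le \E\left[\sup_{s\le t}|\Phi_s|^{2k}\right].
\end{equation*}
In the second inequality we use $\sup_s|\tilde{\E}[\tilde\Phi_s]|\le \tilde{\E}[\sup_s|\tilde\Phi_s|]$. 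Also $W_2(\mu^\epsilon_t,\mu_t)^2\le \tilde{\E}[|\Delta\tilde X_t|^2]$, since $(X^\epsilon_t,X_t)$ is a conditional coupling. With these facts, every mean-field term is dominated in $L^{2k}$ by the corresponding non-mean-field term, and the estimates reduce to the standard ones.

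\textbf{Step 1: \eqref{eq:DeltaXEstimate} and \eqref{eq:FirstVariationalEstimate}.} Write $\Delta X$ as a linear-type equation. Its drift and diffusion coefficients split into a Lipschitz part, bounded by $C(|\Delta X_t|+\tilde{\E}[|\Delta\tilde X_t|^2]^{1/2})$, plus the spike part $\delta A(t),\delta B(t),\delta C(t)$, which is bounded and supported on $E_\epsilon$. Applying BDG, Hölder and the contraction property gives
\begin{equation*}
\E\left[\sup_{s\le t}|\Delta X_s|^{2k}\right]\le C\int_0^t \E\left[\sup_{r\le s}|\Delta X_r|^{2k}\right]\ds + C\,\E\left[\Big(\int_0^T \mathbf 1_{E_\epsilon}(s)\ds\Big)^{k}\right].
\end{equation*}
The last term comes from the stochastic integrals; the drift part gives the smaller $\epsilon^{2k}$. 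It equals $C\epsilon^k$, and Gronwall's inequality yields \eqref{eq:DeltaXEstimate}. The same computation for \eqref{eq:first variational process SDE} gives \eqref{eq:FirstVariationalEstimate}, since its coefficients are linear with bounded derivatives plus the same spike terms. The well-posedness and integrability of $(X,Y,Z)$ come from Theorem \ref{theorem_appendix:ex and uni of MKV SDE w common noise}.

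\textbf{Step 2: \eqref{eq:SecondVariationalEstimate}.} The equation for $Z$ is linear in $(Z,\tilde Z)$, with source terms of three types:
\begin{itemize}
\item[(a)] quadratic terms in $Y$, namely $Y\otimes Y$, $\tilde Y\otimes\tilde Y$, $\tilde Y\otimes\tilde{\tilde Y}$ and $\tilde Y\otimes Y$, each with bounded coefficients;
\item[(b)] the spike-derivative terms $\delta A_x(t)Y_t$ and $\tilde{\E}[\delta A_\mu(t)(\tilde X_t)\tilde Y_t]$, and their $B$, $C$ analogues.
\end{itemize}
For (a), $\E[\sup|Y|^{4k}]\in O(\epsilon^{2k})$ by \eqref{eq:FirstVariationalEstimate} with $2k$ in place of $k$. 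For the double tilde, use $|\tilde{\E}\tilde{\tilde\E}[\tilde Y\tilde{\tilde Y}]|\le \tilde{\E}[|\tilde Y|^2]$ together with the contraction property. For (b), the terms vanish off $E_\epsilon$. The $\dt$ part gives $\E[(\int_{E_\epsilon}|Y|)^{2k}]\le \epsilon^{2k}\E[\sup|Y|^{2k}]$. The stochastic-integral part gives $\E[(\int_{E_\epsilon}|Y|^2)^{k}]\le \epsilon^{k}\E[\sup|Y|^{2k}]\in O(\epsilon^{2k})$. BDG and Gronwall then conclude.

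\textbf{Step 3: \eqref{eq:DeltaX minus first Variational Estimate}.} This is the main step. Set $\xi=\Delta X-Y$ and Taylor-expand to first order with integral remainder around $\theta_t$ in $(x,\mu)$, at fixed control $\alpha^\epsilon_t$. This gives
\begin{equation*}
A(t,X^\epsilon_t,\mu^\epsilon_t,\alpha^\epsilon_t)-A(\theta_t)
= \delta A(t) + A_x(\theta_t)\Delta X_t+\tilde{\E}[A_\mu(\theta_t)(\tilde X_t)\Delta\tilde X_t] + R_t,
\end{equation*}
where $R_t$ collects two kinds of error. The first is a spike-derivative contribution, $\mathbf 1_{E_\epsilon}$ times a bounded quantity times $(|\Delta X_t|+\tilde{\E}[|\Delta\tilde X_t|^2]^{1/2})$. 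The second is the Lipschitz continuity of the first derivatives, which is quadratic: $C(|\Delta X_t|^2+\tilde{\E}[|\Delta\tilde X_t|^2])$. The Lions-derivative Taylor remainder is handled via the lift along the segment $\tilde X_t+\lambda\Delta\tilde X_t$; joint Lipschitz continuity of $\partial_\mu A$ in $(\mu,y)$ gives the quadratic bound.

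Subtracting \eqref{eq:first variational process SDE} shows that $\xi$ solves the same linear McKean--Vlasov equation with source $R$. By Step 1, both parts of $R$ are $O(\epsilon^{2k})$ in the BDG/Gronwall scheme, exactly as in Step 2, and \eqref{eq:DeltaX minus first Variational Estimate} follows. The only delicate point is justifying the integral-form Taylor expansion in the measure argument under conditioning. For this I would apply the unconditional Lions chain rule $\omega^0$-wise on $\tilde\Omega$, which is legitimate by the product-space construction and the measurability remarks in Remark \ref{remark:after def of variational eq}(iv).
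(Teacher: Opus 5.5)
Your proposal is correct and follows the paper's argument. It uses BDG and Gronwall, with spike terms supported on $E_\epsilon$ and conditional-law terms controlled via $W_2(\mu^\epsilon_t,\mu_t)^2\le\tilde{\E}[\|\Delta\tilde X_t\|^2]$ and Fubini, and a mean-value expansion along $X+\lambda\Delta X$ for $\Delta X-Y$; working with the integral form plus BDG/H\"older instead of It\^o's formula on $\|\cdot\|^2$ with Young absorption is a cosmetic difference. Your explicit treatment of the $\mathbf{1}_{E_\epsilon}$-supported control-jump part of $A_x(\theta^{\lambda,\epsilon}_t)-A_x(\theta_t)$ spells out a point the paper covers only under ``Lipschitz continuity of the coefficients''.
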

\begin{proof}
    Towards \eqref{eq:DeltaXEstimate}, using Ito's formula, we see that
    \begin{align*}
        \mathbb{E}\left[\sup _{t \in[0, T]}\left\|X_t^{\epsilon}-X_t\right\|^{2 k}\right]
        =& \E\Bigg[\bigg(\sup_{t\in[0,T]}\int_0^t 2\left\langle A(s,X^\epsilon_s,\mu^\epsilon_s,\alpha^\epsilon_s)-A(s,X_s,\mu_s,\alpha^\epsilon_s)+\delta A(s), X^\epsilon_s-X_s\right\rangle \\
        &+\left\| B(s,X^\epsilon_s,\mu^\epsilon_s,\alpha^\epsilon_s)-B(s,X_s,\mu_s,\alpha^\epsilon_s)+\delta B(s)\right\|_{L_2(\R^d)}^2\\
        &+ \left\| C(s,X^\epsilon_s,\mu^\epsilon_s,\alpha^\epsilon_s)-C(s,X_s,\mu_s,\alpha^\epsilon_s)+\delta C(s)\right\|_{L_2(\R^d)}^2 \ds\\
        &+2\sup_{t\in[0,T]} \int_0^t \langle X^\epsilon_s-X_s, B(s,X^\epsilon_s,\mu^\epsilon_s,\alpha^\epsilon_s)-B(s,X_s,\mu_s,\alpha^\epsilon_s)+\delta B(s) \dW^1_s\rangle \\
        &+2\sup_{t\in[0,T]} \int_0^t \langle X^\epsilon_s-X_s, C(t,X^\epsilon_s,\mu^\epsilon_s,\alpha^\epsilon_s)-C(t,X_s,\mu_s,\alpha^\epsilon_s)+\delta C(s) \dW^0_s\rangle\bigg)^k\Bigg].
    \end{align*}
    Now, using the Cauchy-Schwarz and BDG inequalities, 
    using the conditions on the coefficients, the Young inequality and $\W_2(\mu_t,\mu^\epsilon_t)^2\leq \tilde{\E}\left[\|\tilde{X}^\epsilon_t-\tilde{X}_t\|^2\right]$, we get
    \begin{align*}
        &\mathbb{E}\left[\sup _{t \in[0, T]}\left\|X_t^{\epsilon}-X_t\right\|^{2 k}\right]\\
        \leq & c_1\E\Bigg[\bigg(\int_0^T \left( \| X^\epsilon_s-X_s\|+\W_2(\mu_s,\mu^\epsilon_s)\right)\| X^\epsilon_s-X_s\|
        +\| \delta A(s)\|\| X^\epsilon_s-X_s\|\\
        &+\left( \| X^\epsilon_s-X_s\|+\W_2(\mu_s,\mu^\epsilon_s)\right)^2+\left\|\delta B(s)\right\|_{L_2(\R^d)}^2
        +\left\|\delta C(s)\right\|_{L_2(\R^d)}^2 \ds\bigg)^k\Bigg]\\
        &+\E\Bigg[\bigg(c_2\sup_{s\in[0,T]}\| X^\epsilon_s-X_s\|^2+c_3\int_0^T \left( \| X^\epsilon_s-X_s\|+\W_2(\mu_s,\mu^\epsilon_s)\right)^2+\left\|\delta B(s)\right\|_{L_2(\R^d)}^2 \ds \\
        &+c_2\sup_{s\in[0,T]}\| X^\epsilon_s-X_s\|^2+c_3\int_0^T \left( \| X^\epsilon_s-X_s\|+\W_2(\mu_s,\mu^\epsilon_s)\right)^2+\left\|\delta C(s)\right\|_{L_2(\R^d)}^2 \ds\bigg)^k\Bigg]\\
        \leq & c_4\E\left[\int_0^T \sup_{s\in[0,t]}\| X^\epsilon_s-X_s\|^{2k} \dt\right]+\frac{1}{2} \E\left[\sup_{s\in[0,T]}\| X^\epsilon_s-X_s\|^{2k}\right]
        \\
        &+c_5\E\left[\left(\int_{E_\epsilon}\| \delta A(s)\|^2+\left\|\delta B(s)\right\|_{L_2(\R^d)}^2
        +\left\|\delta C(s)\right\|_{L_2(\R^d)}^2 \ds\right)^k\right],
    \end{align*}
    and the last term is of order $O(\epsilon^k)$, due to the boundedness of the coefficients and since the $\delta$-terms vanish outside $E_\epsilon$. Thus, Gronwall gives \eqref{eq:DeltaXEstimate}.\\
    Towards \eqref{eq:FirstVariationalEstimate}, using Ito's formula, we get
    \begin{align*}
        \E\left[\sup_{t\in [0,T]}\|Y_t\|^{2k}\right]
        =& \E\Bigg[\bigg(\sup_{t\in[0,T]}\int_0^t 2\left\langle A_x(\theta_s)Y_s+\tilde{\E}\left[A_\mu(\theta_s)(\tilde{X}_s) \tilde{Y}_s\right]
        + \delta A(s), Y_s\right\rangle  \\
        &+ \left\| B_x(\theta_s)Y_s + \tilde{\E}\left[B_\mu(\theta_s)(\tilde{X}_s) \tilde{Y}_s\right] 
        + \delta B(s)\right\|_{L_2(\R^d)}^2\\
        &+\left\| C_x(\theta_s)Y_s + \tilde{\E}\left[C_\mu(\theta_s)(\tilde{X}_s) \tilde{Y}_s\right] 
        + \delta C(s) \right\|_{L_2(\R^d)}^2\ds\\
        &+2\sup_{t\in[0,T]} \int_0^t \langle Y_s, B_x(\theta_s)Y_s + \tilde{\E}\left[B_\mu(\theta_s)(\tilde{X}_s) \tilde{Y}_s\right] 
        + \delta B(s)  \dW^1_s\rangle\\
        &+2\sup_{t\in[0,T]} \int_0^t \langle Y_s, C_x(\theta_s)Y_s + \tilde{\E}\left[C_\mu(\theta_s)(\tilde{X}_s) \tilde{Y}_s\right] 
        + \delta C(s)  \dW^0_s\rangle\bigg)^k\Bigg].
    \end{align*}
    Again, using the BDG inequality, Young, Cauchy-Schwarz and the assumptions on the coefficients
    \begin{align*}
        \E\left[\sup_{t\in [0,T]}\|Y_t\|^{2k}\right]
        \leq & c_6\E\Bigg[\bigg(\int_0^T \| A_x(\theta_s)\|\|Y_s\|^2+\tilde{\E}\left[\|A_\mu(\theta_s)(\tilde{X}_s)\|\|\tilde{Y}_s\|\right]\|Y_s\|
        +\|\delta A(s)\|\|Y_s\|  \\
        &+ \left\| B_x(\theta_s)Y_s + \tilde{\E}\left[B_\mu(\theta_s)(\tilde{X}_s) \tilde{Y}_s\right] 
        + \delta B(s)\right\|_{L_2(\R^d)}^2\\
        &+\left\| C_x(\theta_s)Y_s + \tilde{\E}\left[C_\mu(\theta_s)(\tilde{X}_s) \tilde{Y}_s\right] 
        + \delta C(s) \right\|_{L_2(\R^d)}^2\ds\bigg)^k\Bigg]\\
        &+c_7\E\Bigg[\bigg(\int_0^T \|Y_s\|^2\| B_x(\theta_s)Y_s + \tilde{\E}\left[B_\mu(\theta_s)(\tilde{X}_s) \tilde{Y}_s\right] 
        + \delta B(s)\|_{L_2(\R^d)}^2 \ds\bigg)^{\frac{k}{2}}\Bigg]\\
        &+c_7\E\Bigg[\bigg(\int_0^T \|Y_s\|^2\| C_x(\theta_s)Y_s + \tilde{\E}\left[C_\mu(\theta_s)(\tilde{X}_s) \tilde{Y}_s\right] 
        + \delta C(s)\|_{L_2(\R^d)}^2  \ds\bigg)^{\frac{k}{2}}\Bigg]\\
        \leq & c_8\E\Bigg[\int_0^T \sup_{s\in[0,t]}\|Y_s\|^{2k} \dt+\frac{1}{2}\E\Bigg[\sup_{t\in [0,T]}\|Y_s\|^{2k}\Bigg]\\
        &+c_9\E\Bigg[\bigg(\int_{E_\epsilon} \|\delta A(s)\|^2  
        + \|\delta B(s)\|_{L_2(\R^d)}^2
        +\left\|\delta C(s) \right\|_{L_2(\R^d)}^2\ds\bigg)^k\Bigg].
    \end{align*}
    and the last term is of order $O(\epsilon^k)$, due to the boundedness of the coefficients and since the $\delta$-terms vanish outside $E_\epsilon$. Thus, Gronwall gives \eqref{eq:FirstVariationalEstimate}.\\
    Similarly, towards \eqref{eq:SecondVariationalEstimate}, using Ito's formula, 
    BDG and Cauchy-Schwarz, 
    Young and the assumptions (boundedness) of the coefficients and then Fubini, we arrive at
    \begin{align*}
        &\E\left[\sup_{t\in [0,T]}\|Z_t\|^{2k}\right]\\
        \leq & c\E\left[\int_0^T\sup_{s\in[0,t]}\|Z_s\|^{2k} \dt\right]+\frac{1}{2}\E\left[\sup_{t\in[0,T]} \|Z_t\|^{2k}\right]\\
        &+\E\left[\int_0^T\|Y_s\|^{4k}\ds\right]\\
        &+\E\left[\sup_{s\in[0,T]}\|Y_s\|^{2k}\left(\int_{E_\epsilon} \|\delta A_x(s) \|^2\ds\right)^k\right]
        +\E\left[\tilde{\E}\left[\sup_{s\in[0,T]}\|\tilde{Y}_s\|^{2k}\left(\int_{E_\epsilon} \|\delta A_\mu(s)(\tilde{X}_s) \|^2\ds\right)^k\right]\right]\\
        &+\E\left[\sup_{s\in[0,T]}\|Y_s\|^{2k}\left(\int_{E_\epsilon} \|\delta B_x(s) \|^2\ds\right)^k\right]
        +\E\left[\tilde{\E}\left[\sup_{s\in[0,T]}\|\tilde{Y}_s\|^{2k}\left(\int_{E_\epsilon} \|\delta B_\mu(s)(\tilde{X}_s) \|^2\ds\right)^k\right]\right]\\
        &+\E\left[\sup_{s\in[0,T]}\|Y_s\|^{2k}\left(\int_{E_\epsilon} \|\delta C_x(s) \|^2\ds\right)^k\right]
        +\E\left[\tilde{\E}\left[\sup_{s\in[0,T]}\|\tilde{Y}_s\|^{2k}\left(\int_{E_\epsilon} \|\delta C_\mu(s)(\tilde{X}_s) \|^2\ds\right)^k\right]\right],
    \end{align*}
    so Gronwall gives \eqref{eq:SecondVariationalEstimate}, as the last seven expectations are of order $O(\epsilon^{2k)}$.\\
    Finally, towards \eqref{eq:DeltaX minus first Variational Estimate}, we denote $K=X^\epsilon-X-Y$, $X^{\lambda, \epsilon}:=X+\lambda\left(X^\epsilon-X\right)$ and $\theta^{\lambda,\epsilon}=(t,X^{\lambda,\epsilon},\cL(X^{\lambda,\epsilon}\mid \cF^0_t),\alpha^\epsilon)$. 
    We get
    \begin{align*}
        dK_t=
        & A(t,X^\epsilon_t,\mu^\epsilon_t,\alpha^\epsilon_t)-A(t,X_t,\mu_t,\alpha^\epsilon_t)-A_x(\theta_t)Y_t -\tilde{\E}\left[A_\mu(\theta_t)(\tilde{X}_t) \tilde{Y}_t\right]  \dt\\
        &+B(t,X^\epsilon_t,\mu^\epsilon_t,\alpha^\epsilon_t)-B(t,X_t,\mu_t,\alpha^\epsilon_t)-B_x(\theta_t)Y_t -\tilde{\E}\left[B_\mu(\theta_t)(\tilde{X}_t) \tilde{Y}_t\right]  \dW^1_t\\
        &+C(t,X^\epsilon_t,\mu^\epsilon_t,\alpha^\epsilon_t)-C(t,X_t,\mu_t,\alpha^\epsilon_t)-C_x(\theta_t)Y_t -\tilde{\E}\left[C_\mu(\theta_t)(\tilde{X}_t) \tilde{Y}_t\right]  \dW^0_t\\
        =
        & \int_0^1 A_x(\theta^{\lambda,\epsilon}_t)(X^\epsilon_t-X_t)+\tilde{\E}\left[A_\mu(\theta^{\lambda,\epsilon}_t)(\tilde{X}^{\lambda,\epsilon}_t)(\tilde{X}^\epsilon_t-\tilde{X}_t)\right] \dlambda\\
        &-A_x(\theta_t)(X^\epsilon_t-X_t) -\tilde{\E}\left[A_\mu(\theta_t)(\tilde{X}_t) (\tilde{X}^\epsilon_t-\tilde{X}_t)\right]\\
        &+A_x(\theta_t)K_t +\tilde{\E}\left[A_\mu(\theta_t)(\tilde{X}_t) \tilde{K}_t\right]  dt\\
        &+\int_0^1 B_x(\theta^{\lambda,\epsilon}_t)(X^\epsilon_t-X_t)+\tilde{\E}\left[B_\mu(\theta^{\lambda,\epsilon}_t)(\tilde{X}^{\lambda,\epsilon}_t)(\tilde{X}^\epsilon_t-\tilde{X}_t)\right] \dlambda\\
        &-B_x(\theta_t)(X^\epsilon_t-X_t) -\tilde{\E}\left[B_\mu(\theta_t)(\tilde{X}_t) (\tilde{X}^\epsilon_t-\tilde{X}_t)\right]\\
        &+B_x(\theta_t)K_t +\tilde{\E}\left[B_\mu(\theta_t)(\tilde{X}_t) \tilde{K}_t\right]  \dW^1_t\\
        &+\int_0^1 C_x(\theta^{\lambda,\epsilon}_t)(X^\epsilon_t-X_t)+\tilde{\E}\left[C_\mu(\theta^{\lambda,\epsilon}_t)(\tilde{X}^{\lambda,\epsilon}_t)(\tilde{X}^\epsilon_t-\tilde{X}_t)\right] \dlambda\\
        &-C_x(\theta_t)(X^\epsilon_t-X_t) -\tilde{\E}\left[C_\mu(\theta_t)(\tilde{X}_t) (\tilde{X}^\epsilon_t-\tilde{X}_t)\right]\\
        &+C_x(\theta_t)K_t +\tilde{\E}\left[C_\mu(\theta_t)(\tilde{X}_t) \tilde{K}_t\right]  \dW^0_t
    \end{align*}
    Using Ito's formula gives
    \begin{align*}
        &\E\left[\sup_{t\in [0,T]}\|K_s\|^{2k}\right]\\
        &\leq  \E\Bigg[\bigg(\sup_{t\in[0,T]}\int_0^t 2\left\langle A_x(\theta_s)K_s +\tilde{\E}\left[A_\mu(\theta_s)(\tilde{X}_s) \tilde{K}_s\right] , K_s\right\rangle \\
        &+ \left\| B_x(\theta_s)K_s +\tilde{\E}\left[B_\mu(\theta_s)(\tilde{X}_s) \tilde{K}_s\right] \right\|_{L_2(\R^d)}^2
        + \left\| C_x(\theta_s)K_s +\tilde{\E}\left[C_\mu(\theta_s)(\tilde{X}_s) \tilde{K}_s\right] \right\|_{L_2(\R^d)}^2 \ds\\
        &+2\sup_{t\in[0,T]} \int_0^t \langle K_s, B_x(\theta_s)K_s +\tilde{\E}\left[B_\mu(\theta_s)(\tilde{X}_s) \tilde{K}_s\right]  dW^1_s\rangle\\
        &+2\sup_{t\in[0,T]} \int_0^t \langle K_s, C_x(\theta_s)K_s +\tilde{\E}\left[C_\mu(\theta_s)(\tilde{X}_s) \tilde{K}_s\right]  dW 0_s\rangle\\
        &+\sup_{t\in[0,T]}\int_0^t 2\langle \int_0^1 \left(A_x(\theta^{\lambda,\epsilon}_s)-A_x(\theta_s)\right)(X^\epsilon_s-X_s)\\
        &+\tilde{\E}\left[\left(A_\mu(\theta^{\lambda,\epsilon}_s)(\tilde{X}^{\lambda,\epsilon}_s)-A_\mu(\theta_s)(\tilde{X}_s)\right)(\tilde{X}^\epsilon_s-\tilde{X}_s)\right] \dlambda, K_s\rangle\\
        &+ \Big\|\int_0^1 \left(B_x(\theta^{\lambda,\epsilon}_s)-B_x(\theta_s)\right)(X^\epsilon_s-X_s)
        +\tilde{\E}\left[\left(B_\mu(\theta^{\lambda,\epsilon}_s)(\tilde{X}^{\lambda,\epsilon}_s)-B_\mu(\theta_s)(\tilde{X}_s)\right)(\tilde{X}^\epsilon_s-\tilde{X}_s)\right] \dlambda \Big\|^2_{L_2(\R^d)}\\
        &+ \Big\|\int_0^1 \left(C_x(\theta^{\lambda,\epsilon}_s)-C_x(\theta_s)\right)(X^\epsilon_s-X_s)
        +\tilde{\E}\left[\left(C_\mu(\theta^{\lambda,\epsilon}_s)(\tilde{X}^{\lambda,\epsilon}_s)-C_\mu(\theta_s)(\tilde{X}_s)\right)(\tilde{X}^\epsilon_s-\tilde{X}_s)\right] \dlambda \Big\|^2_{L_2(\R^d)} \ds\\
        &+2\sup_{t\in[0,T]}\Big( \int_0^t \langle K_s, \int_0^1 (B_x(\theta^{\lambda,\epsilon}_s)-B_x(\theta_s))(X^\epsilon_s-X_s)
        +\tilde{\E}[(B_\mu(\theta^{\lambda,\epsilon}_s)(\tilde{X}^{\lambda,\epsilon}_s)-B_\mu(\theta_s)(\tilde{X}_s))(\tilde{X}^\epsilon_s-\tilde{X}_s)] \dlambda  dW^1_s\rangle \\
        &+ \int_0^t \langle K_s, \int_0^1 (C_x(\theta^{\lambda,\epsilon}_s)-C_x(\theta_s))(X^\epsilon_s-X_s)
        +\tilde{\E}[(C_\mu(\theta^{\lambda,\epsilon}_s)(\tilde{X}^{\lambda,\epsilon}_s)-C_\mu(\theta_s)(\tilde{X}_s))(\tilde{X}^\epsilon_s-\tilde{X}_s)] \dlambda  dW^0_s\rangle \Big)\bigg)^k \Bigg],
    \end{align*}
    where the first four lines can be estimated by Cauchy-Schwarz, Young and the boundedness assumptions on the coefficients, and the rest can be estimated using BDG/Cauchy-Schwarz, then Young to separate $K$ and then the Lipschitz-continuity of the coefficients and the already established order of $X^\epsilon-X$. Then Gronwall, gives \eqref{eq:DeltaX minus first Variational Estimate}.
\end{proof}
The most important estimate is the following.
\begin{lemma}\label{lemma:estimate on DeltaX-Y-Z}
    If Assumption \ref{Assumption:C^{2,1}} holds, then for any $1 \leq k$,
    \begin{equation*}
        \mathbb{E}\left[\sup _{t \in[0, T]}\left\|X^{\epsilon}_t-X_t-Y^{\epsilon}_t-Z^{\epsilon}_t\right\|^{2 k}\right] \in o(\epsilon^{2k}).
    \end{equation*}
\end{lemma}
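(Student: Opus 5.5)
Set $\eta_t := X^\epsilon_t - X_t - Y_t - Z_t$. The plan is to write a linear conditional McKean--Vlasov SDE for $\eta$, with $\eta_0=0$, of the form
\begin{equation*}
\mathrm{d}\eta_t = \Big(A_x(\theta_t)\eta_t + \tilde{\E}\big[A_\mu(\theta_t)(\tilde X_t)\tilde\eta_t\big] + R^A_t\Big)\dt + \big(\cdots + R^B_t\big)\dW^1_t + \big(\cdots + R^C_t\big)\dW^0_t .
\end{equation*}
Then the same Itô/BDG/Young/Gronwall argument as in the proof of Lemma \ref{lemma:Estimates on Y and Z} gives
\begin{equation*}
\E\Big[\sup_t\|\eta_t\|^{2k}\Big] \le c\,\E\Big[\Big(\int_0^T \|R^A_s\|\ds\Big)^{2k} + \Big(\int_0^T\|R^B_s\|^2+\|R^C_s\|^2\ds\Big)^{k}\Big].
\end{equation*}
It therefore suffices to show that this right-hand side is $o(\epsilon^{2k})$.

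To identify $R^A$ (and analogously $R^B,R^C$), split
\begin{equation*}
A(t,X^\epsilon_t,\mu^\epsilon_t,\alpha^\epsilon_t)-A(\theta_t) = \big[A(t,X^\epsilon,\mu^\epsilon,\alpha^\epsilon)-A(t,X,\mu,\alpha^\epsilon)\big] + \delta A(t).
\end{equation*}
Apply the second-order Taylor formula in $(x,\mu)$ from Appendix \ref{appendix:Taylor formula for x and mu} to the bracket around $(X_t,\mu_t)$, with control $\alpha^\epsilon_t$ and increment $\Delta X_t$. This produces first-order terms $A_x\Delta X+\tilde\E[A_\mu\Delta\tilde X]$ and second-order terms $\frac12A_{xx}[\Delta X,\Delta X]$, $\frac12\tilde\E[A_{y\mu}[\Delta\tilde X,\Delta\tilde X]]$, $\frac12\tilde\E\tilde{\tilde\E}[A_{\mu\mu}[\Delta\tilde X,\Delta\tilde{\tilde X}]]$ and $\tilde\E[A_{x\mu}[\Delta\tilde X,\Delta X]]$, all evaluated at $(t,X_t,\mu_t,\alpha^\epsilon_t)$. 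The remainder is bounded by $c\big(\|\Delta X_t\|^3+\tilde\E[\|\Delta\tilde X_t\|^2]^{3/2}+\dots\big)$; alternatively, using uniform Lipschitzness of the second derivatives, by $c(\|\Delta X_t\|+\tilde\E[\|\Delta \tilde X_t\|^2]^{1/2})^3$.

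Subtracting the equations for $Y$ and $Z$ leaves the following contributions to $R^A$:
\begin{enumerate}[(a)]
\item The Taylor remainder. By \eqref{eq:DeltaXEstimate} it is of order $O(\epsilon^{3/2})$ in the relevant $L^{2k}$ sense, hence $o(\epsilon)$.
\item Replacing $\Delta X$ by $Y$ in the second-order terms. Since these are bilinear with bounded coefficients, this produces terms like $A_{xx}[\Delta X - Y,\Delta X]$. By \eqref{eq:DeltaXEstimate} and \eqref{eq:DeltaX minus first Variational Estimate} they are $O(\epsilon\cdot\epsilon^{1/2})$.
\item The difference between the second-order coefficients at $\alpha^\epsilon_t$ and at $\alpha_t$. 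This vanishes off $E_\epsilon$ and is bounded times $\|\Delta X\|^2$, so its time integral is $O(\epsilon\cdot\epsilon)$.
\item Replacing $\Delta X$ by $Y+Z$ in the first-order terms at $\alpha^\epsilon$. This yields the linear part in $\eta$, plus $\delta A_x(t)(\Delta X_t - Y_t)$ and its measure analogue. These are supported on $E_\epsilon$, so in $L^{2k}$ they are $O(\epsilon\cdot\epsilon)$ after integrating in time. The $\delta A_x(t)Y_t$ terms are exactly those included in the $Z$ equation.
\end{enumerate}
For the conditional expectations $\tilde\E$ and $\tilde\E\tilde{\tilde\E}$, I will use Jensen's inequality for $\tilde\E$ and Fubini to reduce to the unconditional moments of Lemma \ref{lemma:Estimates on Y and Z}. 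Since $\tilde\E$ is a conditional expectation given $\cF^0$, $\E[(\tilde\E[\,\cdot\,])^{p}]\le\E[|\cdot|^p]$.

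The main obstacle is the time-integrated bookkeeping: the $R$ terms that are only $O(\epsilon)$ pointwise must be supported on $E_\epsilon$, which gains an extra factor $\epsilon$ from $|E_\epsilon|$, while the terms living on all of $[0,T]$ must be genuinely $o(\epsilon)$ pointwise. The terms in (a) and (b) are the ones on all of $[0,T]$, and they give $\epsilon^{3/2}$, which suffices. For the stochastic integrals the $L^{2k}$ requirement becomes $\E[(\int\|R^B\|^2)^k]=o(\epsilon^{2k})$. On $E_\epsilon$, $\|\delta B_x\|\,\|\Delta X-Y\|$ gives $\epsilon^{k}\cdot\epsilon^{2k}$, which is fine. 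The global terms of size $\epsilon^{3/2}$ give $\epsilon^{3k}$, also fine. I would also have to check that the Taylor expansion of Appendix \ref{appendix:Taylor formula for x and mu} holds in the conditional setting, applied $\omega^0$-wise with the lifted copies on $\tilde\Omega$. I expect this to be the delicate technical point, handled by working fiberwise in $\omega^0$ with the product-space construction.
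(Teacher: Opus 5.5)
Your argument is correct, but it decomposes the error differently from the paper.

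The paper does not use the Taylor formula of Appendix \ref{appendix:Taylor formula for x and mu} here. It writes the drift of $K=\eta$ as an exact mean-value integral along $X^{\lambda,\epsilon}=X+\lambda\Delta X$ (with control $\alpha^\epsilon$). It then expands $A_x(\theta^{\lambda,\epsilon}_t)-A_x(\theta_t)$ and $A_\mu(\theta^{\lambda,\epsilon}_t)(\tilde X^{\lambda,\epsilon}_t)-A_\mu(\theta_t)(\tilde X_t)$ a second time along $\theta^{\lambda\gamma,\epsilon}_t$, and substitutes $\Delta X=K+Y+Z$ everywhere. As a result, $K$ appears in the Gronwall part (with coefficients at the interpolated points $\theta^{\lambda,\epsilon}_t$) and also inside second-order terms paired with $Y+Z$. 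The latter are controlled by the a priori bound \eqref{eq:Apriori Estimate on K}. What remains are $\delta A_x(t)Z_t$-type terms on $E_\epsilon$ and differences of second derivatives at $\theta^{\lambda\gamma,\epsilon}_t$ versus $\theta_t$ acting on $[Y,Y]$.

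You instead expand once, around the base point $(X_t,\mu_t)$ at control $\alpha^\epsilon_t$, with the full increment $\Delta X_t$, via Lemma \ref{lemma:Taylor formula for x and mu dependence}. You then replace $\Delta X$ by $Y$ only in the quadratic terms, using \eqref{eq:DeltaX minus first Variational Estimate}. This has two advantages:
\begin{itemize}
\item $\eta$ enters only through the same linear operator (coefficients at $\theta_t$) as $Y$ and $Z$. So you need neither the a priori bound on $K$ nor the mixed terms in which $K$ multiplies $Y+Z$.
\item The control-switch contributions (your (c), (d), supported on $E_\epsilon$) are separated cleanly from the state-increment contributions ((a), (b)). The paper's direct comparison of $A_{xx}(\theta^{\lambda\gamma,\epsilon}_t)$ with $A_{xx}(\theta_t)$ leaves this separation implicit, although the coefficients are only measurable in $u$.
\end{itemize}
The paper's route, in turn, is self-contained and does not rely on a packaged remainder estimate. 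Both arguments in fact yield $O(\epsilon^{3k})$.

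One correction. Your ``alternative'' remainder bound $c\big(\|\Delta X_t\|+\tilde{\E}[\|\Delta\tilde X_t\|^2]^{1/2}\big)^3$ is false in general. The part of the remainder coming from $A_{y\mu}$ genuinely contains $\tilde{\E}[\|\Delta\tilde X_t\|^3]$. This cannot be bounded by $\tilde{\E}[\|\Delta\tilde X_t\|^2]^{3/2}$; already $\varphi(\mu)=\int h\,\mathrm{d}\mu$ with an increment that is rarely large gives a counterexample. Use the bound of Lemma \ref{lemma:Taylor formula for x and mu dependence} instead, namely $C(\|\Delta X_t\|^3+\tilde{\E}[\|\Delta\tilde X_t\|^3])$. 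By Jensen, $\E[(\tilde{\E}[\|\Delta\tilde X_t\|^3])^{2k}]\le\E[\|\Delta X_t\|^{6k}]\in O(\epsilon^{3k})$, so (a) goes through unchanged.

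The fiberwise application in $\omega^0$ that you flag as delicate is routine on the product space. Freeze $\alpha^\epsilon_t(\omega)$ pointwise; all constants are uniform in $u$. The paper uses the expansion the same way in Proposition \ref{proposition:expansion of cost functional}.
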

\begin{proof}
    We write $K_t:=X^{\epsilon}_t-X_t-Y_t-Z_t$. 
    Clearly, by our previous estimates from Lemma \ref{lemma:Estimates on Y and Z}
    \begin{equation}
        \E\left[\sup_{t\in[0,T]}\|K_t\|^{2k}\right]\in O(\epsilon^{2k}).\label{eq:Apriori Estimate on K}
    \end{equation}
    Defining
    \begin{equation}\begin{aligned}
        K^{(1)}_t
        =& A(t,X^\epsilon_t,\mu^\epsilon_t,\alpha^\epsilon_t)
        -A(t,X_t,\mu_t,\alpha^\epsilon_t)
        - A_x(\theta_t)[Y_t+Z_t] 
        -\tilde{\E}\left[A_\mu(\theta_t)(\tilde{X}_t) [\tilde{Y}_t+\tilde{Z}_t]\right] 
        \\
        &- \frac{1}{2} A_{xx}(\theta_t)[Y_t,Y_t] 
        - \frac{1}{2}\tilde{\E}\left[ A_{y\mu}(\theta_t)(\tilde{X}_t)[\tilde{Y}_t,\tilde{Y}_t]\right]  \\
        & - \frac{1}{2}\tilde{\E} \left[\tilde{\tilde\E}\left[ A_{\mu\mu}(\theta_t)(\tilde{X}_t,\tilde{\tilde{X}}_t)[\tilde{Y}_t,\tilde{\tilde{Y}}_t]\right]\right] 
        -\tilde{\E}\left[ A_{x\mu}(\theta_t)(\tilde{X}_t)[\tilde{Y}_t,Y_t]\right]\\
        &- \delta A_x(t) Y_t
        -\tilde{\E}\left[\delta A_\mu(t)(\tilde{X}_t) \tilde{Y}_t\right]  
    \end{aligned}\label{eq:Drift of K}\end{equation}
    and $K^{(2)}, K^{(3)}$ the corresponding terms for $A$ replaced by $B$ and $C$ respectively, we can write
    \begin{equation*}
        \mathrm{d}K_t= K^{(1)}_t \dt + K^{(2)}_t \dW^1_t+ K^{(3)}_t \dW^0_t.
    \end{equation*}
    Again, using the notation $X^{\lambda, \epsilon}:=X+\lambda\left(X^\epsilon-X\right)$ and $\theta^{\lambda,\epsilon}=(t,X^{\lambda,\epsilon},\cL(X^{\lambda,\epsilon}\mid \cF^0_t),\alpha^\epsilon)$, we notice that
    \begin{equation}\begin{aligned}
        &A(t,X^\epsilon_t,\mu^\epsilon_t,\alpha^\epsilon_t)
        -A(t,X_t,\mu_t,\alpha^\epsilon_t)
        - A_x(\theta_t)[Y_t+Z_t] 
        -\tilde{\E}\left[A_\mu(\theta_t)(\tilde{X}_t) [\tilde{Y}_t+\tilde{Z}_t]\right]\\
        =&\int_0^1 A_x(\theta^{\lambda,\epsilon}_t)[K_t]
        +\tilde{\E}[A_\mu(\theta^{\lambda,\epsilon}_t)(\tilde{X}^{\lambda,\epsilon}_t)[\tilde{K}_t]]\\
        &+(A_x(\theta^{\lambda,\epsilon}_t)-A_x(\theta_t))[Y_t+Z_t]
        +\tilde{\E}[(A_\mu(\theta^{\lambda,\epsilon}_t)(\tilde{X}^{\lambda,\epsilon}_t)-A_\mu(\theta_t)(\tilde{X}_t))[\tilde{Y}_t+\tilde{Z}_t]]\dlambda.
    \end{aligned}\label{eq:first expansion of drift of K}\end{equation}
    Expanding further, we get
    \begin{equation}\begin{aligned}
        A_x(\theta^{\lambda,\epsilon}_t)-A_x(\theta_t)
        =& \lambda \int_0^1 A_{xx}(\theta^{\lambda\gamma,\epsilon}_t)[K_t]
        +\tilde{\E}\left[A_{x\mu}(\theta^{\lambda\gamma,\epsilon}_t)(\tilde{X}^{\lambda\gamma,\epsilon}_t)[\tilde{K}_t]\right]\dgamma +\delta A_x(t)\\
        &+\lambda \int_0^1 A_{xx}(\theta^{\lambda\gamma,\epsilon}_t)[Y_t+Z_t]
        +\tilde{\E}\left[A_{x\mu}(\theta^{\lambda\gamma,\epsilon}_t)(\tilde{X}^{\lambda\gamma,\epsilon}_t)[\tilde{Y}_t+\tilde{Z}_t]\right]\dgamma,
    \end{aligned}\label{eq:second expansion of drift of K in x}\end{equation}
    and
    \begin{equation}\begin{aligned}
        &A_\mu(\theta^{\lambda,\epsilon}_t)(\tilde{X}^{\lambda,\epsilon}_t)-A_\mu(\theta_t)(\tilde{X}_t)\\
        =& \lambda \int_0^1 \tilde{\tilde{\E}}\left[A_{\mu\mu}(\theta^{\lambda\gamma,\epsilon}_t)(\tilde{X}^{\lambda\gamma,\epsilon}_t,\tilde{\tilde{X}}^{\lambda\gamma,\epsilon}_t)[\tilde{\tilde{K}}_t]\right]
        +A_{x\mu}(\theta^{\lambda\gamma,\epsilon}_t)(\tilde{X}^{\lambda\gamma,\epsilon}_t)[K_t]
        +A_{y\mu}(\theta^{\lambda\gamma,\epsilon}_t)(\tilde{X}^{\lambda\gamma,\epsilon}_t)[\tilde{K}_t] \\
        &+\tilde{\tilde{\E}}\left[A_{\mu\mu}(\theta^{\lambda\gamma,\epsilon}_t)(\tilde{X}^{\lambda\gamma,\epsilon}_t,\tilde{\tilde{X}}^{\lambda\gamma,\epsilon}_t)[\tilde{\tilde{Y}}_t+\tilde{\tilde{Z}}_t]\right]
        +A_{x\mu}(\theta^{\lambda\gamma,\epsilon}_t)(\tilde{X}^{\lambda\gamma,\epsilon}_t)[Y_t+Z_t]\\
        &+A_{y\mu}(\theta^{\lambda\gamma,\epsilon}_t)(\tilde{X}^{\lambda\gamma,\epsilon}_t)[\tilde{Y}_t+\tilde{Z}_t]\dgamma 
        +\delta A_\mu(t)(\tilde{X}_t).
    \end{aligned}\label{eq:second expansion of drift of K in mu}\end{equation}
    Plugging \eqref{eq:second expansion of drift of K in x} and \eqref{eq:second expansion of drift of K in mu} into \eqref{eq:first expansion of drift of K} and the resulting terms into \eqref{eq:Drift of K} and using Schwarz' theorem (cf. \cite{CarmonaDelarue2018II} Remark 4.16), results in
    \begin{align}
        &K^{(1)}_t\notag\\
        =&\int_0^1 A_x(\theta^{\lambda,\epsilon}_t)[K_t]
        +\tilde{\E}[A_\mu(\theta^{\lambda,\epsilon}_t)(\tilde{X}^{\lambda,\epsilon}_t)[\tilde{K}_t]]\dlambda \label{eq:K:Gronwall terms}\\
        &+ \delta A_x(t) Z_t
        +\tilde{\E}\left[\delta A_\mu(t)(\tilde{X}_t) \tilde{Z}_t\right]\label{eq:K:delta terms}\\
        &+\int_0^1\lambda\int_0^1 A_{xx}(\theta^{\lambda\gamma,\epsilon}_t)[K_t,Y_t+Z_t]
        +\tilde{\E}\bigg[\tilde{\tilde{\E}}\left[A_{\mu\mu}(\theta^{\lambda\gamma,\epsilon}_t)(\tilde{X}^{\lambda\gamma,\epsilon}_t,\tilde{\tilde{X}}^{\lambda\gamma,\epsilon}_t)[\tilde{\tilde{K}}_t,\tilde{Y}_t+\tilde{Z}_t]\right]\label{eq:K:Higher order terms 1}\\
        &+2A_{x\mu}(\theta^{\lambda\gamma,\epsilon}_t)(\tilde{X}^{\lambda\gamma,\epsilon}_t)[\tilde{K}_t,Y_t+Z_t]
        +A_{y\mu}(\theta^{\lambda\gamma,\epsilon}_t)(\tilde{X}^{\lambda\gamma,\epsilon}_t)[\tilde{K}_t,\tilde{Y}_t+\tilde{Z}_t]\bigg]\dgamma\dlambda \label{eq:K:Higher order terms 2}\\
        &+\int_0^1\lambda\int_0^1  A_{xx}(\theta^{\lambda\gamma,\epsilon}_t)[Y_t+Z_t,Y_t+Z_t]
        -A_{xx}(\theta_t)[Y_t,Y_t]\label{eq:K:Y^2 terms 1}\\
        &+2\tilde{\E}\left[A_{x\mu}(\theta^{\lambda\gamma,\epsilon}_t)(\tilde{X}^{\lambda\gamma,\epsilon}_t)[\tilde{Y}_t+\tilde{Z}_t,Y_t+Z_t]
        -A_{x\mu}(\theta_t)(\tilde{X}_t)[\tilde{Y}_t,Y_t]\right]\label{eq:K:Y^2 terms 2}\\
        &+\tilde{\E}\Big[\tilde{\tilde{\E}}\big[
        A_{\mu\mu}(\theta^{\lambda\gamma,\epsilon}_t)(\tilde{X}^{\lambda\gamma,\epsilon}_t,\tilde{\tilde{X}}^{\lambda\gamma,\epsilon}_t)[\tilde{\tilde{Y}}_t+\tilde{\tilde{Z}}_t,\tilde{Y}_t+\tilde{Z}_t]
        -A_{\mu\mu}(\theta_t)(\tilde{X}_t,\tilde{\tilde{X}}_t)[\tilde{Y}_t,\tilde{\tilde{Y}}_t]\big]\Big]\label{eq:K:Y^2 terms 3}\\
        &+\tilde{\E}\big[ A_{y\mu}(\theta^{\lambda\gamma,\epsilon}_t)(\tilde{X}^{\lambda\gamma,\epsilon}_t)[\tilde{Y}_t+\tilde{Z}_t,\tilde{Y}_t+\tilde{Z}_t]
        -A_{y\mu}(\theta_t)(\tilde{X}_t)[\tilde{Y}_t,\tilde{Y}_t]\big]\dgamma\dlambda \label{eq:K:Y^2 terms 4}.
    \end{align}
    For $K^{(2)}$ and $K^{(3)}$ the corresponding formulas with $A$ replaced by $B$ and $C$ respectively hold. Now, $K$ can be estimated.
    Itô's formula gives
    \begin{align*}
        \mathbb{E}\left[\sup _{t \in[0, T]}\left\|K_t\right\|^{2 k}\right]
        = &\E\Bigg[\bigg(\sup _{t \in[0, T]}\int_0^t 2\langle K_s,K^{(1)}_s\rangle + \|K^{(2)}_s\|^2+\|K^{(3)}_s\|^2 \ds\\
        &+\sup _{t \in[0, T]}\int_0^t \langle K_s,K^{(2)}_s \dW^1\rangle 
        +\sup _{t \in[0, T]}\int_0^t \langle K_s,K^{(3)}_s \dW^0\rangle \bigg)^k\Bigg]
    \end{align*}
    After using the Burkholder-Davis-Gundy and standard arguments, one can treat the terms in the following way. As terms containing $K^{(1)}$ are treated in the same way as the respective terms for $K^{(2)}$ and $K^{(3)}$, we only argue for $K^{(1)}$. The terms containing \eqref{eq:K:Gronwall terms} are treated by a Gronwall argument. The terms containing \eqref{eq:K:delta terms} are of high enough order by Lemma \ref{lemma:Estimates on Y and Z}. The terms containing \eqref{eq:K:Higher order terms 1} and \eqref{eq:K:Higher order terms 2} are of high enough order by \eqref{eq:Apriori Estimate on K} and Lemma \ref{lemma:Estimates on Y and Z}. For \eqref{eq:K:Y^2 terms 1}, the terms containing $Z$ are again of higher order by Lemma \ref{lemma:Estimates on Y and Z}, while for the terms containing $[Y,Y]$ the difference gives the right order by the Lipschitz assumption on $A$ and $B$ from Assumption \ref{Assumption:C^{2,1}} and the estimate \eqref{eq:DeltaXEstimate}. Terms containing \eqref{eq:K:Y^2 terms 2}, \eqref{eq:K:Y^2 terms 3} and \eqref{eq:K:Y^2 terms 4} are argued in the same way.
\end{proof}

\section{The Adjoint Equations}\label{sec:The Adjoint Equations}
The first-order adjoint process $(p,q,r)$ is defined as a solution to
\begin{equation}
    \begin{aligned}
        \mathrm{d}p_t 
        =& - \Big(A^*_x(\theta_t) p_t + B^*_x(\theta_t) q_t+C^*_x(\theta_t) r_t +f^*_x(\theta_t)\\
        &+ \tilde{\E}\left[ A^*_\mu(\tilde{\theta}_t)(X_t) \tilde{p}_t 
        + B^*_\mu(\tilde{\theta}_t)(X_t) \tilde{q}_t
        + C^*_\mu(\tilde{\theta}_t)(X_t) \tilde{r}_t
        +f^*_\mu(\tilde{\theta}_t)(X_t)\right]\Big) \dt  \\
        &\quad + q_t \dW^1_t+ r_t \dW^0_t, \\
        p_T &= g_x(X_T,\mu_T) + \tilde{\E}[g_\mu(\tilde{X}_T,\mu_T)(X_T)]. 
    \end{aligned}\label{eq:First order adjoint}
\end{equation}
Now, let $(e_n)_{n=1}^d$ be a basis of $\R^d$. Note that $B_x(\theta_t):\R^d\to \R^{d\times d}$ linearly, i.e. $B_x(\theta_t):\R^d\times \R^d\to \R^d, (y,w) \mapsto B_x(\theta_t)[y,w]$ bilinearly, implying $B_x(\theta_t)[\cdot,w]\in \R^{d\times d}$. Denote $B_x^k(t):=B_x(\theta_t)[\cdot,e_k]$ and $Q^k_t:= Q_t e_k \in \R^{d\times d}$. The first-level second-order adjoint process $(P,Q,R)$ is defined as a solution to
\begin{equation}
    \begin{aligned}
        \mathrm{d}P_t =& -\Big(A^*_x(\theta_t) P_t +P_t A_x(\theta_t)
        +\sum_{k}B^{k,*}_x(\theta_t) P_tB^k_x(\theta_t) + \sum_{k}B^{k,*}_x(\theta_t) Q^k_t + \sum_{k}Q^k_t B^k_x(\theta_t)\\
        &+\sum_{k}C^{k,*}_x(\theta_t) P_tC^k_x(\theta_t) + \sum_{k}C^{k,*}_x(\theta_t) R^k_t + \sum_{k}R^k_t C^k_x(\theta_t)\\
        &
        +H_{xx}(\theta_t)
        + \tilde{\E}[ H_{y\mu}(\tilde{\theta}_t)(X_t) ] \Big)\dt
        + Q_t \dW^1_t
        + R_t \dW^0_t, \\
        P_T =& g_{xx}(X_T,\mu_T) + \tilde{\E}[g_{y\mu}(\tilde{X}_T,\mu_T)( X_T)]
    \end{aligned}
    \label{eq:First-Level second-order adjoint equation}
\end{equation}
where 
\begin{equation}
    H(t,x, \mu, \alpha, p, q, r):=\langle A(t,x, \mu, \alpha), p\rangle + \langle B(t,x, \mu, \alpha) ,q\rangle +\langle C(t,x, \mu, \alpha),r\rangle +f(t,x, \mu, \alpha),
\end{equation}
and we expand our $\theta$-notation to $\theta_t=(t,X_t,\mu_t,\alpha_t,p_t,q_t,r_t)$ and remember the convention from the end of Section \ref{sec:Preliminaries} that only needed arguments are taken.
\begin{remark}\label{remark:after first two adjoints, existence, symmetry}
    \begin{enumerate}[(i)]
        \item \eqref{eq:First order adjoint} is a conditional McKean--Vlasov BSDE and existence and uniqueness of a solution are discussed in the Appendix \ref{appendix:Ex and Uni for Controlled MKVSDE with Common Noise} and follow by Theorem \ref{theorem_appendix:existence and uniqueness of Cond-MKVBSDE}.
        \item \eqref{eq:First-Level second-order adjoint equation} is a standard BSDE and has a unique adapted solution by already known results (cf. \cite{Pardoux2014} chapter 5).
        \item The law dependence in \eqref{eq:First order adjoint} and \eqref{eq:First-Level second-order adjoint equation} is again on the joint conditional laws $\cL(X_t,p_t\mid\cF^0_t)$/$\cL(X_t,P_t\mid\cF^0_t)$ and not on $\cL(p_t\mid\cF^0_t)$/$\cL(P_t\mid\cF^0_t)$ alone, respectively.
        \item Again, all coefficients are sufficiently measurable (cf. Remark \ref{remark:after def of variational eq}).
        \item $P$, $Q^k$ and $R^k$ take values in the symmetric matrices. For this notice that $H_{xx}$ and $g_{xx}$ are symmetric as Hessians and $H_{y\mu}$ and $g_{y\mu}$ are actually also symmetric (cf. \cite{CarmonaDelarue2018I} Corollary 5.89).
    \end{enumerate}
\end{remark}
\subsection{The Lifting and the Second-Level Second-Order Adjoint Equation} \label{subsec:Lifting and Third Adjoint}
    Now, we come to the third adjoint equation and its setting. This approach was first introduced by \citet{spille2026}. We will adapt the setting to fit the common noise case.\\
    So far, we have worked on $(\Omega,\cF,(\cF_t)_{t\geq 0 },\P):=\bigotimes_{i=0}^1 (\Omega^i,\mathcal{F}^i,(\cF^i_t)_{t\geq 0 },\mathbb{P}^i)$ and used a copy of $(\Omega^1,\mathcal{F}^1,(\cF^1_t)_{t\geq 0 },\mathbb{P}^1)$, called $(\tilde{\Omega},\tilde{\mathcal{F}},(\tilde{\cF}_t)_{t\geq 0 },\tilde{\mathbb{P}})$, to generate copies of random variables, such that the second variable could be integrated out, thereby introducing conditional distribution dependence (cf. beginning of Section \ref{sec:The Variational Equations}). Now, we want to linearize the interaction of two copies of $Y$, which are independent in the second variable. 
    To be able to do this, we need two independent variables that are not immediately integrated out. 
    Thus, we really need to expand our probability space.\\
    We denote $(\hat{\Omega}, \hat{\cF}, (\hat{\cF}_t)_{t\geq 0 }, \hat{\P}):=(\Omega^1, \cF^1, (\cF^1_t)_{t\geq 0 }, \P^1)$ a copy and consider the product space
    \begin{equation*}
        (\bar{\Omega}, \bar{\P})
        =(\Omega,  \P)\otimes  (\hat{\Omega},  \hat{\P}),
    \end{equation*}
    equipped with the completed versions $\bar{\cF}, (\bar{\cF_t})_{t\geq 0 }$ of the product $\sigma$-algebras $(\cF, (\cF_t)_{t\geq 0 })\otimes (\hat{\cF}, (\hat{\cF}_t)_{t\geq 0 })$. We will now canonically lift all our previously defined processes to the product space by defining (with an abuse of notation) for a random variable $\Psi:\Omega\to \R^m$
    \begin{equation*}
        \Psi(\bar{\omega})=\Psi(\bar{\omega}_0,\bar{\omega}_1,\bar{\omega}_2):=\Psi(\bar{\omega}_0,\bar{\omega}_1),\quad \text{and}\quad \hat{\Psi}(\bar{\omega})=\hat{\Psi}(\bar{\omega}_0,\bar{\omega}_1,\bar{\omega}_2):=\Psi(\bar{\omega}_0,\bar{\omega}_2)
    \end{equation*}
    and so forth. The resulting expectation $\bar{\E}$ then has the same effect on the lifting as the original has on the original processes, i.e. $\bar{\E}[\Psi_t]=\E[\Psi_t]$. This lifting allows us to easily obtain copies of our process, which are independent of the previous ones in the second component conditionally on the first,
    \begin{equation*}
        \hat{W}^1(\bar{\omega}):=W^1(\bar{\omega}_0,\bar{\omega}_2),\quad
        \hat{X}(\bar{\omega}):=X(\bar{\omega}_0,\bar{\omega}_2),\quad\dots
    \end{equation*}
    For this reason we will denote $\E[\hat{\E}[\varphi(X_t,\hat{X_t})]]:=\bar{\E}[\varphi(X_t,\hat{X}_t)]$.\\
    Consequently, we will also need to use a copy of $(\Omega^1,\mathbb{P}^1)\otimes (\Omega^1,\mathbb{P}^1)$ to be able to lift processes defined on $\Bar{\Omega}$. This will still be called $(\tilde{\Omega},\tilde{\mathbb{P}})$ and will be equipped with the completed versions of the product-$\sigma$-algebras, called $(\tilde{\mathcal{F}},(\tilde{\cF}_t)_{t\geq 0 })$. For $\Phi:\bar{\Omega}\to \R^m$, we define
    \begin{align*}
        &\tilde{\Phi}:\tilde{\Omega}\times \bar{\Omega}\to \R^{m},\quad (\tilde{\omega},\bar{\omega})=(\tilde{\omega}_1,\tilde{\omega}_2,\bar{\omega}_0,\bar{\omega}_1,\bar{\omega}_2)\mapsto 
        \Phi(\bar{\omega}_0,\tilde{\omega}_1,\bar{\omega}_2)
        \quad\text{and}\\
        &\tilde{\hat{\Phi}}:\tilde{\Omega}\times \bar{\Omega}\to \R^{m},\quad (\tilde{\omega},\bar{\omega})=(\tilde{\omega}_1,\tilde{\omega}_2,\bar{\omega}_0,\bar{\omega}_1,\bar{\omega}_2)\mapsto 
        \Phi(\bar{\omega}_0,\bar{\omega}_1,\tilde{\omega}_2)\quad \text{and}\\
        &\tilde{\bar{\Phi}}:\tilde{\Omega}\times \bar{\Omega}\to \R^{m},\quad (\tilde{\omega},\bar{\omega})=(\tilde{\omega}_1,\tilde{\omega}_2,\bar{\omega}_0,\bar{\omega}_1,\bar{\omega}_2)\mapsto 
        \Phi(\bar{\omega}_0,\tilde{\omega}_1,\tilde{\omega}_2),
    \end{align*}
    i.e. one or both variables of the product space might be integrated out.
    For random variables $\Psi:\Omega\to\R^m$ that are lifted to $\bar{\Omega}$, we use the abuse of notation that
    \begin{align*}
        &\tilde{\Psi}:\tilde{\Omega}\times \bar{\Omega}\to \R^{m},\quad (\tilde{\omega},\bar{\omega})=(\tilde{\omega}_1,\tilde{\omega}_2,\bar{\omega}_0,\bar{\omega}_1,\bar{\omega}_2)\mapsto 
        \Psi(\bar{\omega}_0,\tilde{\omega}_1)
        \quad\text{and}\\
        &\tilde{\hat{\Psi}}:\tilde{\Omega}\times \bar{\Omega}\to \R^{m},\quad (\tilde{\omega},\bar{\omega})=(\tilde{\omega}_1,\tilde{\omega}_2,\bar{\omega}_0,\bar{\omega}_1,\bar{\omega}_2)\mapsto 
        \Psi(\bar{\omega}_0,\tilde{\omega}_2).
    \end{align*}
    We will use that $\bar{\E}[\varphi(X_t,\hat{X}_t)]=\E[\tilde{\E}[\varphi(X_t,\tilde{X}_t)]]=\E[\tilde{\E}[\varphi(X_t,\tilde{\hat{X}}_t)]]$.\\
    We can now define the second-level second-order adjoint process $(\mathfrak{P},\mathfrak{Q}^{(1)},\mathfrak{Q}^{(2)},\mathfrak{R})$ as a solution to 
    \begin{equation}
        \begin{aligned}
            \mathrm{d}\mathfrak{P}_t
            =&-\bigg( 
            A^*_x(\theta_t)\mathfrak{P}_t
            +\tilde{\E}\left[A^*_\mu(\tilde{\theta}_t)(X_t)\tilde{\mathfrak{P}}_t\right]
            +\mathfrak{P}_t A_x(\hat{\theta}_t)
            +\tilde{\E}\left[\tilde{\hat{\mathfrak{P}}}_t A_\mu(\tilde{\theta}_t)(\hat{X}_t)\right]\\
            &+\tilde{\E} \left[H_{\mu\mu}(\tilde{\theta}_t)(X_t,\hat{X}_t)\right]
            + H_{x\mu}(\hat{\theta}_t)(X_t)
            + H^*_{x\mu}(\theta_t)(\hat{X}_t)
            + P _t A_\mu(\theta_t)(\hat{X}_t)
            + A^*_\mu(\hat{\theta}_t)(X_t)\hat{P}_t\\
            & + B^*_x(\theta_t) P_t B_\mu(\theta_t)(\hat{X}_t) 
            + \tilde{\E}[B^*_\mu(\tilde{\theta}_t)(X_t)\tilde{P}_t B_\mu(\tilde{\theta}_t)(\hat{X}_t)]
            + B^*_\mu(\hat{\theta}_t)(X_t)\hat{P}_t B_x(\hat{\theta}_t)\\
            & + C^*_x(\theta_t) P_t C_\mu(\theta_t)(\hat{X}_t) 
            + \tilde{\E}[C^*_\mu(\tilde{\theta}_t)(X_t)\tilde{P}_t C_\mu(\tilde{\theta}_t)(\hat{X}_t)]
            + C^*_\mu(\hat{\theta}_t)(X_t)\hat{P}_t C_x(\hat{\theta}_t) \\
            &+ \sum_{k=1}^d  \Big(C^ {k,*}_x(\theta_t)\mathfrak{P}_tC^k_x(\hat{\theta}_t)
            + \tilde{\E}[C^{k,*}_x(\theta_t)\tilde{\hat{\mathfrak{P}}}_t C^k_\mu(\tilde{\hat{\theta}}_t)( \hat{X}_t)
            +C^{k,*}_\mu(\tilde{\theta}_t)(X_t)\tilde{\mathfrak{P}}_t C^k_x(\hat{\theta}_t)]\\
            &+\tilde{\bar{\E}}[C^{k,*}_\mu(\tilde{\theta}_t)(X_t)\tilde{\bar{\mathfrak{P}}}_t C^k_\mu(\tilde{\hat{\theta}}_t)( \hat{X}_t)]\Big)\\
            &+ \sum_{k=1}^d \Big(
            Q^{k}_t B^k_\mu(\theta_t)(\hat{X}_t)
            +B^{k,*}_\mu(\hat{\theta}_t)(X_t)\hat{Q}^{k}_t
            +B^{k,*}_x(\theta_t)\mathfrak{Q}^{(1),k}_t\\
            &+\tilde{\E}[B^{k,*}_\mu(\tilde{\theta}_t)(X_t)\tilde{\mathfrak{Q}}^{(1),k}_t]
            +\mathfrak{Q}^{(2),k}_tB^k_x(\hat{\theta}_t)
            +\tilde{\E}[ \tilde{\hat{\mathfrak{Q}}}^{(2),k}_t B^k_\mu(\tilde{\theta}_t)(\hat{X}_t)]\Big)\\
            &+ \sum_{k=1}^d \Big(
            R^{k}_t C^k_\mu(\theta_t)(\hat{X}_t)
            +C^{k,*}_\mu(\hat{\theta}_t)(X_t)\hat{R}^{k}_t
            +C^{k,*}_x(\theta_t)\mathfrak{R}^{k}_t\\
            &+\tilde{\E}[C^{k,*}_\mu(\tilde{\theta}_t)(X_t)\tilde{\mathfrak{R}}^{k}_t]
            +\mathfrak{R}^{k}_t C^k_x(\hat{\theta}_t)
            +\tilde{\E}[ \tilde{\hat{\mathfrak{R}}}^{k}_t C^k_\mu(\tilde{\theta}_t)(\hat{X}_t)]\Big)\bigg)\dt\\
            &+ \mathfrak{Q}^{(1)}_t \dW^1_t
            +\mathfrak{Q}^{(2)}_t \,\mathrm{d}\hat{W}^1_t
            +\mathfrak{R}_t \dW^0_t,\\
            \mathfrak{P}_T=&\tilde{\E}[g_{\mu \mu}(\tilde{X}_T,\mu_T)( X_T,\hat{X}_T)] +g_{x\mu}(\hat{X}_T,\mu_T)(X_T)+g^*_{x\mu}(X_T,\mu_T)(\hat{X}_T),
        \end{aligned}\label{eq:Second-Level Second-Order Adjoint}
    \end{equation}
    \begin{remark}
        \begin{enumerate}[(i)]
            \item The above equation is of conditional McKean--Vlasov type, as the coefficients depend on the conditional law of the solution.
            \item Again, we note that the explicit dependence is on the joint law, i.e. on $\cL(\mathfrak{P}_t,X_t\mid \cF^0\vee \cF^1)$, $\cL(\mathfrak{P}_t,\hat{X}_t\mid \cF^0\vee \hat{\cF}^1)$ and $\cL(\mathfrak{P}_t,X_t,\hat{X}_t\mid \cF^0)$. The conditioning on $\cF^0$ comes from the fact that the original equation contains the conditional law $\cL(X_t\mid\cF^0_t)$, while the conditioning on $\cF^1$ is needed due to the structure of the Lions derivative and the interaction of $Y$ with $\hat{Y}$. This conditioning already appeared in the non-common noise setting (cf. \cite{spille2026}).
            \item The exact definition of a solution and its existence and uniqueness are discussed in Appendix \ref{appendix:Ex and Uni for Controlled MKVSDE with Common Noise}, with the final well-posedness result being Theorem \ref{theorem_appendix:existence and uniqueness of Cond-MKVBSDE}.
            \item The measurability of the coefficients was already discussed in Remark \ref{remark:after def of variational eq}.
        \end{enumerate}
    \end{remark}
    \begin{remark}
        We make some observations.
        \begin{enumerate}[(i)]
            \item If there is no $\mu$-dependence in the coefficients and the cost functions, then all affine terms and the terminal condition are equal to $0$, so the unique solution becomes $\mathfrak{P}\equiv 0$.\\
            Equally, if $C\equiv 0$, then the equation reduces to the one from \citet{spille2026}.
            \item $P$ takes values in the symmetric matrices (cf. Remark \ref{remark:after first two adjoints, existence, symmetry}). This makes sense, as $Y\otimes Y$ is also a symmetric matrix. On the contrary $Y\otimes \hat{Y}$ is only symmetric in the sense that
            \begin{equation*}
                (Y_t\otimes \hat{Y}_t)(\bar{\omega}_0,\bar{\omega}_1,\bar{\omega}_2)
                = (Y_t\otimes \hat{Y}_t)^*(\bar{\omega}_0,\bar{\omega}_2,\bar{\omega}_1).
            \end{equation*}
            In fact, $H_{\mu\mu}$ and $g_{\mu\mu}$ also have this symmetry (cf. again \cite{CarmonaDelarue2018I} Remark 5.89), and we can see that the other terms in \eqref{eq:Second-Level Second-Order Adjoint} also transform into each other if they are transposed and the second and third variable are swapped. Thus, due to uniqueness of a solution, the same symmetry holds for the second-level second-order adjoint, i.e.
            \begin{align*}
                \mathfrak{P}_t(\bar{\omega}_0,\bar{\omega}_1,\bar{\omega}_2)
                &=\mathfrak{P}^*_t(\bar{\omega}_0,\bar{\omega}_2,\bar{\omega}_1),\quad \\
                \mathfrak{Q}^{(1),k}(\bar{\omega}_0,\bar{\omega}_1,\bar{\omega}_2)
                &=\mathfrak{Q}^{(2),k,*}(\bar{\omega}_0,\bar{\omega}_2,\bar{\omega}_1)\quad\text{and}\\
                \mathfrak{R}^k_t(\bar{\omega}_0,\bar{\omega}_1,\bar{\omega}_2)
                &=\mathfrak{R}^{k,*}_t(\bar{\omega}_0,\bar{\omega}_2,\bar{\omega}_1).
            \end{align*}
            for all $k\in \{1,\dots,d\}$ and $\bar{\P}$-almost surely.
        \end{enumerate}
    \end{remark}

\section{The Duality Relations}\label{sec:The Duality Relations}
Since we desire to linearize the terms in the Taylor expansion of the cost functional, we will need the duality relations, which arise directly from Ito's formula. By Lemmas \ref{lemma:Estimates on Y and Z} and \ref{lemma:estimate on DeltaX-Y-Z}, we will put terms that are of higher-order into $o(\epsilon)$. First, we have the dualization of the first-order adjoint with the first-order variational process
\begin{equation}
    \begin{aligned}
        \bar{\E}\left[\langle p_T, Y_T\rangle\right]
        =& \bar{\E}\bigg[\int_0^T -f_x(\theta_t) Y_t
        - \tilde{\E}\left[f_\mu(\theta_t)(\tilde{X}_t) \tilde{Y}_t\right]\\
        &+\langle p_t,\delta A(t) \rangle 
        + \langle q_t,\delta B(t) \rangle
        + \langle r_t,\delta C(t) \rangle\dt\bigg],
    \end{aligned} \label{eq:duality p and Y}
\end{equation}
and the dualization of the first-order adjoint with the second-order variational process
\begin{equation}
    \begin{aligned}
        \bar{\E}\left[\langle p_T, Z_T\rangle\right]
        =& \bar{\E}\bigg[\int_0^T -f_x(\theta_t)Z_t
        - \tilde{\E}[f_\mu(\theta_t)(\tilde{X}_t) \tilde{Z}_t]  \\
        & + \big\langle p_t,  \frac{1}{2} A_{xx}(\theta_t)[Y_t,Y_t]
        +  \frac{1}{2} \tilde{\E}\left[ A_{y\mu}(\theta_t)(\tilde{X}_t)[ \tilde{Y}_t,\tilde{Y}_t]\right]\\ 
        &+\frac{1}{2}\tilde{\E} \Big[\tilde{\tilde\E}\big[ A_{\mu\mu}(\theta_t)(\tilde{X}_t,\tilde{\tilde{X}}_t)[\tilde{Y}_t,\tilde{\tilde{Y}}_t]\big]\Big] 
        +\tilde{\E}\left[ A_{x\mu}(\theta_t)(\tilde{X}_t)[ \tilde{Y}_t,Y_t]\right]
        \big\rangle \\
        &+ \big\langle q_t,  \frac{1}{2} B_{xx}(\theta_t)[Y_t,Y_t]
        +  \frac{1}{2} \tilde{\E}\left[ B_{y\mu}(\theta_t)(\tilde{X}_t)[ \tilde{Y}_t,\tilde{Y}_t]\right] \\
        &+\frac{1}{2}\tilde{\E} \Big[\tilde{\tilde\E}\big[ B_{\mu\mu}(\theta_t)(\tilde{X}_t,\tilde{\tilde{X}}_t)[\tilde{Y}_t,\tilde{\tilde{Y}}_t]\big]\Big] 
        +\tilde{\E}\left[ B_{x\mu}(\theta_t)(\tilde{X}_t)[ \tilde{Y}_t,Y_t]\right]
        \big\rangle \\
        &+ \big\langle r_t,  \frac{1}{2} C_{xx}(\theta_t)[Y_t,Y_t]
        +  \frac{1}{2} \tilde{\E}\left[ C_{y\mu}(\theta_t)(\tilde{X}_t)[ \tilde{Y}_t,\tilde{Y}_t]\right]\\
        &+\frac{1}{2}\tilde{\E} \left[\tilde{\tilde\E}\left[ C_{\mu\mu}(\theta_t)(\tilde{X}_t,\tilde{\tilde{X}}_t)[\tilde{Y}_t,\tilde{\tilde{Y}}_t]\right]\right]
        +\tilde{\E}\left[ C_{x\mu}(\theta_t)(\tilde{X}_t)[ \tilde{Y}_t,Y_t]\right]
        \big\rangle\dt\bigg] 
        +o(\epsilon).
    \end{aligned} \label{eq:duality p and Z}
\end{equation}
Towards the dualization of second-order terms, we look at the tensor product of the first-order variational process $Y$ with itself. Note again that $x,y\in\R^d$, it holds $x\otimes y:=xy^\top=xy^*\in \R^{d\times d}$, since we work in finite dimensions. We get
\begin{align*}
    &d\left(Y_t \otimes Y_t\right)\\
    &=\Big((A_x(\theta_t)Y_t+\tilde{\E}[A_\mu(\theta_t)(\tilde{X}_t)\tilde{Y}_t]+\delta A(t)) \otimes Y_t\\
    &+Y_t \otimes (A_x(\theta_t)Y_t+\tilde{\E}[A_\mu(\theta_t)(\tilde{X}_t)\tilde{Y}_t]+\delta A(t))\\
    &+(B_x(\theta_t)Y_t+\tilde{\E}[B_\mu(\theta_t)(\tilde{X}_t)\tilde{Y}_t]+\delta B(t))(B_x(\theta_t)Y_t+\tilde{\E}[B_\mu(\theta_t)(\tilde{X}_t)\tilde{Y}_t]+\delta B(t))^* \\
    &+(C_x(\theta_t)Y_t+\tilde{\E}[C_\mu(\theta_t)(\tilde{X}_t)\tilde{Y}_t]+\delta C(t))(C_x(\theta_t)Y_t+\tilde{\E}[C_\mu(\theta_t)(\tilde{X}_t)\tilde{Y}_t]+\delta C(t))^*\Big) \dt\\
    &+\sum_{k=1}^d\Big(\big(B^k_x(\theta_t)Y_t+\tilde{\E}[B^k_\mu(\theta_t)(\tilde{X}_t)\tilde{Y}_t]+\delta B^k(t) \big) \otimes Y_t\\
    &+Y_t \otimes \big(B^k_x(\theta_t)Y_t+\tilde{\E}[B^k_\mu(\theta_t)(\tilde{X}_t)\tilde{Y}_t]+\delta B^k(t) \big)\Big)\dW^{1,k}_t\\
    &+\sum_{k=1}^d\Big(\big(C^k_x(\theta_t)Y_t+\tilde{\E}[C^k_\mu(\theta_t)(\tilde{X}_t)\tilde{Y}_t]+\delta C^k(t) \big) \otimes Y_t\\
    &+Y_t \otimes \big(C^k_x(\theta_t)Y_t+\tilde{\E}[C^k_\mu(\theta_t)(\tilde{X}_t)\tilde{Y}_t]+\delta C^k(t) \big)\Big)\dW^{0,k}_t.
\end{align*}
Thus, the pairing of the first-level second-order adjoint process with this tensor product gives
\begin{equation}
    \begin{aligned}
        &\bar{\E}\left[\left\langle P_T, Y_T \otimes Y_T\right\rangle\right]\\
        =&\bar{\E}\bigg[\int_0^T-\langle H_{xx}(\theta_t)+ \tilde{\E}[ H_{y\mu}(\tilde{\theta}_t)(X_t) ]  , Y_t \otimes Y_t\rangle
        +\langle P_t, \tilde{\E}[A_\mu(\theta_t)(\tilde{X}_t)\tilde{Y}_t]\otimes Y_t\\
        &+Y_t \otimes \tilde{\E}[A_\mu(\theta_t)(\tilde{X}_t)\tilde{Y}_t]+\delta B(t)\delta B^*(t)
        +B_x(\theta_t)Y_t\tilde{\E}[B_\mu(\theta_t)(\tilde{X}_t)\tilde{Y}_t]^*\\
        &+\tilde{\E}[B_\mu(\theta_t)(\tilde{X}_t)\tilde{Y}_t]\tilde{\E}[B_\mu(\theta_t)(\tilde{X}_t)\tilde{Y}_t]^*
        +\tilde{\E}[B_\mu(\theta_t)(\tilde{X}_t)\tilde{Y}_t](B_x(\theta_t)Y_t)^*\\
        &+\delta C(t)\delta C^*(t)
        +C_x(\theta_t)Y_t\tilde{\E}[C_\mu(\theta_t)(\tilde{X}_t)\tilde{Y}_t]^*\\
        &+\tilde{\E}[C_\mu(\theta_t)(\tilde{X}_t)\tilde{Y}_t]\tilde{\E}[C_\mu(\theta_t)(\tilde{X}_t)\tilde{Y}_t]^*
        +\tilde{\E}[C_\mu(\theta_t)(\tilde{X}_t)\tilde{Y}_t](C_x(\theta_t)Y_t)^*\rangle \\
        & +\sum_{k=1}^d \langle Q^k_t, \tilde{\E}[B^k_\mu(\theta_t)(\tilde{X}_t)\tilde{Y}_t] \otimes Y_t+Y_t \otimes \tilde{\E}[B^k_\mu(\theta_t)(\tilde{X}_t)\tilde{Y}_t]\rangle \\
        & +\sum_{k=1}^d \langle R^k_t, \tilde{\E}[C^k_\mu(\theta_t)(\tilde{X}_t)\tilde{Y}_t] \otimes Y_t+Y_t \otimes \tilde{\E}[C^k_\mu(\theta_t)(\tilde{X}_t)\tilde{Y}_t]\rangle \dt\bigg]+o(\epsilon).
    \end{aligned}\label{eq:first second duality}
\end{equation}
For all the above dualizations it would have sufficed to look at the processes on the original probability space and use the standard expectation. For the dualization of the second-level second-order adjoint process the construction of the probability space $\bar{\Omega}$ is important, as the dualization needs to be with the tensor product of the first-order variational process $Y$ with its conditionally-independent copy $\hat{Y}$ (introduced above).
Note first that due to Remark \ref{remark_appendix:Conditional Copy of SDE},
\begin{equation*}
    \begin{aligned}
        d\hat{Y}_t 
        =& \left( A_x(\hat{\theta}_t)\hat{Y}_t +\tilde{\E}\left[A_\mu(\hat{\theta}_t)( \tilde{\hat{X}}_t) \tilde{\hat{Y}}_t\right] 
        + \delta \hat{A}(t) \right) \dt  \\
        &+ \left( B_x(\hat{\theta}_t)\hat{Y}_t + \tilde{\E}\left[B_\mu(\hat{\theta}_t)( \tilde{\hat{X}}_t) \tilde{\hat{Y}}_t\right]
        + \delta \hat{B}(t) \right) \,\mathrm{d}\hat{W}^1_t \\
        &+ \left( C_x(\hat{\theta}_t)\hat{Y}_t + \tilde{\E}\left[C_\mu(\hat{\theta}_t)( \tilde{\hat{X}}_t) \tilde{\hat{Y}}_t\right]
        + \delta \hat{C}(t) \right) \dW^0_t  \\
        \hat{Y}_0 =& 0.
    \end{aligned}
\end{equation*}
This results in
\begin{align*}
    &d\left(Y_t \otimes \hat{Y}_t\right)\\
    =&\bigg((A_x(\theta_t)Y_t+\tilde{\E}[A_\mu(\theta_t)(\tilde{X}_t)\tilde{Y}_t]+\delta A(t)) \otimes \hat{Y}_t
    +Y_t \otimes (A_x(\hat{\theta}_t)\hat{Y}_t+\tilde{\E}[A_\mu(\hat{\theta}_t)(\tilde{X}_t)\tilde{Y}_t]+\delta \hat{A}(t))\\
    &+\left( C_x(\theta_t)Y_t + \tilde{\E}\left[C_\mu(\theta_t)(\tilde{X}_t)\tilde{Y}_t\right]+ \delta C(t) \right)
    \left( C_x(\hat{\theta}_t)\hat{Y}_t + \tilde{\E}\left[C_\mu(\hat{\theta}_t)( \tilde{\hat{X}}_t) \tilde{\hat{Y}}_t\right]+ \delta \hat{C}(t) \right)^*\bigg)\dt\\
    &+\sum_{k=1}^d\left(B^k_x(\theta_t)Y_t+\tilde{\E}[B^k_\mu(\theta_t)(\tilde{X}_t)\tilde{Y}_t]+\delta B^k(t) \right) \otimes \hat{Y}_t \dW^{1,k}_t\\
    &+\sum_{k=1}^d Y_t \otimes \left(B^k_x(\hat{\theta}_t)\hat{Y}_t+\tilde{\E}[B^k_\mu(\hat{\theta}_t)(\tilde{X}_t)\tilde{Y}_t]+\delta \hat{B}^k(t) \right)\,\mathrm{d}\hat{W}^{1,k}_t,\\
    &+\bigg(\sum_{k=1}^d\left(C^k_x(\theta_t)Y_t+\tilde{\E}[C^k_\mu(\theta_t)(\tilde{X}_t)\tilde{Y}_t]+\delta C^k(t) \right) \otimes \hat{Y}_t\\
    &+\sum_{k=1}^d Y_t \otimes \left(C^k_x(\hat{\theta}_t)\hat{Y}_t+\tilde{\E}[C^k_\mu(\hat{\theta}_t)(\tilde{X}_t)\tilde{Y}_t]+\delta \hat{C}^k(t) \right)\bigg)\dW^{0,k}_t,
\end{align*}
and thus,
\begin{equation}
    \begin{aligned}
        &\bar{\E}\left[\left\langle \mathfrak{P}_T, Y_T \otimes \hat{Y}_T\right\rangle\right]\\
        =&\bar{\E}\bigg[\int_0^T-\big\langle \tilde{\E} \left[H_{\mu\mu}(\tilde{\theta}_t)(X_t,\hat{X}_t)\right]
        +2 H_{x\mu}(\hat{\theta}_t)(X_t)
        + P^* _t A_\mu(\theta_t)(\hat{X}_t)
        + P_t A_\mu(\theta_t)(\hat{X}_t)\\
        & + B^*_x(\theta_t) P_t B_\mu(\theta_t)(\hat{X}_t) 
        + \tilde{\E}[B^*_\mu(\tilde{\theta}_t)(X_t)\tilde{P}_t B_\mu(\tilde{\theta}_t)(\hat{X}_t)]
        + B^*_x(\theta_t) P^*_t B_\mu(\theta_t)(\hat{X}_t) \\
        & + C^*_x(\theta_t) P_t C_\mu(\theta_t)(\hat{X}_t) 
        + \tilde{\E}[C^*_\mu(\tilde{\theta}_t)(X_t)\tilde{P}_t C_\mu(\tilde{\theta}_t)(\hat{X}_t)]
        + C^*_x(\theta_t) P^*_t C_\mu(\theta_t)(\hat{X}_t) \\
        &+ \sum_{k=1}^d \Big(
        Q^{k,*}_t B^k_\mu(\theta_t)(\hat{X}_t)
        +Q^{k}_t B^k_\mu(\theta_t)(\hat{X}_t)\\
        &+R^{k,*}_t C^k_\mu(\theta_t)(\hat{X}_t)
        +R^{k}_t C^k_\mu(\theta_t)(\hat{X}_t), Y_t \otimes \hat{Y}_t\big\rangle+\langle \mathfrak{P}_t ,\delta C(t)\delta \hat{C}^*(t)\rangle \dt\bigg] +o(\epsilon),
    \end{aligned}\label{eq:second second duality}
\end{equation}
where we used Fubini for the cancellation of the terms.
\begin{remark}
    Note the appearance of the $\langle P_t,\delta B(t) \delta B^*(t)+\delta C(t) \delta C^*(t)\rangle$-term in the dualization \eqref{eq:first second duality}. This term comes from the quadratic variation of $Y$ and neither is of high order, nor can be eliminated by the dualization as is does not contain $Y$. Therefore, this term and, in particular, the first-level second-order adjoint process, will appear in the maximum principle. On the other hand, the quadratic covariation of $Y$ and $\hat{Y}$ only comes from the common noise, as the stochastic integrals with respect to $W^1$ and $\hat{W}^1$ are by construction independent. Thus, in the dualization \eqref{eq:second second duality} only the term $\langle \mathfrak{P}_t ,\delta C(t)\delta \hat{C}^*(t)\rangle$ appears. Still, this will lead to the appearance of the second-level second-order adjoint process in the maximum principle. This is a key difference to the non-common noise case \cite{spille2026}, where the second-level second-order process does not appear, as $Y$ and the lift $\hat{Y}$ are fully independent, so no quadratic covariation appears.
\end{remark}
\section{The Taylor Expansion of the Cost Functional and the Maximum Principle}\label{sec:Expansion of Cost Functional and Maximum Principle}
    We come to the most important result towards the maximum principle, the expansion of the cost functional up to order $o(\epsilon)$.
    \begin{proposition} \label{proposition:expansion of cost functional}
        Under Assumption \ref{Assumption:C^{2,1}}, if $\alpha\in \A$ is optimal, it holds
        \begin{equation}
            J(\alpha^{\epsilon})-J(\alpha)
            =  \bar{\E} \left[ \int_0^T \delta H(t)
            +\frac{1}{2}\big\langle P_t,
            \delta B(t)\delta B(t)^*
            +\delta C(t)\delta C^*(t)\big\rangle
            +\frac{1}{2}\langle \mathfrak{P}_t ,\delta C(t)\delta \hat{C}^*(t)\rangle \dt\right] +o(\epsilon).\label{eq:Final Expansion of Cost Functional}
        \end{equation}
    \end{proposition}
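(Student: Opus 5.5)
The plan is the classical Peng argument: a second-order Taylor expansion of $J$ along $X^\epsilon=X+Y+Z+o$, followed by replacing every term that depends implicitly on the variation with explicit $\delta$-terms via the four duality relations \eqref{eq:duality p and Y}--\eqref{eq:second second duality}.

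\textbf{Step 1: expand $J$.} Write
\[
J(\alpha^\epsilon)-J(\alpha)=\E\Big[\int_0^T f(t,X^\epsilon_t,\mu^\epsilon_t,\alpha^\epsilon_t)-f(\theta_t)\dt+g(X^\epsilon_T,\mu^\epsilon_T)-g(X_T,\mu_T)\Big],
\]
and split the running cost as
\[
\delta f(t)+\big[f(t,X^\epsilon_t,\mu^\epsilon_t,\alpha^\epsilon_t)-f(t,X_t,\mu_t,\alpha^\epsilon_t)\big].
\]
Apply the second-order Taylor formula in $(x,\mu)$ from Appendix \ref{appendix:Taylor formula for x and mu} to the bracket and to the terminal cost. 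The remainder is controlled uniformly by the Lipschitz continuity of the second derivatives, so it is bounded by $\E[\sup_t\|\Delta X_t\|^3]$-type quantities. By \eqref{eq:DeltaXEstimate} these are $O(\epsilon^{3/2})=o(\epsilon)$.

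\textbf{Step 2: replace $\Delta X$ by $Y+Z$.} In the first-order terms replace $\Delta X$ by $Y+Z$ using Lemma \ref{lemma:estimate on DeltaX-Y-Z}. In the quadratic terms replace $\Delta X$ by $Y$ using \eqref{eq:DeltaX minus first Variational Estimate} and \eqref{eq:SecondVariationalEstimate}. Cross terms such as $\delta f_x(t)Y_t$ are $o(\epsilon)$, since they are supported on $E_\epsilon$ and $Y=O(\epsilon^{1/2})$. What remains is:
\begin{itemize}
\item the first-order terms $f_x(Y+Z)+\tilde\E[f_\mu(\tilde Y+\tilde Z)]$ and the analogous terms for $g$;
\item the quadratic terms $\tfrac12 f_{xx}[Y,Y]$, $\tfrac12\tilde\E[f_{y\mu}[\tilde Y,\tilde Y]]$, $\tfrac12\tilde\E\tilde{\tilde\E}[f_{\mu\mu}[\tilde Y,\tilde{\tilde Y}]]$ and $\tilde\E[f_{x\mu}[\tilde Y,Y]]$, together with their $g$-counterparts.
\end{itemize}

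\textbf{Step 3: dualize the first-order terms.} The terminal first-order terms equal $\E[\langle p_T,Y_T+Z_T\rangle]$, using Fubini (swapping $\omega^1\leftrightarrow\tilde\omega$) to move the $\mu$-derivative onto $X_T$. Substituting \eqref{eq:duality p and Y} and \eqref{eq:duality p and Z} cancels the running first-order $f$-terms. It produces $\langle p,\delta A\rangle+\langle q,\delta B\rangle+\langle r,\delta C\rangle$, which combines with $\delta f$ into $\delta H$. It also produces the second-order $A,B,C$ terms paired with $p,q,r$, which combine with the $f$-quadratic terms into $H$-second derivatives evaluated at $[Y,Y]$, $[\tilde Y,\tilde Y]$, $[\tilde Y,\tilde{\tilde Y}]$ and $[\tilde Y,Y]$.

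\textbf{Step 4: dualize the quadratic terms on $\bar\Omega$.} Rewrite every quadratic term as a pairing with either $Y_t\otimes Y_t$ or $Y_t\otimes\hat Y_t$:
\begin{itemize}
\item the $H_{xx}$ and $y\mu$ terms, via $\bar\E[\tilde\E[\cdot(\tilde X)[\tilde Y,\tilde Y]]]=\bar\E[\langle\tilde\E[\cdot(\tilde X)(X)],Y\otimes Y\rangle]$, become pairings with $Y_t\otimes Y_t$;
\item the $\mu\mu$ and $x\mu$ terms, using $\E[\tilde\E[\varphi(X,\tilde X)]]=\bar\E[\varphi(X,\hat X)]$, become pairings with $Y_t\otimes\hat Y_t$.
\end{itemize}
Do the same at time $T$. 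The terminal quadratic cost then equals
\[
\tfrac12\bar\E\langle P_T,Y_T\otimes Y_T\rangle+\tfrac12\bar\E\langle\mathfrak P_T,Y_T\otimes\hat Y_T\rangle,
\]
because $P_T$ and $\mathfrak P_T$ are chosen exactly as these kernels (the two $x\mu$ terms in $\mathfrak P_T$ account for the factor 2).

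Inserting \eqref{eq:first second duality} and \eqref{eq:second second duality} should cancel all running quadratic $H$-terms. It should also cancel the cross terms that \eqref{eq:first second duality} produces with $Y\otimes\tilde\E[A_\mu\tilde Y]$ etc.: after Fubini these become $Y\otimes\hat Y$ pairings such as $P_tA_\mu(\theta_t)(\hat X_t)$, and they are matched with opposite sign by the corresponding terms in \eqref{eq:second second duality}. The only survivors are $\tfrac12\langle P,\delta B\delta B^*+\delta C\delta C^*\rangle$ and $\tfrac12\langle\mathfrak P,\delta C\delta\hat C^*\rangle$, which gives \eqref{eq:Final Expansion of Cost Functional}. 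Finally, note that $\bar\E$ of $\Omega$-measurable quantities is just $\E$.

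\textbf{Main obstacle.} The hard part is the bookkeeping in Step 4. Each $\tilde\E$-term has to be transported correctly between the $\tilde{}$, $\hat{}$ and $\tilde{\hat{}}$ variables via Fubini and the conditional-independence structure. One must also check that the symmetric/transposed pairs (e.g. $P^*$ vs $P$, and $Q^{k,*}$ vs $Q^k$, which agree by symmetry of $P,Q^k,R^k$) match the drift of $\mathfrak P$, so that every term involving $Y$ cancels. I would organize this by listing every $Y\otimes\hat Y$-type term with its coefficient and verifying cancellation line by line against \eqref{eq:Second-Level Second-Order Adjoint}.
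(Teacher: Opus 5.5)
Your proposal matches the paper's proof essentially step by step. You use the second-order Taylor expansion in $(x,\mu)$ with derivatives taken at $\alpha^\epsilon$, then switched to $\alpha$ at cost $o(\epsilon)$. You replace $\Delta X$ by $Y+Z$, keeping only the $[Y,Y]$-type quadratic terms. You then dualize successively with $(p,q,r)$, $P$ and $\mathfrak{P}$, with the $Y\otimes\hat Y$ cross terms produced by the $P$-duality absorbed by the drift of $\mathfrak{P}$. Like the paper, you leave the final term-by-term cancellation as bookkeeping, and the surviving terms you identify, $\delta H$, $\tfrac12\langle P,\delta B\delta B^*+\delta C\delta C^*\rangle$ and $\tfrac12\langle\mathfrak{P},\delta C\delta\hat C^*\rangle$, are exactly those in the statement.
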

    \begin{proof}
        The proof follows the same line as in \cite{spille2026}, but also the new terms coming from the common noise are treated.\\
        Using Taylor's formula (cf. Lemma \ref{lemma:Taylor formula for x and mu dependence}), we get the expansion of the cost functional
        \begin{align*}
            &J(\alpha^{\epsilon})-J(\alpha )\\
            =&\mathbb{E}\Big[\int_0^T f\left(\theta_t^\epsilon\right)-f\left(\theta_t\right) \dt
            +g(X_T^{\epsilon}, \mu^\epsilon_T)-g(X_T, \mu_T)\Big] \\
            = & \bar{\E} \Bigg[ \int_0^T f_x(\theta_t)[\Delta X_t]+\tilde{\mathbb{E}}\left[f_\mu(\theta_t)(\tilde{X}_t)[\Delta \tilde{X}_t\right]
            +\delta f(t)\\
            &+\frac{1}{2}f_{xx}(\theta_t)[\Delta X_t,\Delta X_t]
            +\tilde{\mathbb{E}}\left[f_{x\mu}(\theta_t)(\tilde{X}_t)[\Delta \tilde{X}_t,\Delta X_t]\right]\\
            & +\frac{1}{2}\tilde{\mathbb{E}}\left[f_{y\mu}(\theta_t)(\tilde{X}_t)[\Delta \tilde{X}_t,\Delta \tilde{X}_t]\right]
            +\frac{1}{2}\tilde{\tilde{\E}} \left[\tilde{\mathbb{E}} \left[f_{\mu\mu}(\theta_t)(\tilde{X}_t,\tilde{\tilde{X}}_t)[\Delta \tilde{X}_t,\Delta \tilde{\tilde{X}}_t]\right]\right] \dt\\
            & +g_x(X_T,\mu_T)[\Delta X_T]
            +\tilde{\mathbb{E}}\left[g_\mu(X_T,\mu_T)(\tilde{X}_T)[\Delta \tilde{X}_T]\right] \\
            & +\frac{1}{2}g_{x x}(X_T,\mu_T)[\Delta X_T,\Delta X_T]
            +\frac{1}{2}\tilde{\mathbb{E}}\left[g_{y\mu}(X_T,\mu_T)(\tilde{X}_T)[\Delta \tilde{X}_T,\Delta \tilde{X}_T]\right]\\
            & +\tilde{\mathbb{E}}\left[g_{x\mu}(X_T,\mu_T)(\tilde{X}_T)[\Delta \tilde{X}_T,\Delta X_T]\right]\\
            &+\frac{1}{2}\tilde{\tilde{\E}} \left[\tilde{\mathbb{E}} \left[g_{\mu\mu}(X_T,\mu_T)(\tilde{X}_T,\tilde{\tilde{X}}_T)[\Delta \tilde{X}_T,\Delta \tilde{\tilde{X}}_T]\right]\right]\Bigg]
            +o(\epsilon) .
        \end{align*}
        Note that the Taylor expansion was done for fixed $\alpha^\epsilon$, but we can afterwards replace $\alpha^\epsilon$ by $\alpha$ as the resulting remainders are at least of order $\epsilon \sup_t\E[ \|\Delta X_t\|]\in o(\epsilon)$.\\ 
        By the linearity, we can now everywhere replace $\Delta X_t$ by $Y_t+Z_t$ and the difference is in $o(\epsilon)$ by the estimate from Lemma \ref{lemma:estimate on DeltaX-Y-Z}. Furthermore, also the terms containing $[Y ,Z]$ or $[Z,Z]$ (the quadratic terms from the second derivatives) are of high-enough order by the estimates from Lemma \ref{lemma:Estimates on Y and Z}, e.g. 
        \begin{equation*}
            \E\left[\frac{1}{2}f_{xx}(\theta_t)[Y_t+Z_t,Y_t+Z_t]\right]
            =\E\left[\frac{1}{2}f_{xx}(\theta_t)[Y_t,Y_t]\right]+o(\epsilon),
        \end{equation*}
        and similarly for the other terms. Thus,
        \begin{align}
            J(\alpha^{\epsilon})-J(\alpha )
            = & \bar{\E}\Bigg[ \int_0^T f_x(\theta_t)[Y_t+Z_t]+\tilde{\mathbb{E}}\left[f_\mu(\theta_t)(\tilde{X}_t)(\tilde{Y}_t+\tilde{Z}_t)\right]
            +\delta f(t)\nonumber\\
            &+\frac{1}{2}f_{xx}(\theta_t)[Y_t,Y_t]
            +\tilde{\mathbb{E}}\left[f_{x\mu}(\theta_t)[\tilde{X}_t)[\tilde{Y}_t,Y_t]\right]\nonumber\\
            & +\frac{1}{2}\tilde{\mathbb{E}}\left[f_{y\mu}(\theta_t)(\tilde{X}_t)[\tilde{Y}_t,\tilde{Y}_t]\right]
            +\frac{1}{2}\tilde{\tilde{\E}} \left[\tilde{\mathbb{E}} \left[f_{\mu\mu}(\theta_t)(\tilde{X}_t,\tilde{\tilde{X}}_t)[\tilde{Y}_t,\tilde{\tilde{Y}}_t]\right]\right] \dt\nonumber\\
            & +g_x(X_T,\mu_T)[Y_T+Z_T]
            +\tilde{\mathbb{E}}\left[g_\mu(X_T,\mu_T)(\tilde{X}_T)[\tilde{Y}_T+\tilde{Z}_T]\right] \label{eq:First Duality Replacement}\\
            & +\frac{1}{2}g_{x x}(X_T,\mu_T)[Y_T,Y_T]
            +\frac{1}{2}\tilde{\mathbb{E}}\left[g_{y\mu}(X_T,\mu_T)(\tilde{X}_T)[\tilde{Y}_T,\tilde{Y}_T]\right] \nonumber
            \\
            &+\tilde{\mathbb{E}}\left[g_{x\mu}(X_T,\mu_T)(\tilde{X}_T)[\tilde{Y}_T,Y_T]\right] \nonumber\\
            &+\frac{1}{2}\tilde{\tilde{\E}} \left[\tilde{\mathbb{E}} \left[g_{\mu\mu}(X_T,\mu_T)(\tilde{X}_T,\tilde{\tilde{X}}_T)[\tilde{Y}_T,\tilde{\tilde{Y}}_T]\right]\right]\Bigg]+o(\epsilon).\nonumber
        \end{align}
        Now, we replace \eqref{eq:First Duality Replacement} with the duality relations \eqref{eq:duality p and Y} and \eqref{eq:duality p and Z}, which gives
        \begin{align}
            J(\alpha^{\epsilon})-J(\alpha )
            = & \bar{\E} \Bigg[ \int_0^T \delta H(t)
            +\frac{1}{2}H_{xx}(\theta_t)[Y_t,Y_t]
            +\tilde{\mathbb{E}}\left[H_{x\mu}(\theta_t)[\tilde{X}_t)[\tilde{Y}_t,Y_t]\right]\nonumber\\
            & +\frac{1}{2}\tilde{\mathbb{E}}\left[H_{y\mu}(\theta_t)(\tilde{X}_t)[\tilde{Y}_t,\tilde{Y}_t]\right]
            +\frac{1}{2}\tilde{\tilde{\E}} \left[\tilde{\mathbb{E}} \left[H_{\mu\mu}(\theta_t)(\tilde{X}_t,\tilde{\tilde{X}}_t)[\tilde{Y}_t,\tilde{\tilde{Y}}_t]\right]\right] \dt\nonumber\\
            & +\frac{1}{2}g_{x x}(X_T,\mu_T)[Y_T,Y_T]
            +\frac{1}{2}\tilde{\mathbb{E}}\left[g_{y\mu}(X_T,\mu_T)(\tilde{X}_T)[\tilde{Y}_T,\tilde{Y}_T]\right] \label{eq:Second Duality Replacement}
            \\
            &+\tilde{\mathbb{E}}\left[g_{x\mu}(X_T,\mu_T)(\tilde{X}_T)[\tilde{Y}_T,Y_T]\right] \nonumber\\
            &+\frac{1}{2}\tilde{\tilde{\E}} \left[\tilde{\mathbb{E}} \left[g_{\mu\mu}(X_T,\mu_T)(\tilde{X}_T,\tilde{\tilde{X}}_T)[\tilde{Y}_T,\tilde{\tilde{Y}}_T]\right]\right]\Bigg]
            +o(\epsilon),\nonumber
        \end{align}
        and then replace
        \eqref{eq:Second Duality Replacement} with the duality relation \eqref{eq:first second duality} to get
        \begin{align}
            &J(\alpha^{\epsilon})-J(\alpha )\nonumber\\
            = & \bar{\E}\Bigg[ \int_0^T \delta H(t)
            +\tilde{\mathbb{E}}\left[H_{x\mu}(\theta_t)[\tilde{X}_t)[\tilde{Y}_t,Y_t]\right]
            +\frac{1}{2}\tilde{\tilde{\E}} \left[\tilde{\mathbb{E}} \left[H_{\mu\mu}(\theta_t)(\tilde{X}_t,\tilde{\tilde{X}}_t)[\tilde{Y}_t,\tilde{\tilde{Y}}_t]\right]\right] \nonumber\\
            &+\frac{1}{2}\langle P_t, \tilde{\E}[A_\mu(\theta_t)(\tilde{X}_t)\tilde{Y}_t]\otimes Y_t \nonumber\\
            &+Y_t \otimes \tilde{\E}[A_\mu(\theta_t)(\tilde{X}_t)\tilde{Y}_t]+\delta B(t)\delta B^*(t)
            +B_x(\theta_t)Y_t\tilde{\E}[B_\mu(\theta_t)(\tilde{X}_t)\tilde{Y}_t]^* \nonumber\\
            &+\tilde{\E}[B_\mu(\theta_t)(\tilde{X}_t)\tilde{Y}_t]\tilde{\E}[B_\mu(\theta_t)(\tilde{X}_t)\tilde{Y}_t]^*
            +\tilde{\E}[B_\mu(\theta_t)(\tilde{X}_t)\tilde{Y}_t](B_x(\theta_t)Y_t)^* \nonumber\\
            &+\delta C(t)\delta C^*(t)
            +C_x(\theta_t)Y_t\tilde{\E}[C_\mu(\theta_t)(\tilde{X}_t)\tilde{Y}_t]^* \nonumber\\
            &+\tilde{\E}[C_\mu(\theta_t)(\tilde{X}_t)\tilde{Y}_t]\tilde{\E}[C_\mu(\theta_t)(\tilde{X}_t)\tilde{Y}_t]^*
            +\tilde{\E}[C_\mu(\theta_t)(\tilde{X}_t)\tilde{Y}_t](C_x(\theta_t)Y_t)^*\rangle \nonumber\\
            & +\frac{1}{2}\sum_{k=1}^d \langle Q^k_t, \tilde{\E}[B^k_\mu(\theta_t)(\tilde{X}_t)\tilde{Y}_t] \otimes Y_t+Y_t \otimes \tilde{\E}[B^k_\mu(\theta_t)(\tilde{X}_t)\tilde{Y}_t]\rangle \nonumber\\
            & +\frac{1}{2}\sum_{k=1}^d \langle R^k_t, \tilde{\E}[C^k_\mu(\theta_t)(\tilde{X}_t)\tilde{Y}_t] \otimes Y_t+Y_t \otimes \tilde{\E}[C^k_\mu(\theta_t)(\tilde{X}_t)\tilde{Y}_t]\rangle \Bigg)  \dt \nonumber \\
            &+\tilde{\mathbb{E}}\left[g_{x\mu}(X_T,\mu_T)(\tilde{X}_T)[\tilde{Y}_T,Y_T]\right] 
            +\frac{1}{2}\tilde{\tilde{\E}} \left[\tilde{\mathbb{E}} \left[g_{\mu\mu}(X_T,\mu_T)(\tilde{X}_T,\tilde{\tilde{X}}_T)[\tilde{Y}_T,\tilde{\tilde{Y}}_T]\right]\right]\Bigg] \label{eq:Third Duality Replacement}
            \\
            &+o(\epsilon).\nonumber
        \end{align}
        Finally, we use the second-level second-order adjoint process and its dualization \eqref{eq:second second duality} to replace \eqref{eq:Third Duality Replacement} and arrive at
        \begin{align*}
            &J(\alpha^{\epsilon})-J(\alpha )\\
            = & \bar{\E}\Bigg[ \int_0^T \delta H(t)
            +\tilde{\mathbb{E}}\left[H_{x\mu}(\theta_t)[\tilde{X}_t)[\tilde{Y}_t,Y_t]\right]
            +\frac{1}{2}\tilde{\tilde{\E}} \left[\tilde{\mathbb{E}} \left[H_{\mu\mu}(\theta_t)(\tilde{X}_t,\tilde{\tilde{X}}_t)[\tilde{Y}_t,\tilde{\tilde{Y}}_t]\right]\right] \nonumber\\
            &+\frac{1}{2}\langle P_t, \tilde{\E}[A_\mu(\theta_t)(\tilde{X}_t)\tilde{Y}_t]\otimes Y_t \nonumber\\
            &+Y_t \otimes \tilde{\E}[A_\mu(\theta_t)(\tilde{X}_t)\tilde{Y}_t]+\delta B(t)\delta B^*(t)
            +B_x(\theta_t)Y_t\tilde{\E}[B_\mu(\theta_t)(\tilde{X}_t)\tilde{Y}_t]^* \nonumber\\
            &+\tilde{\E}[B_\mu(\theta_t)(\tilde{X}_t)\tilde{Y}_t]\tilde{\E}[B_\mu(\theta_t)(\tilde{X}_t)\tilde{Y}_t]^*
            +\tilde{\E}[B_\mu(\theta_t)(\tilde{X}_t)\tilde{Y}_t](B_x(\theta_t)Y_t)^* \nonumber\\
            &+\delta C(t)\delta C^*(t)
            +C_x(\theta_t)Y_t\tilde{\E}[C_\mu(\theta_t)(\tilde{X}_t)\tilde{Y}_t]^* \nonumber\\
            &+\tilde{\E}[C_\mu(\theta_t)(\tilde{X}_t)\tilde{Y}_t]\tilde{\E}[C_\mu(\theta_t)(\tilde{X}_t)\tilde{Y}_t]^*
            +\tilde{\E}[C_\mu(\theta_t)(\tilde{X}_t)\tilde{Y}_t](C_x(\theta_t)Y_t)^*\rangle \nonumber\\
            & +\frac{1}{2}\sum_{k=1}^d \langle Q^k_t, \tilde{\E}[B^k_\mu(\theta_t)(\tilde{X}_t)\tilde{Y}_t] \otimes Y_t+Y_t \otimes \tilde{\E}[B^k_\mu(\theta_t)(\tilde{X}_t)\tilde{Y}_t]\rangle \nonumber\\
            & +\frac{1}{2}\sum_{k=1}^d \langle R^k_t, \tilde{\E}[C^k_\mu(\theta_t)(\tilde{X}_t)\tilde{Y}_t] \otimes Y_t+Y_t \otimes \tilde{\E}[C^k_\mu(\theta_t)(\tilde{X}_t)\tilde{Y}_t]\rangle \Bigg)  \dt \nonumber \\
            &+\frac{1}{2}\bigg(-\big\langle \tilde{\E} \left[H_{\mu\mu}(\tilde{\theta}_t)(X_t,\hat{X}_t)\right]
            +2 H_{x\mu}(\hat{\theta}_t)(X_t)
            + P^* _t A_\mu(\theta_t)(\hat{X}_t)
            + P_t A_\mu(\theta_t)(\hat{X}_t)\\
            & + B^*_x(\theta_t) P_t B_\mu(\theta_t)(\hat{X}_t) 
            + \tilde{\E}[B^*_\mu(\tilde{\theta}_t)(X_t)\tilde{P}_t B_\mu(\tilde{\theta}_t)(\hat{X}_t)]
            + B^*_x(\theta_t) P^*_t B_\mu(\theta_t)(\hat{X}_t) \\
            & + C^*_x(\theta_t) P_t C_\mu(\theta_t)(\hat{X}_t) 
            + \tilde{\E}[C^*_\mu(\tilde{\theta}_t)(X_t)\tilde{P}_t C_\mu(\tilde{\theta}_t)(\hat{X}_t)]
            + C^*_x(\theta_t) P^*_t C_\mu(\theta_t)(\hat{X}_t) \\
            &+ \sum_{k=1}^d \Big(
            Q^{k,*}_t B^k_\mu(\theta_t)(\hat{X}_t)
            +Q^{k}_t B^k_\mu(\theta_t)(\hat{X}_t)\\
            &+R^{k,*}_t C^k_\mu(\theta_t)(\hat{X}_t)
            +R^{k}_t C^k_\mu(\theta_t)(\hat{X}_t), Y_t \otimes \hat{Y}_t\big\rangle+\langle \mathfrak{P}_t ,\delta C(t)\delta \hat{C}^*(t)\rangle\bigg)\Bigg]+o(\epsilon)\\
            = & \bar{\E} \left[ \int_0^T \delta H(t)
            +\frac{1}{2}\big\langle P_t,
            \delta B(t)\delta B(t)^*
            +\delta C(t)\delta C^*(t)\big\rangle
            +\frac{1}{2}\langle \mathfrak{P}_t ,\delta C(t)\delta \hat{C}^*(t)\rangle \dt\right] +o(\epsilon),
        \end{align*}
        giving the result.
    \end{proof}\noindent
    \noindent
    For fixed $u\in U$ define 
    \begin{equation*}
        \delta \varphi^u_t:=\varphi(t,X_t,\mu_t,u,p_t,q_t,r_t) - \varphi(t,X_t,\mu_t,\alpha_t,p_t,q_t,r_t)
    \end{equation*}
    for $\varphi=H,B,C$.
    We also recall the notation from Section \ref{subsec:Lifting and Third Adjoint}, that we use a copy $\hat{\Omega}$ of the space $\Omega^1$ where $W^1$ is defined and work on the product space $\bar{\Omega}=\Omega^0\times\Omega^1\times \hat{\Omega}$, which allows to independently lift only 'the randomness coming from $W^1$/$\Omega^1$' but keep 'the randomness coming from $W^0$/$\Omega^0$', i.e. we can define, for a random variable $\Psi:\Omega^0\times\Omega^1\to \R^m$, a (conditionally on $W^0$)-independent copy, by denoting
    \begin{equation*}
        \Psi(\bar{\omega})=\Psi(\bar{\omega}_0,\bar{\omega}_1,\bar{\omega}_2):=\Psi(\bar{\omega}_0,\bar{\omega}_1),\quad \text{and}\quad \hat{\Psi}(\bar{\omega})=\hat{\Psi}(\bar{\omega}_0,\bar{\omega}_1,\bar{\omega}_2):=\Psi(\bar{\omega}_0,\bar{\omega}_2).
    \end{equation*}
    Using \eqref{eq:Final Expansion of Cost Functional}, we get the following maximum principle.
    \begin{theorem}[Peng's Maximum Principle] \label{Theorem:maximum principle}
        Under Assumption \ref{Assumption:C^{2,1}}, if $\alpha$ is optimal and $u\in U$, then for Lebesgue almost every $t\in[0,T]$ and $\P^0$-almost surely,
        \begin{align*}
            0
            \leq  & \E^1\Bigg[\hat{\E}\bigg[
            \delta H^u_t 
            + \frac{1}{2} \langle P_t,\delta B^u_t(\delta B^u_t)^*\rangle
            + \frac{1}{2} \langle P_t,\delta C^u_t(\delta C^u_t)^*\rangle
            + \frac{1}{2} \langle \mathfrak{P}_t,\delta C^u_t(\delta \hat{C}^u_t)^*\rangle\bigg]\Bigg],
        \end{align*}
        or equivalently for almost all $t\in [0,T]$ and $\bar{\P}$-almost surely
        \begin{align*}
            0
            \leq  & \bar{\E}\bigg[
            \delta H^u_t
            + \frac{1}{2} \langle P_t,\delta B^u_t(\delta B^u_t)^*\rangle
            + \frac{1}{2} \langle P_t,\delta C^u_t(\delta C^u_t)^*\rangle
            + \frac{1}{2} \langle \mathfrak{P}_t,\delta C^u_t(\delta \hat{C}^u_t)^*\rangle\,\bigg|\, (W^0_{s})_{s\in[0,t]}\bigg],
        \end{align*}
        where $(p,q,r)$ (contained in $H$), $P$ and $\mathfrak{P}$ come from the first \eqref{eq:First order adjoint}, the second \eqref{eq:First-Level second-order adjoint equation} and the third \eqref{eq:Second-Level Second-Order Adjoint} adjoint equation, respectively.
    \end{theorem}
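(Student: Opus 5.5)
The plan is to deduce the theorem from the expansion \eqref{eq:Final Expansion of Cost Functional} of Proposition \ref{proposition:expansion of cost functional} by a localization argument. Optimality gives $J(\alpha^\epsilon)-J(\alpha)\ge 0$, so for every spike variation
\begin{equation*}
0\le \bar{\E}\left[\int_{E_\epsilon} \delta H(t)+\tfrac12\langle P_t,\delta B(t)\delta B^*(t)+\delta C(t)\delta C^*(t)\rangle+\tfrac12\langle \mathfrak{P}_t,\delta C(t)\delta\hat{C}^*(t)\rangle\dt\right]+o(\epsilon),
\end{equation*}
where we use that all $\delta$-terms vanish off $E_\epsilon$. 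The key freedom is the choice of the replacing control $\beta\in\A$. Take $\beta_t:=u\,\mathbf{1}_{G}+\alpha_t\mathbf{1}_{G^c}$ with $G\in\cF^0_{t_0}$ a set depending only on the common noise. This control is progressively measurable on $[t_0,T]$, and on $G^c$ all $\delta$-terms vanish.

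The important point is that $\hat\beta=\beta$ on $\bar\Omega$. Since $\mathbf 1_G$ depends only on $\bar\omega_0$, it is unchanged by the hat-lift. Hence $\delta C(t)\delta\hat C^*(t)=\mathbf 1_G\,\delta C^u_t(\delta\hat C^u_t)^*$, and the same indicator factors out of every term. Choosing $E_\epsilon=[t_0,t_0+\epsilon]$, dividing by $\epsilon$ and letting $\epsilon\to0$ gives, for every Lebesgue point $t_0$ of
\begin{equation*}
\Phi(t):=\bar\E\big[\mathbf 1_G\,(\delta H^u_t+\tfrac12\langle P_t,\delta B^u_t(\delta B^u_t)^*\rangle+\tfrac12\langle P_t,\delta C^u_t(\delta C^u_t)^*\rangle+\tfrac12\langle\mathfrak P_t,\delta C^u_t(\delta\hat C^u_t)^*\rangle)\big],
\end{equation*}
that $\Phi(t_0)\ge0$.

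\textbf{Main obstacle 1: uniformity of the remainder.} For the limit to be justified, the $o(\epsilon)$ remainder must be uniform with respect to the choice of $\beta$, or at least valid for each fixed $G$. It is valid for each fixed $G$, since $\beta$ is a fixed admissible control.

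\textbf{Main obstacle 2: the exceptional null set depends on $G$.} The set of Lebesgue points depends on $G$, so I must not take $G$ arbitrary for each $t_0$. Instead I would fix a countable family $\{G_n\}$ generating $\cF^0_T$ up to null sets, for instance a $\pi$-system built from rational-time increments of $W^0$. The variation is then built with $G_n\cap\{$something in $\cF^0_{t_0}\}$; more cleanly, I would use controls $\beta$ that equal $u$ on $G_n\in\cF^0_{s_n}$ for rational $s_n\le t_0$. Taking the union of the countably many null sets of non-Lebesgue points yields a full-measure set of times $t$ on which $\bar\E[\mathbf 1_G\,\Psi_t]\ge0$ for all $G$ in a countable generating $\pi$-system of $\bigvee_{s<t}\cF^0_s=\cF^0_t$. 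Here $\Psi_t$ denotes the integrand. Integrability of $\Psi_t$ comes from the boundedness of the coefficients and from the square-integrability of $p,q,r,P,\mathfrak P$, via the well-posedness results of the appendix. A monotone class argument then extends the inequality to all $G\in\cF^0_t$.

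\textbf{Conclusion.} This yields $\bar\E[\Psi_t\mid \cF^0_t]\ge0$ $\bar\P$-a.s., which is the second formulation of the theorem. Finally, I would identify this conditional expectation with $\E^1[\hat\E[\Psi_t]]$, viewed as a function of $\omega^0$, by Fubini on the product $\Omega^0\times\Omega^1\times\hat\Omega$. Two facts are needed here. First, $\Psi_t$ is $\cF^0_T\otimes\cF^1\otimes\hat\cF$-measurable. Second, integrating out $\omega_1,\omega_2$ gives an $\cF^0_t$-measurable quantity, because all processes are adapted and $W^0$ has independent increments, so the future of $W^0$ is independent of the time-$t$ data. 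Together these give the first formulation $\P^0$-a.s.
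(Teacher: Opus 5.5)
Your route is the paper's. You use a spike of $u$ on $[t_0,t_0+\epsilon]$ restricted to a common-noise event; the paper takes $A=A_0\times A_1\in\cF_{t_0}$ and finally $A_1=\Omega^1$, which is your $G$. You then apply Lebesgue differentiation in \eqref{eq:Final Expansion of Cost Functional} and Fubini on $\Omega^0\times\Omega^1\times\hat\Omega$. You go further than the paper in noticing that the set of good $t_0$ depends on the event. However, the device you use to remove this dependence fails as written, because a monotone class (Dynkin) argument cannot transport an inequality. The class $\{G:\bar\E[\mathbf 1_G\Psi_t]\ge 0\}$ is not closed under proper differences, so it is not a $\lambda$-system. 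A finite signed measure that is nonnegative on a generating $\pi$-system need not be nonnegative. For example, on $\{1,2,3\}$ with point masses $-1,2,0$, every set of the $\pi$-system $\{\emptyset,\{2\},\{1,2\},\{2,3\},\{1,2,3\}\}$ has nonnegative mass, yet $\{1\}$ has mass $-1$.

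The fix is to work with an algebra instead. Let $\mathcal A_s$ be the countable algebra generated by the events $\{W^0_r\in I\}$, with $r\in\mathbb Q\cap[0,s]$ and $I$ a box with rational corners. Every element of $\mathcal A_s$ lies in $\cF^0_s$, so it is an admissible spike event for every $t_0\ge s$. Your countable union of exceptional time sets, taken over all pairs $(s,G)$ with $G\in\mathcal A_s$, then gives the following for a.e. $t$: $\nu_t(G):=\bar\E[\mathbf 1_G\Psi_t]\ge 0$ for all $G$ in the algebra $\bigcup_{s\in\mathbb Q,\,s\le t}\mathcal A_s$. This algebra generates $\cF^0_{t-}=\cF^0_t$ up to null sets. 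For arbitrary $G\in\cF^0_t$ and $\delta>0$, the approximation theorem for the finite measure $G\mapsto\bar\E[\mathbf 1_G|\Psi_t|]$ yields $G'$ in the algebra with $\bar\E[\mathbf 1_{G\triangle G'}|\Psi_t|]<\delta$, hence $\nu_t(G)\ge-\delta$.

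Two smaller corrections. First, $\hat\beta=\beta$ is false, since $\hat\beta=u\mathbf 1_G+\hat\alpha\mathbf 1_{G^c}$. What you need, and what holds, is $\widehat{\mathbf 1_G}=\mathbf 1_G$, which gives $\delta C(t)\delta\hat C^*(t)=\mathbf 1_G\mathbf 1_{E_\epsilon}(t)\,\delta C^u_t(\delta\hat C^u_t)^*$. Second, the $\cF^0_t$-measurability of $\E^1[\hat\E[\Psi_t]]$ follows from adaptedness of $\Psi$ and Fubini on $\cF^0_t\otimes\cF^1_t\otimes\hat{\cF}_t$ alone; independent increments of $W^0$ play no role.
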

    \begin{proof}
        For $t_0\in[0,T]$, $A=A_0\times A_1\in \cF_{t_0}$ and $E_\epsilon=[t_0,t_0+\epsilon]$ choose the spike variation
        \begin{equation*}
            \alpha_t^{\epsilon}(\omega) =\begin{cases}
            u,&\text{for } t \in [t_0,t_0+\epsilon],\,\omega\in A, \\
            \alpha_t(\omega),&\text{else}  .
            \end{cases}
        \end{equation*}
        Therefore the above Proposition \ref{proposition:expansion of cost functional} gives, due to the optimality of $\alpha$,
        \begin{align*}
            0
            \leq & J(\alpha^{\epsilon})-J(\alpha)\\
            = & \bar{\E} \left[ \mathbf{1}_A \int_{t_0}^{t_0+\epsilon} \delta H^u_t
            +\frac{1}{2}\big\langle P_t,
            \delta B^u_t(\delta B^u_t)^*
            +\delta C^u_t(\delta C^u_t)^*\big\rangle
            +\widehat{\mathbf{1}_{A_1}}\frac{1}{2}\langle \mathfrak{P}_t ,\delta C^u_t(\delta \hat{C}^u_t)^*\rangle\dt\right] +o(\epsilon).
        \end{align*}
        Dividing by $\epsilon$ and letting $\epsilon\to 0$ gives, by Lebesgue differentiation, for almost every $t_0\in[0,T]$
        \begin{align*}
            0
            \leq & \bar{\E} \left[  \mathbf{1}_A \left(\delta H^u_{t_0}
            +\frac{1}{2}\big\langle P_{t_0},
            \delta B^u_{t_0}(\delta B^u_{t_0})^*
            +\delta C^u_{t_0}(\delta C^u_{t_0})^*\big\rangle
            +\widehat{\mathbf{1}_{A_1}}\frac{1}{2}\langle \mathfrak{P}_{t_0} ,\delta C^u_{t_0}(\delta \hat{C}^u_{t_0})^*\rangle\right)\right] .
        \end{align*}
        Now, using Fubini to split up the integrals over $\Omega^0,\Omega^1$ and $\hat{\Omega}$, we get
        \begin{align*}
            0
            \leq  & \E^0 \Bigg[  \mathbf{1}_{A_0} \E^1\bigg[\hat{\E}\Big[\mathbf{1}_{A_1}\big(\delta H^u_{t_0}
            +\frac{1}{2}\big\langle P_{t_0},
            \delta B^u_{t_0}(\delta B^u_{t_0})^*
            +\delta C^u_{t_0}(\delta C^u_{t_0})^*\big\rangle
            +\widehat{\mathbf{1}_{A_1}}\frac{1}{2}\langle \mathfrak{P}_{t_0} ,\delta C^u_{t_0}(\delta \hat{C}^u_{t_0})^*\rangle\big)\Big]\bigg]\Bigg] .
        \end{align*}
        Thus, $\P^0$-almost surely
        \begin{align*}
            0
            \leq  & \E^1\left[\hat{\E}\left[\mathbf{1}_{A_1}\left(\delta H^u_{t_0}
            +\frac{1}{2}\big\langle P_{t_0},
            \delta B^u_{t_0}(\delta B^u_{t_0})^*
            +\delta C^u_{t_0}(\delta C^u_{t_0})^*\big\rangle
            +\widehat{\mathbf{1}_{A_1}}\frac{1}{2}\langle \mathfrak{P}_{t_0} ,\delta C^u_{t_0}(\delta \hat{C}^u_{t_0})^*\rangle\right)\right]\right].
        \end{align*}
        Choosing $A_1=\Omega^1$ gives the result.
    \end{proof}
    \begin{remark}[Relation with the dynamic programming approach] \label{remark:relationToDPP}
        In the classical Markovian stochastic control problem given a controlled diffusion
        \begin{equation*}
            \mathrm{d}X^{x,t}_s=A(s,X_s,\alpha_s)\ds+B(s,X_s,\alpha_s)\dW_s,\quad X_s=x
        \end{equation*}
        the stochastic maximum principle and the dynamic programming principle are formally connected through
        the derivatives of the value function. More precisely, define
        \begin{equation*}
            J(t,x;\alpha):=\mathbb{E}\left[
            \int_t^T f(s,X^{x,t}_s,\alpha_s)\ds
            + g(X_T^{x,t})
            \right]
        \end{equation*}
        and the value function $V(t,x):=\min_{\alpha\in\A} J(t,x;\alpha)$. If $V$ is sufficiently smooth, $X$ is optimal and $\alpha$ is an optimal control, one has (up to sign convention)
        \begin{align*}
            -V_t(t,X_t)
            &=
            \cH_{\mathrm{cl}} \left(t,X_t,\alpha_t,V_x(t,X_t),V_{xx}(t,X_t)\right)
            \\
            &= \min_{u\in U} \cH_{\mathrm{cl}} \left(t,X_t,u,V_x(t,X_t),V_{xx}(t,X_t)\right),
        \end{align*}
        for a.e. $t\in[0,T]$, almost surely, where the classical extended Hamiltonian is given by
        \begin{align*}
            &\cH_{\mathrm{cl}}(t,x,u,p,P) 
            :=\frac{1}{2}\langle P,B(t,x,u)B^*(t,x,u)\rangle
            +\langle p,A(t,x,u)\rangle
            +f(t,x,u).
        \end{align*}
        Further, the first and second adjoint processes can be identified with
        \begin{equation*}
            p_t = V_x(t,X_t),\qquad
            P_t = V_{xx}(t,X_t).
        \end{equation*}
        This is the usual formal bridge between Peng's stochastic maximum principle and the HJB equation (cf. \citet{Yong1999} Chapter 4).\\
        In the present conditional McKean--Vlasov setting (leaving out the time-dependence of the coefficients as in \cite{PhamWei2017DPPforCommonNoise}) the state variable in the dynamic programming formulation is the conditional law
        \begin{equation*}
            \mu_t = \cL(X_t\mid \cF^0_t).
        \end{equation*}
        Consequently, the corresponding value function $v=v(t,\mu)$ is a function on the Wasserstein
        space with Bellman equation (cf. \cite{PhamWei2017DPPforCommonNoise} equation (4.7))
        \begin{align*}
            -v_t(t,\mu)
            =&\inf_{a\in U}
            \left(\widehat f(\mu,a)
            +\mu\left(L^a v(t,\mu)\right)
            +\mu\otimes\mu\left(M^a v(t,\mu)\right)\right),\\
            v(T,\mu)=&\widehat g(\mu),
        \end{align*}
        where
        \begin{equation*}
            \widehat f(\mu,a)
            :=\int_{\mathbb R^d} f(x,\mu,a)\,\mu(\mathrm{d}x),\quad
            \widehat g(\mu)
            :=\int_{\mathbb R^d} g(x,\mu)\,\mu(\mathrm{d}x),
        \end{equation*}
        and
        \begin{align*}
            L^a\varphi(\mu)(x)
            &:=
            \left\langle\varphi_\mu(\mu)(x), A(x,\mu,a) \right\rangle
            +\frac{1}{2}\langle \varphi_{y\mu}(\mu)(x), \left(BB^*+CC^*\right)(x,\mu,a)\rangle 
        \end{align*}
        and
        \begin{equation*}
            M^a\varphi(\mu)(x,x')
            :=\frac{1}{2}\langle \varphi_{\mu\mu}(\mu)(x,x'), C(x,\mu,a)C^*(x',\mu,a)\rangle.
        \end{equation*}       
        Defining
        \begin{align*}
            \cH(t,\mu,a,\varphi,\Phi,\Psi)
            :=&\int_{\mathbb R^d} \frac{1}{2}\langle \Phi, \left( BB^*+CC^* \right)(x,\mu,a)\rangle 
            +\left\langle \varphi,A(x,\mu,a)\right\rangle+f(x,\mu,a)\,\mu(\mathrm{d}x)\\
            &+\frac{1}{2}\int_{\mathbb R^d}\int_{\mathbb R^d} \langle \Psi, C(x,\mu,a)C^*(x',\mu,a)\rangle \,\mu(\mathrm{d}x)\mu(\mathrm{d}x'),
        \end{align*}
        the above equation can be rewritten as
        \begin{align*}
            -v_t(t,\mu)
            &=\inf_{a\in U} \cH\left(t,\mu,a,v_\mu(t,\mu),v_{y\mu}(t,\mu),v_{\mu\mu}(t,\mu)\right),\\
            v(T,\mu)&=\widehat g(\mu).
        \end{align*}
        Therefore, we expect, for an optimal $\mu_t$ and $\alpha_t$, to get
        \begin{align*}
            -v_t(t,\mu_t)
            &=\mathcal \cH\left(t, \mu_t, \alpha_t, v_\mu(t,\mu_t), v_{y\mu}(t,\mu_t), v_{\mu\mu}(t,\mu_t)\right)\\
            &=\min_{u\in U}\mathcal \cH\left(t, \mu_t, u, v_\mu(t,\mu_t), v_{y\mu}(t,\mu_t), v_{\mu\mu}(t,\mu_t)\right),
        \end{align*}
        for a.e. $t\in[0,T]$, and $\P^0$-a.s. as $\mu_t$ is only random with respect to $\Omega^0$. This supports our maximum principle, which is also only almost sure with respect to $\Omega^0$.\\
        Further, we now expect
        \begin{equation*}
            p_t
            =v_\mu(t,\mu_t)(X_t),\quad
            P_t
            =v_{y\mu}(t,\mu_t)(X_t),\quad\text{and}\quad 
            \mathfrak{P}_t
            =v_{\mu\mu}(t,\mu_t)(X_t,\hat{X}_t),
        \end{equation*}
        where $\hat{X}$ denotes a conditionally independent copy of $X$ given the
        common noise. Thus the second derivative of the lifted value function is not
        represented by a single matrix-valued process. It decomposes into a local part,
        corresponding to $v_{y\mu}$, and a non-local part, corresponding
        to $ v_{\mu\mu}$. The latter is precisely the object represented in our
        maximum principle by the second-level second-order adjoint process $\mathfrak{P}$.\\
        A heuristic argument shows that this is the right identification. We take Lions derivatives of
        $\widehat g$, which is the terminal condition of the Bellman equation above. The first one is given by
        \begin{equation*}
            \widehat g_\mu(\mu)(y)
            =g_x(y,\mu) 
            +\int_{\mathbb R^d}g_\mu(x,\mu)(y)\,\mu(\mathrm{d}x).
        \end{equation*}
        Consequently, along the optimal conditional law
        $\mu_T=\mathcal L(X_T\mid\mathcal F_T^0)$, one obtains
        \begin{equation*}
            v_\mu(T,\mu_T)(X_T)
            =g_x(X_T,\mu_T)
            +\tilde{\E}\left[g_\mu(\tilde{X}_T,\mu_T)(X_T)
            \right],
        \end{equation*}
        which is precisely the terminal condition of the first adjoint process $p$.\\
        Differentiating once more with respect to the lifted variable $y$, we obtain
        \begin{equation*}
            \widehat g_{y\mu}(\mu)(y)
            =g_{xx}(y,\mu)
            +\int_{\mathbb R^d} g_{y\mu}(x,\mu)(y)\, \mu(\mathrm{d}x).
        \end{equation*}
        Hence,
        \begin{equation*}
            v_{y\mu}(T,\mu_T)(X_T)
            =g_{xx}(X_T,\mu_T)
            +\tilde{\E}\left[g_{y\mu}(\tilde{X}_T,\mu_T)(X_T)\right],
        \end{equation*}
        which coincides with the terminal condition of the first-level second-order
        adjoint process $P$.\\
        Finally, the two-times Lions derivative of $\widehat g$ is
        \begin{align*}
            \widehat g_{\mu\mu}(\mu)(y,\hat{y})
            =&g_{\mu x}(y,\mu)(\hat{y})
            +\int_{\mathbb R^d} g_{\mu\mu}(x,\mu)(y,\hat y)\,\mu(\mathrm{d}x)
            + g_{x\mu}(\hat{y},\mu)(y)\\
            =& \int_{\mathbb R^d} g_{\mu\mu}(x,\mu)(y,\hat y)\,\mu(\mathrm{d}x) 
            +g_{x\mu}(\hat y,\mu)(y)
            +g_{x\mu}(y,\mu)(\hat y)^*.
        \end{align*}
        Therefore, evaluating at $(y,\hat y)=(X_T,\hat{X}_T)$, we get
        \begin{align*}
            v_{\mu\mu}(T,\mu_T)(X_T,\hat{X}_T)
            &=  \tilde{\E}\left[
                g_{\mu\mu}(\tilde{X}_T,\mu_T)
                (X_T,\hat{X}_T)\right]
            +g_{x\mu}(\hat{X}_T,\mu_T)(X_T)
            +g_{x\mu}(X_T,\mu_T)(\hat{X}_T)^* .
        \end{align*}
        This is exactly the terminal condition of the second-level second-order adjoint process $\mathbb P$.
    \end{remark}\noindent
    We leave a rigorous treatment of these connections, especially in the non-smooth setting, to later works.

\section{A Simple Example}\label{sec:An easy Example}
    To make the derivation of Peng's maximum principle more clear, we look at an easy example, focusing on the key points.
    Take 
    \begin{equation} \label{eq:state_example}
        \mathrm{d}X_t=A(\alpha_t)\dt+B(\alpha_t)\dW^1_t+C(\alpha_t)\dW^0_t,\quad X_0=x_0\in L^2(\Omega^1,\R^d)
    \end{equation}
    with cost functional 
    \begin{equation}\label{eq:cost_example}
        J(\alpha)
        = \mathbb{E}\left[
            \int_0^T f(\mu_t,\alpha_t)\dt
            + g(\mu_T)
        \right],
    \end{equation}
    where $\mu_t=\cL(X_t\mid \cF^0_t)$. Here, $X$ is not even defined by a true SDE but really just by integrals over control dependent functions.
    Under the same spike variation as introduced in Section \ref{sec:The Variational Equations} we get the first-order variational equation
    \begin{equation}
        \begin{aligned}
            \mathrm{d}Y_t 
            =& \delta A(t) \dt 
            + \delta B(t) \dW^1_t
            + \delta C(t) \dW^0_t,\quad 
            Y_0 = 0,
        \end{aligned}\label{eq:first variational process SDE_example}
    \end{equation}
    which again is not even a true SDE, and the second-order variational equation is even 
    \begin{equation}
        \begin{aligned}
            \mathrm{d}Z_t =0,\quad Z_0=0.
        \end{aligned}\label{eq:second variational process SDE_example}
    \end{equation}
    These are the (Taylor) approximations of $X-X^\epsilon$. In this particular case $\Delta X= Y$ and $Z\equiv 0$ so that the estimations from Lemma \ref{lemma:Estimates on Y and Z} and \ref{lemma:estimate on DeltaX-Y-Z}, i.e. 
    \begin{equation*}
        \mathbb{E}\left[\sup _{t \in[0, T]}\left\|\Delta X_t-Y_t\right\|^{2 k}\right] \in o(\epsilon^{2k}).
    \end{equation*}
    hold trivially.\\
    Now, we can expand our cost functional (according to Lemma \ref{lemma:Taylor formula for Lions Derivative}) to deduce a necessary condition for optimality of an $\alpha$. Even though, $Z\equiv 0$ and thus only a first-order approximation of $X$ is done, we still need to use a second-order Taylor expansion for the cost functional, as we still only get $\mathbb{E}[\sup _{t \in[0, T]}\left\|\Delta X_t\right\|^{2 k}] \in O(\epsilon^k)$, so the remainder of the Taylor expansion is only of order $o(\epsilon)$ for an at least second-order expansion.\\
    Since $Y=\Delta X$, we substitute it and get
    \begin{align}
        0\leq J(\alpha^{\epsilon})-J(\alpha)
        = & \E\Bigg[ \int_0^T \tilde{\mathbb{E}}\left[f_\mu(\theta_t)(\tilde{X}_t)[\tilde{Y}_t]\right]
        +\delta f(t)\nonumber\\
        &+\frac{1}{2}\tilde{\mathbb{E}}\left[f_{y\mu}(\theta_t)(\tilde{X}_t)[\tilde{Y}_t,\tilde{Y}_t]\right]
        +\frac{1}{2}\tilde{\tilde{\E}} \left[\tilde{\mathbb{E}} \left[f_{\mu\mu}(\theta_t)(\tilde{X}_t,\tilde{\tilde{X}}_t)[\tilde{Y}_t,\tilde{\tilde{Y}}_t]\right]\right] \dt\nonumber\\
        & +\tilde{\mathbb{E}}\left[g_\mu(\mu_T)(\tilde{X}_T)[\tilde{Y}_T]\right] \label{eq:First Duality Replacement_example}\\
        & +\frac{1}{2}\tilde{\mathbb{E}}\left[g_{y\mu}(\mu_T)(\tilde{X}_T)[\tilde{Y}_T,\tilde{Y}_T]\right] \label{eq:second Duality Replacement_example}
        \\
        &+\frac{1}{2}\tilde{\tilde{\E}} \left[\tilde{\mathbb{E}} \left[g_{\mu\mu}(\mu_T)(\tilde{X}_T,\tilde{\tilde{X}}_T)[\tilde{Y}_T,\tilde{\tilde{Y}}_T]\right]\right]\Bigg]+o(\epsilon), \label{eq:third Duality Replacement_example}
    \end{align}
    where $\tilde{X},\tilde{Y}, \tilde{\tilde{X}},\tilde{\tilde{Y}}$ denote (conditionally on $\cF^0$) independent copies (in the sense of Section \ref{sec:The Variational Equations}).\\
    This already constitutes a necessary condition for optimality of $\alpha$, but the condition is not explicit, as $Y$ implicitly depends on the spike variation $\alpha^\epsilon$. To check this condition for many different spike variations, one would have to solve the equation for $Y$ every single time. Further, we would like to have a local or infinitesimal condition, i.e. for fixed $t\in[0,T]$. Thus, we aim to replace all the terms containing $Y$ by so-called dualization.
    This is where the adjoint processes come into play. Their terminal values need to be chosen in such a way that they appear in the above expansion in a dualization with $Y$ or a quadratic term of $Y$ and at the same time their time evolution needs to be chosen in such a way that this replacement also cancels the unwanted terms, hence requiring backward SDEs.\\
    The problem in the case of distribution dependence is that some of the above terms contain (conditionally) independent copies of $Y$, which makes a dualization more difficult. Here, the second-level second-order adjoint equation comes into play, as it is defined on a bigger probability space (as defined in Section \ref{subsec:Lifting and Third Adjoint}) that allows for these copies of $Y$.
    \begin{remark} \label{remark:example_ConditionalExpNotOfHigherOrder}
        Notice that due to the conditional law we cannot argue in a similar fashion to \cite{Buckdahn2016,guatteri2025pengsmaximumprinciplestochastic,chen2026transpositionapproachoptimalcontrol} that the terms containing two (conditionally) independent copies of $Y$ are of lower order, as even for deterministic controls
        \begin{equation*}
            \E[Y_T\mid \cF^0_T]\sim \int_{E_\epsilon} \delta C(t)\dW^0_s\notin o(\epsilon^{\frac{1}{2}}).
        \end{equation*}
    \end{remark}\noindent
    Towards the above mentioned dualization, we define the first-order adjoint process by
    \begin{equation}
        \begin{aligned}
            \mathrm{d}p_t 
            =& - \tilde{\E}\left[f^*_\mu(\tilde{\theta}_t)(X_t)\right] \dt  + q_t \dW^1_t+ r_t \dW^0_t, \\
            p_T &= \tilde{\E}[g_\mu(\mu_T)(X_T)],
        \end{aligned}\label{eq:First order adjoint_example}
    \end{equation}
    the first-level second-order adjoint process by
    \begin{equation}
        \begin{aligned}
            \mathrm{d}P_t =& -\tilde{\E}[ f_{y\mu}(\tilde{\theta}_t)(X_t) ]\dt
            + Q_t \dW^1_t
            + R_t \dW^0_t, \\
            P_T =& \tilde{\E}[g_{y\mu}(\mu_T)( X_T)]
        \end{aligned}
        \label{eq:First-Level second-order adjoint equation_example}
    \end{equation}
    and the second-level second-order adjoint process by
    \begin{equation}
        \begin{aligned}
            \mathrm{d}\mathfrak{P}_t
            =&-\tilde{\E} \left[f_{\mu\mu}(\tilde{\theta}_t)(X_t,\hat{X}_t)\right]\dt
            + \mathfrak{Q}^{(1)}_t \dW^1_t
            +\mathfrak{Q}^{(2)}_t \,\mathrm{d}\hat{W}^1_t
            +\mathfrak{R}_t \dW^0_t,\\
            \mathfrak{P}_T=&\tilde{\E}[g_{\mu \mu}(\mu_T)( X_T,\hat{X}_T)],
        \end{aligned}\label{eq:Second-Level Second-Order Adjoint_example}
    \end{equation}
    where again $\hat{X}$ denotes a (conditionally on $\cF^0$) independent copy of $X$ (in the sense of Section \ref{subsec:Lifting and Third Adjoint}).\\
    By Itô's formula, this results in 
    \begin{equation}
        \begin{aligned}
            &\bar{\E}\left[\tilde{\mathbb{E}}\left[g_\mu(\mu_T)(\tilde{X}_T)[\tilde{Y}_T]\right]\right]\\
            =&\bar{\E}\left[\langle p_T, Y_T\rangle\right]\\
            =& \bar{\E}\bigg[\int_0^T - \tilde{\E}\left[f_\mu(\theta_t)(\tilde{X}_t)[ \tilde{Y}_t]\right]
            +\langle p_t,\delta A(t) \rangle 
            + \langle q_t,\delta B(t) \rangle
            + \langle r_t,\delta C(t) \rangle\dt\bigg],
        \end{aligned} \label{eq:duality p and Y_example}
    \end{equation}
    in 
    \begin{equation}
        \begin{aligned}
            &\bar{\E}\left[\tilde{\mathbb{E}}\left[g_{y\mu}(\mu_T)(\tilde{X}_T)[\tilde{Y}_T,\tilde{Y}_T]\right]\right]\\
            =&\bar{\E}\left[\left\langle P_T, Y_T \otimes Y_T\right\rangle\right]\\
            =&\bar{\E}\bigg[\int_0^T-\tilde{\E}[ f_{y\mu}(\tilde{\theta}_t)(X_t) ][Y_t ,Y_t]
            +\langle P_t, \delta B(t)\delta B^*(t)
            +\delta C(t)\delta C^*(t)\rangle \dt\bigg]+o(\epsilon),
        \end{aligned}\label{eq:first second duality_example}
    \end{equation}
    and, using Fubini, in 
    \begin{equation}
        \begin{aligned}
            &\bar{\E}\left[\tilde{\tilde{\E}} \left[\tilde{\mathbb{E}} \left[g_{\mu\mu}(\mu_T)(\tilde{X}_T,\tilde{\tilde{X}}_T)[\tilde{Y}_T,\tilde{\tilde{Y}}_T]\right]\right]\right]\\
            =&\bar{\E}\left[\left\langle \mathfrak{P}_T, Y_T \otimes \hat{Y}_T\right\rangle\right]\\
            =&\bar{\E}\bigg[\int_0^T-\tilde{\E} \left[f_{\mu\mu}(\tilde{\theta}_t)(X_t,\hat{X}_t)[ Y_t,\hat{Y}_t]\right]+\langle \mathfrak{P}_t ,\delta C(t)\delta \hat{C}^*(t)\rangle \dt\bigg] +o(\epsilon).
        \end{aligned}\label{eq:second second duality_example}
    \end{equation}
    Thus, we replace \eqref{eq:First Duality Replacement_example}, \eqref{eq:second Duality Replacement_example} and \eqref{eq:third Duality Replacement_example} by \eqref{eq:duality p and Y_example}, \eqref{eq:first second duality_example} and \eqref{eq:second second duality_example}, giving
    \begin{align*}
        0\leq & J(\alpha^{\epsilon})-J(\alpha )\\
        = & \bar{\E} \left[ \int_0^T \delta H(t)
        +\frac{1}{2}\big\langle P_t,
        \delta B(t)\delta B(t)^*
        +\delta C(t)\delta C^*(t)\big\rangle
        +\langle \mathfrak{P}_t ,\delta C(t)\delta \hat{C}^*(t)\rangle \dt\right] +o(\epsilon),
    \end{align*}
    i.e. the desired explicit necessary condition, which can be specified to the maximum principle from Theorem \ref{Theorem:maximum principle}.

\section*{Acknowledgments}
WS and JBS acknowledge support from DFG CRC/TRR 388 'Rough Analysis, Stochastic Dynamics and Related Fields', Projects A10 and B09.

\appendix
\section{Taylor's formula for $C^{2,1}$-Functions} \label{appendix:Taylor formula for x and mu}
    A second-order Taylor expansion will be needed. It works under the assumptions given in Section \ref{sec:Preliminaries}. The first expansion, taken from \citet{BuckdahnPeng2017}, is for purely measure-dependent functions.
    \begin{lemma}[\cite{BuckdahnPeng2017} Lemma 2.1] \label{lemma:Taylor formula for Lions Derivative}
        Let $\varphi \in C_b^{2,1}(\mathcal{P}_2(\mathbb{R}^d),\R)$. Then, for any given $X \in L^2(\Omega, \mathbb{R}^d)$ we have the following second-order expansion, for all $Y \in L^2(\Omega, \mathbb{R}^d)$,
        \begin{equation*}
            \begin{aligned}
                \varphi(\cL(Y))- & \varphi(\cL(X)) \\
                = & \E\left[\varphi_{\mu}(\cL(X))( X) [\eta]\right]
                +\frac{1}{2} \E\left[\tilde{\E}\left[\varphi_{\mu\mu}(\cL(X))(\tilde{X}, X)[\tilde{\eta},\eta]\right]\right] \\
                & +\frac{1}{2} \E\left[\varphi_{y\mu}(\cL(X))( X) [\eta , \eta]\right]+R(\cL(Y), \cL(X)),
            \end{aligned}
        \end{equation*}
        where $\eta:=Y-X$. Further, for all $Y \in L^2(\Omega, \mathbb{R}^d)$, the remainder satisfies the estimate
        \begin{equation*}
            \begin{aligned}
                |R(\cL(Y), \cL(X))| & \leq C((\E[|Y-X|^2])^{3 / 2}+\E[|Y-X|^3]) \\
                & \leq C \E[|Y-X|^3],
            \end{aligned}
        \end{equation*}
        and the constant $C \in \mathbb{R}_{+}$ only depends on the Lipschitz constants of $\varphi_{\mu\mu}$ and $\varphi_{y\mu}$.
    \end{lemma}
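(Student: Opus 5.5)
The plan is to lift the measure-only function to the Hilbert space $L^2(\Omega,\R^d)$ and perform a one-dimensional Taylor expansion along the segment $X^\lambda:=X+\lambda\eta$, $\lambda\in[0,1]$. Set $h(\lambda):=\varphi(\cL(X^\lambda))$. By Lions differentiability and the chain rule along $\lambda\mapsto X^\lambda$ in $L^2$, we have $h'(\lambda)=\E[\varphi_\mu(\cL(X^\lambda))(X^\lambda)[\eta]]$. Differentiating once more uses the two partial derivatives of $(\mu,y)\mapsto\varphi_\mu(\mu)(y)$. The $\mu$-dependence, via Lions differentiability of $\varphi_\mu(\cdot)(y)$ for fixed $y$ evaluated on an independent copy, gives $\E[\tilde\E[\varphi_{\mu\mu}(\cL(X^\lambda))(X^\lambda,\tilde X^\lambda)[\eta,\tilde\eta]]]$. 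The $y$-dependence gives $\E[\varphi_{y\mu}(\cL(X^\lambda))(X^\lambda)[\eta,\eta]]$. Justifying the differentiation under the expectation is routine, since $\varphi_{y\mu}$ and $\varphi_{\mu\mu}$ are bounded, so $h\in C^1$ with $h'$ absolutely continuous.

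Next I write $h(1)-h(0)=h'(0)+\int_0^1(1-\lambda)h''(\lambda)\dlambda$. The leading terms of the statement are obtained by replacing $h''(\lambda)$ with $h''(0)$ and using $\int_0^1(1-\lambda)\dlambda=\tfrac12$. Symmetry of $\varphi_{\mu\mu}$ under swapping the copies (Remark 5.89 of \cite{CarmonaDelarue2018I}) matches the order of arguments $(\tilde X,X)$ used in the statement.

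The remainder is then $R=\int_0^1(1-\lambda)(h''(\lambda)-h''(0))\dlambda$, which I estimate with the Lipschitz bounds. First, $W_2(\cL(X^\lambda),\cL(X))\le \lambda(\E|\eta|^2)^{1/2}$ and $|X^\lambda-X|=\lambda|\eta|$. For the $y\mu$ term this gives
\[
|\E[(\varphi_{y\mu}(\cL(X^\lambda))(X^\lambda)-\varphi_{y\mu}(\cL(X))(X))[\eta,\eta]]|\le L\,\E\big[(\|\eta\|_{L^2}+|\eta|)|\eta|^2\big]\le L\big(\|\eta\|_{L^2}^3+\E|\eta|^3\big).
\]
For the $\mu\mu$ term the difference is bounded by $L(\|\eta\|_{L^2}+|\eta|+|\tilde\eta|)|\eta||\tilde\eta|$. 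Taking expectations and using independence of the copies, the bound is $L(\|\eta\|_{L^2}^3+2\E|\eta|^2\,\E|\eta|)\le 3L\|\eta\|_{L^2}^3$ by Cauchy--Schwarz.

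Finally, Jensen (or Lyapunov) gives $(\E|\eta|^2)^{3/2}\le\E|\eta|^3$, which yields the second form of the bound. The constant depends only on the Lipschitz constants of $\varphi_{y\mu}$ and $\varphi_{\mu\mu}$.

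The main obstacle is the rigorous computation of $h''$: interchanging the Lions derivative of $\varphi_\mu(\cdot)(y)$ with the evaluation at the random point $X^\lambda$. I would handle this by a direct difference-quotient argument. I split $\varphi_\mu(\cL(X^{\lambda+s}))(X^{\lambda+s})-\varphi_\mu(\cL(X^\lambda))(X^\lambda)$ into a measure increment at the fixed point $X^{\lambda+s}$ and a point increment at the fixed measure. Each is expanded by the first-order Taylor formula with integral remainder, with the continuity of the derivatives absorbing the errors, and dominated convergence then passes to the limit using boundedness. Alternatively, one can cite \cite{BuckdahnPeng2017}.
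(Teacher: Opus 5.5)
Your proof is correct. The paper gives no proof of this lemma and simply cites \cite{BuckdahnPeng2017}; your argument is the standard one behind that result: a second-order Taylor expansion of $\lambda\mapsto\varphi(\cL(X+\lambda\eta))$ with integral remainder, where $h''$ is computed by splitting into a measure increment and a point increment, and the remainder is bounded using the Lipschitz constants of $\varphi_{\mu\mu}$ and $\varphi_{y\mu}$. One small simplification: you do not need the symmetry of $\varphi_{\mu\mu}$ to match the argument order $(\tilde X,X)$ in the statement, because $(X,\eta)$ and $(\tilde X,\tilde\eta)$ are exchangeable under $\E\tilde\E$, so relabeling the copies suffices.
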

    We generalize the above Taylor expansion to the setting where the function might also depend on another variable in $\R^d$.
    \begin{lemma} \label{lemma:Taylor formula for x and mu dependence}
        Let $\varphi \in C_b^{2,1}(\R^d\times \mathcal{P}_2(\mathbb{R}^d),\R)$. Then, for any given $x\in\R^d$ and $X \in L^2(\Omega, \mathbb{R}^d)$ we have the following second-order expansion, for all $y\in\R^d$ and $Y\in L^2(\Omega,\R^d)$,
        \begin{equation*}
            \begin{aligned}
                &\varphi(y,\cL(Y))- \varphi(x,\cL(X)) \\
                = & \varphi_x(x,\cL(X))[y-x]
                +\E\left[\varphi_\mu(x,\cL(X))(X)[Y-X]\right]
                +\frac{1}{2}\varphi_{xx}(x,\cL(X))[y-x,y-x]\\
                &+\frac{1}{2} \E\left[\tilde{\E}\left[ \varphi_{\mu\mu}(x,\cL(X))(\tilde{X}, X)[ \tilde{Y}-\tilde{X},Y-X]\right]
                +\varphi_{y\mu}(x,\cL(X))(X)[Y-X,Y-X]\right]\\
                &+\E\left[\varphi_{x\mu}(x,\cL(X))(X)[Y-X,y-x]\right]+R(x,y,\cL(X), \cL(Y)).
            \end{aligned}
        \end{equation*}
        Furthermore, for all $x,y\in \R^d$ and $X,Y \in L^2(\Omega, \mathbb{R}^d)$, the remainder satisfies the estimate
        \begin{equation*}
            \begin{aligned}
                |R(x,y,\cL(X), \cL(Y))| \leq C\left( |y-x|^3
                + \E[|Y-X|^3]\right),
            \end{aligned}
        \end{equation*}
        and the constant $C \in \mathbb{R}_{+}$ only depends on the Lipschitz constants of $\varphi_{xx}$, $\varphi_{x\mu}$, $\varphi_{\mu\mu}$ and $\varphi_{y\mu}$.
    \end{lemma}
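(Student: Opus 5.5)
The plan is to reduce the statement to Lemma \ref{lemma:Taylor formula for Lions Derivative} together with the classical Taylor formula in $\R^d$, by splitting the increment into a pure $x$-increment and a pure $\mu$-increment:
\begin{equation*}
    \varphi(y,\cL(Y))-\varphi(x,\cL(X))
    =\big[\varphi(y,\cL(Y))-\varphi(x,\cL(Y))\big]+\big[\varphi(x,\cL(Y))-\varphi(x,\cL(X))\big].
\end{equation*}
We work with the lift $\tilde\varphi(z,\xi):=\varphi(z,\cL(\xi))$ on $\R^d\times L^2(\Omega,\R^d)$ throughout.

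For the second bracket, $x$ is frozen and $\varphi(x,\cdot)\in C^{2,1}_b(\cP_2(\R^d),\R)$ by Assumption (ii) of the definition. Lemma \ref{lemma:Taylor formula for Lions Derivative} then gives exactly the terms $\E[\varphi_\mu(x,\cL(X))(X)[Y-X]]$, $\frac12\E[\tilde\E[\varphi_{\mu\mu}\ldots]]$ and $\frac12\E[\varphi_{y\mu}\ldots]$. The remainder is bounded by $C\E[|Y-X|^3]$, with $C$ depending only on the Lipschitz constants of $\varphi_{\mu\mu}(x,\cdot)$ and $\varphi_{y\mu}(x,\cdot)$. These constants are uniform in $x$ by part (iv) of the definition.

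For the first bracket, $\cL(Y)$ is frozen and we use the ordinary second-order Taylor formula in $x$ with integral remainder. This gives $\varphi_x(x,\cL(Y))[y-x]+\frac12\varphi_{xx}(x,\cL(Y))[y-x,y-x]+O(|y-x|^3)$, where the last bound uses the Lipschitz continuity of $\varphi_{xx}$ in $x$.

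It remains to move the base measure from $\cL(Y)$ back to $\cL(X)$. For the Hessian term, the Lipschitz continuity of $\varphi_{xx}$ in $\mu$ gives an error at most $C|y-x|^2W_2(\cL(X),\cL(Y))$. For the gradient term we expand once more in the measure, using $\partial_x\varphi(x,\cdot)\in C^{1,1}_b$:
\begin{equation*}
    \varphi_x(x,\cL(Y))-\varphi_x(x,\cL(X))=\int_0^1\E\big[\varphi_{\mu x}(x,\cL(X^\lambda))(X^\lambda)[Y-X]\big]\dlambda,\qquad X^\lambda=X+\lambda(Y-X).
\end{equation*}
Paired with $y-x$, and after invoking Schwarz' theorem (Remark \ref{remark:after def of variational eq}(v)) to write $\varphi_{\mu x}$ as $\varphi_{x\mu}$, this yields the mixed term $\E[\varphi_{x\mu}(x,\cL(X))(X)[Y-X,y-x]]$. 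The error is bounded by $C|y-x|\,\E[|Y-X|^2]$, using the Lipschitz continuity of $\varphi_{x\mu}$ in both $\mu$ and $y$ together with $W_2(\cL(X^\lambda),\cL(X))^2\le\E[|Y-X|^2]$.

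Finally, all error terms are collected using $W_2(\cL(X),\cL(Y))\le \E[|Y-X|^2]^{1/2}$, Jensen's inequality $\E[|Y-X|^2]^{3/2}\le\E[|Y-X|^3]$, and Young's inequality in the form $ab^2+a^2b\le C(a^3+b^3)$. This produces $|R|\le C(|y-x|^3+\E[|Y-X|^3])$.

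The main obstacle is the mixed term. One must check that the differentiability of $\varphi_x(x,\cdot)$ in the measure is the same object as $\varphi_{x\mu}$ (Schwarz), and that its Lipschitz constant is uniform in $x$, so that the cross-term error is genuinely of third order. The remaining steps are routine applications of the one-variable results.
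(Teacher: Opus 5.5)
Your proposal is correct and follows the paper's proof essentially step by step. It uses the same splitting into $\varphi(y,\cL(Y))-\varphi(x,\cL(Y))$ and $\varphi(x,\cL(Y))-\varphi(x,\cL(X))$, Lemma \ref{lemma:Taylor formula for Lions Derivative} for the measure increment, the classical Taylor formula in $x$ for the other, and then moves $\varphi_x$ back to $\cL(X)$ by a first-order Lions expansion and $\varphi_{xx}$ by its Lipschitz continuity in the measure. Your first-order remainder for $\varphi_x(x,\cdot)$ is written without the $(1-\lambda)$ weight that appears in the paper's version, and yours is the correct form. Your explicit bookkeeping via $W_2(\cL(X),\cL(Y))\le\E[|Y-X|^2]^{1/2}$, Jensen and Young supplies the remainder estimate that the paper only calls clear.
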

    \begin{proof}
        Denote $\mu=\cL(X)$ and $\nu=\cL(Y)$ and decompose
        \begin{align*}
            \varphi(y, \nu)- \varphi(x,\mu)
            =\underbrace{\varphi(y,\nu)-\varphi(x,\nu)}_{I_1}+\underbrace{\varphi(x,\nu)-\varphi(x,\mu)}_{I_2}.
        \end{align*}
        Clearly, $I_2$ can be expanded by the above Lemma \ref{lemma:Taylor formula for Lions Derivative}, so we only have to treat $I_1$. For fixed $\nu$, $\varphi(\cdot,\nu)\in C^2(\R^d,\R)$. Thus, by the standard Taylor formula,
        \begin{align*}
            I_1=&\varphi_x(x,\nu)[y-x]+\frac{1}{2}\varphi_{xx}(x,\nu)[y-x,y-x]\\
            &+\int_0^1 (1-\lambda)\left(\varphi_{xx}(x+\lambda(y-x),\nu)-\varphi_{xx}(x,\nu)\right)[y-x,y-x]\,\mathrm{d}\lambda.
        \end{align*}
        Further, $\varphi_x(x,\cdot)$ is continuously Lions differentiable, so by the standard Taylor formula (cf. \cite{CarmonaDelarue2018I} (5.33) and (5.34)), using an explicit form of the remainder,
        \begin{align*}
            \varphi_x(x,\nu)[y-x]
            =&\left(\varphi_x(x,\nu)-\varphi_x(x,\mu)\right)[y-x]+\varphi_x(x,\mu)[y-x]\\
            =& \E\left[\varphi_{x\mu}(x,\mu)(X)[Y-X]\right][y-x]
            +\varphi_x(x,\mu)[y-x]\\
            &+\int_0^1(1-\lambda)\E\Big[\big(\varphi_{x\mu}(x,\cL(X+\lambda(Y-X))(X+\lambda(Y-X))\\
            &\qquad\qquad\qquad\quad-\varphi_{x\mu}(x,\mu)(X)\big)[Y-X]\Big]\,\mathrm{d}\lambda[y-x].
        \end{align*}
        And lastly, 
        \begin{align*}
            \frac{1}{2}\varphi_{xx}(x,\nu)[y-x,y-x]
            =&\frac{1}{2}\left(\varphi_{xx}(x,\nu)
            -\varphi_{xx}(x,\mu)\right)[y-x,y-x]\\
            &+\frac{1}{2}\varphi_{xx}(x,\mu)[y-x,y-x].
        \end{align*}
        Putting it all together, we get the desired formula with remainder
        \begin{align*}
            R(x,y,\mu,\nu)
            =&\tilde{R}(\mu,\nu)
            +\frac{1}{2}\left(\varphi_{xx}(x,\nu)
            -\varphi_{xx}(x,\mu)\right)[y-x,y-x]\\
            &+\int_0^1 (1-\lambda)\left(\varphi_{xx}(x+\lambda(y-x),\nu)-\varphi_{xx}(x,\nu)\right)[y-x,y-x] \\
            &\qquad+ (1-\lambda)\E\Big[\varphi_{x\mu}(x,\cL(X+\lambda(Y-X))(X+\lambda(Y-X))[Y-X,y-x]\\
            &\qquad\qquad\qquad\quad-\varphi_{x\mu}(x,\mu)(X)[Y-X,y-x]\Big]\,\mathrm{d}\lambda,
        \end{align*}
        where $\tilde{R}$ is the remainder from the above Lemma \ref{lemma:Taylor formula for Lions Derivative}. The estimation for the remainder is clear and the constants only depend on Lipschitz constants of the second-order derivatives.
    \end{proof}

\section{Existence and Uniqueness for McKean--Vlasov Equations with Common Noise} \label{appendix:Ex and Uni for Controlled MKVSDE with Common Noise}
    In this section, we will discuss the well-posedness of forward and backward SDEs given in the current paper. Under this consideration, we will work in the special setup given in Section \ref{subsec:Lifting and Third Adjoint}, which fits our purpose well and simplifies notations. We stress that these results hold more generally.\\
    Recall the setting from Section \ref{subsec:Lifting and Third Adjoint}. We take as a stochastic basis $(\Omega,\P):=\bigotimes_{i=0}^2 (\Omega^i,\mathbb{P}^i)$, where, for $i\in\{0,1\}$, $(\Omega^i,\cF^i,\mathbb{P}^i)$ is a complete filtered probability space such that $\cF^0=(\mathcal{F}^0_t)_{0\le t\le T}$ is the augmented filtration generated by a $d$-dimensional Brownian motion $W^0$, $\cF^1=(\mathcal{F}^1_t)_{0\le t\le T}$ is the augmented filtration generated by a $d$-dimensional Brownian motion $W^1$ and some independent random variable $x_0\in L^2(\Omega^1,\R^d)$ and $(\Omega^2,\cF^2,\mathbb{P}^2,W^2):=(\Omega^1,\cF^1,\mathbb{P}^1,W^1)$. On this space we consider the completed version $\cF$ of the product filtration $\bigotimes_{i=0}^2 \cF^i$. By an abuse of notation we also consider $\cF^0_t:= \cF^0_t\times\{\emptyset,\Omega^1\}\times\{\emptyset,\Omega^2\}$ as a sub-$\sigma$-algebra of $\cF_t$ and similarly for $\cF^1$ and $\cF^2$ and also consider all random variables on the product space even if they are a priori only defined on one of the components.\\
    For our notion of solutions, we define the space $\mathbb{S}^{2, m}$ of all (equivalence classes of) $(\cF_t)_{t\in[0,T]}$-progressively measurable continuous processes $\Phi:\Omega\times[0,T]\to \R^m$ satisfying $\mathbb{E}[\sup _{t\in[0,T]}|\Phi_t|^2]<\infty$, equipped with the norm $\|\Phi\|_{\mathbb{S}}^2=\mathbb{E}[\sup _{0 \leq t \leq T}|\Psi_t|^2]$ and
    the space $\Lambda^{2, m}$ of all (equivalence classes of) $(\cF_t)_{t\in[0,T]}$-progressively measurable processes $\Psi:\Omega\times[0,T]\to \R^m$ satisfying $\mathbb{E}[\int_0^T|\Psi_t|^2dt]<\infty $, equipped with the norm $\|\Psi\|_{\Lambda}^2=\mathbb{E}[\int_0^T|\Psi_t|^2dt]$.
    \subsection{Conditional McKean--Vlasov Forward Stochastic Differential Equations}
        We now first solve a conditional McKean--Vlasov SDE
        \begin{equation}\label{eq:state in appendix}
            \mathrm{d}X_t 
            = b(t,X_t,\mu_t)\dt 
            + \sigma(t,X_t,\mu_t)\dW^1_t
            +\sigma^0(t,X_t,\mu_t)\dW^0_t,
            \qquad X_0 = x_0,
        \end{equation}
        which lives solely on $\Omega^0\times \Omega^1$ and where $\mu_t:=\cL(X_t\mid \cF^0_t)$ and
        \begin{align*}
            b:&\quad\Omega^0\times \Omega^1\times[0,T]\times \R^m \times \cP_2(\R^m) \to \R^m\\
            \sigma:&\quad \Omega^0\times \Omega^1\times[0,T]\times \R^m \times \cP_2(\R^m) \to \R^{m\times d}\\
            \sigma^0:& \quad\Omega^0\times \Omega^1\times[0,T]\times \R^m \times \cP_2(\R^m) \to \R^{m\times d}
        \end{align*}
        are progressively measurable with respect to the completion of $\cF^0\otimes \cF^1$.
        \begin{definition}\label{Def:Solution to Cond MKV-SDE}
            On the probabilistic set-up $(\Omega,\cF,(\cF_t)_{t\in[0,T]}, \P)$ we call a strong solution to the conditional McKean--Vlasov SDE \eqref{eq:state in appendix} on the interval $[0, T]$ a process $X\in \mathbb{S}^{2,m}$ such that $\P$-a.s. for every $t\in[0,T]$,
            \begin{equation*}
                X_t 
                =x_0
                +\int_0^t b(s,X_s,\mu_s)\ds 
                + \int_0^t \sigma(s,X_s,\mu_s)\dW^1_s
                + \int_0^t\sigma^0(s,X_s,\mu_s)\dW^0_s,
            \end{equation*}
            where $\mu_s=\cL(X_s\mid \cF^0_s)$, such that the integrals are all well-defined.
        \end{definition}
        Note the discussion of measurability in Remark \ref{remark:after def of variational eq} such that all integrals are well-defined.
        \begin{assumption}\label{Assumption:Solution to Cond MKV-SDE}
            Let $b,\sigma,\sigma^0$ be Lipschitz and bounded with constants uniform in $t$ and $\omega$.
        \end{assumption}
        \begin{theorem} \label{theorem_appendix:ex and uni of MKV SDE w common noise}
            Under Assumption \ref{Assumption:Solution to Cond MKV-SDE}, there exists a unique strong solution to \eqref{eq:state in appendix} in the sense of Definition \ref{Def:Solution to Cond MKV-SDE}.
        \end{theorem}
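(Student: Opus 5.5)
We prove Theorem \ref{theorem_appendix:ex and uni of MKV SDE w common noise} by a Picard/contraction argument on $\mathbb{S}^{2,m}$, restricted to processes that live on $\Omega^0\times\Omega^1$ and are progressively measurable with respect to the completion of $\cF^0\otimes\cF^1$. For such a process $U$ we set $\mu^U_t:=\cL(U_t\mid\cF^0_t)$. We realise this conditional law through the lifting of Section \ref{sec:The Variational Equations}: $\mu^U_t(\omega^0)$ is the law of $\tilde U_t(\omega^0,\cdot)$ under $\P^1$. This choice is $\cF^0_t$-measurable (by Fubini, cf. Lemma 2.4 in \cite{CarmonaDelarue2018II}) and progressively measurable in $t$. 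We then define the map $\Phi(U)$ as the solution of the \emph{standard} SDE
\begin{equation*}
    X_t=x_0+\int_0^t b(s,X_s,\mu^U_s)\ds+\int_0^t\sigma(s,X_s,\mu^U_s)\dW^1_s+\int_0^t\sigma^0(s,X_s,\mu^U_s)\dW^0_s .
\end{equation*}
Once $\mu^U$ is frozen, this SDE has random coefficients that are progressively measurable in $(\omega,t)$ and Lipschitz in $x$ uniformly. Hence it has a unique strong solution in $\mathbb{S}^{2,m}$ by classical theory. Boundedness of the coefficients and BDG guarantee that $\Phi(U)$ lies in $\mathbb{S}^{2,m}$ and still depends only on $(\omega^0,\omega^1)$. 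A fixed point of $\Phi$ is exactly a strong solution in the sense of Definition \ref{Def:Solution to Cond MKV-SDE}. Conversely, any solution is a fixed point, which gives uniqueness.

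The key estimate is the conditional coupling bound
\begin{equation*}
    \W_2(\mu^U_t,\mu^V_t)^2\le \tilde{\E}\big[|\tilde U_t-\tilde V_t|^2\big]=\E\big[|U_t-V_t|^2\mid\cF^0_t\big],
\end{equation*}
which holds $\P^0$-a.s. because $(\tilde U_t,\tilde V_t)(\omega^0,\cdot)$ is a coupling of the two conditional laws. Taking expectations gives $\E[\W_2(\mu^U_t,\mu^V_t)^2]\le \E[|U_t-V_t|^2]$. Next, write $X=\Phi(U)$ and $X'=\Phi(V)$, apply It\^o's formula to $|X_t-X'_t|^2$, and use BDG, the Lipschitz property and Young's inequality, exactly as in the proof of Lemma \ref{lemma:Estimates on Y and Z}. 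This yields
\begin{equation*}
    \E\Big[\sup_{s\le t}|X_s-X'_s|^2\Big]\le c\int_0^t \E\Big[\sup_{r\le s}|X_r-X'_r|^2\Big]+\E\Big[\sup_{r\le s}|U_r-V_r|^2\Big]\ds .
\end{equation*}
Gronwall's inequality then gives $\E[\sup_{s\le t}|X_s-X'_s|^2]\le c_T\int_0^t\E[\sup_{r\le s}|U_r-V_r|^2]\ds$. Iterating this bound yields $\|\Phi^n(U)-\Phi^n(V)\|_{\mathbb S}^2\le \frac{(c_TT)^n}{n!}\|U-V\|_{\mathbb S}^2$. Equivalently, one may work with a weighted norm $\E[\int_0^T e^{-\beta t}\sup_{s\le t}|\cdot|^2\dt]$, or obtain a contraction on short intervals and concatenate. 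Banach's fixed point theorem then gives existence and uniqueness within the class. Uniqueness among all solutions in $\mathbb{S}^{2,m}$ follows from the same estimate applied to two solutions $X,X'$, which yields $\E[\sup|X-X'|^2]\le c\int_0^T\E[\sup_{r\le s}|X_r-X'_r|^2]\ds$ and hence $X=X'$.

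The main obstacle is not the estimates but the measurability bookkeeping. Three points need care. First, the version of $\mu^U_t$ chosen via the lifting must be jointly progressively measurable, so that $b(s,X_s,\mu^U_s)$ is a progressive integrand. Second, $\mu^U_t$ must coincide with $\cL(U_t\mid\cF^0_t)$ rather than with $\cL(U_t\mid\cF^0_T)$. This holds because $U_t$ is $\cF^0_t\otimes\cF^1_t$-measurable and $W^0_{\cdot}-W^0_t$ is independent of $\cF^0_t\vee\cF^1$, so conditioning on $\cF^0_T$ or on $\cF^0_t$ produces the same law. Third, the fixed point must be shown to stay in the $\Omega^0\times\Omega^1$-class; this follows because the iteration preserves that class and the class is closed in $\mathbb{S}^{2,m}$. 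I would handle these points by first taking Remark \ref{remark:after def of variational eq} (iv) as given, and then checking the second point explicitly via Fubini on the product space.
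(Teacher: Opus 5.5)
Your argument is correct, but it goes further than the paper does. The paper gives no proof of its own and defers to Theorem A.3 of \cite{Djete2022}, where well-posedness of conditional McKean--Vlasov SDEs with common noise is established in a more general framework. Your Picard scheme is a self-contained substitute tailored to the product-space setup used here:
\begin{itemize}
\item realising $\cL(U_t\mid\cF^0_t)$ through the lift $\tilde U_t(\omega^0,\cdot)$ gives a canonical, progressively measurable version of the conditional law;
\item the coupling bound $\W_2(\mu^U_t,\mu^V_t)^2\le\tilde{\E}[|\tilde U_t-\tilde V_t|^2]$ reduces everything to the same It\^o--BDG--Gronwall estimate as in Lemma \ref{lemma:Estimates on Y and Z};
\item you obtain explicitly that the solution depends only on $(\omega^0,\omega^1)$ and is adapted to the filtration generated by $(x_0,W^0,W^1)$, which is exactly what Remark \ref{remark_appendix:Conditional Copy of SDE} uses to build the conditional copy $\hat X$.
\end{itemize}
The scheme also parallels the weighted-norm contraction the paper itself uses for the backward equation in Theorem \ref{theorem_appendix:existence and uniqueness of Cond-MKVBSDE}.

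The citation buys generality and spares the measurability bookkeeping. In exchange, it leaves the reader to check that the cited framework matches Definition \ref{Def:Solution to Cond MKV-SDE}, in particular that the conditioning is on $\cF^0_t$ rather than $\cF^0_T$, and that solutions are sought on the enlarged space $\Omega^0\times\Omega^1\times\Omega^2$. Your second bookkeeping point handles the first of these. Your separate uniqueness estimate handles the second: it only uses that the regular conditional law of $(X_t,X'_t)$ given $\cF^0_t$ is a coupling, so it also rules out solutions that a priori depend on $\omega^2$.
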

        \begin{proof}
            A proof can be found in \cite{Djete2022} Theorem A.3.
        \end{proof}
        
        \begin{remark} \label{remark_appendix:Conditional Copy of SDE}
            Note, that this equation can be solved on $\Omega^0\times \Omega^1$ alone and we do not need to address $\Omega^2$ in any way, but the equation can easily be lifted to the whole space by defining $X_t(\omega)=X_t(\omega_0,\omega_1,\omega_2):=X_t(\omega_0,\omega_1)$.\\
            Now, consider
            \begin{equation}\label{eq:state in appendix lifted}
                \mathrm{d}\hat{X}_t 
                = \hat{b}(t,\hat{X}_t,\hat{\mu}_t)\dt 
                + \hat{\sigma}(t,\hat{X}_t,\hat{\mu}_t)\dW^2_t
                +\hat{\sigma}^0(t,\hat{X}_t,\hat{\mu}_t)\dW^0_t,
                \qquad \hat{X}_0 = \hat{x}_0,
            \end{equation}
            where $x_0\in L^2(\Omega^1,\R^d)$ and $\hat{\mu}_t:=\cL(\hat{X}_t\mid \cF^0_t)$ and $\hat{x}_0,\hat{b},\hat{\sigma},\hat{\sigma}^0$ are equal to $x_0,b,\sigma,\sigma^0$ but instead of $\Omega^1$ they now depend on $\Omega^2$.\\
            If \eqref{eq:state in appendix} has a unique strong solution $X$, then $\hat{X}_t(\omega):=X(\omega_0,\omega_2)$ is a strong solution to \eqref{eq:state in appendix lifted} and this solution is unique up to indistinguishability.
        \end{remark}
    \subsection{Conditional McKean--Vlasov Backward Stochastic Differential Equations}
        Now, we turn towards the conditional McKean--Vlasov backward SDE. For this let $\chi_t:\Omega\to E$ be a progressively measurable process with values in some metric space $(E,e)$ equipped with its Borel $\sigma$-algebra.
        To briefly write conditional expectations with respect to $\cF^0\times \cF^1\times \{\emptyset,\Omega^2\}$ and $\cF^0\times \{\emptyset,\Omega^1\}\times \cF^2$, we denote $(\tilde{\Omega},\tilde{\cF},\tilde{\P})=(\Omega^1,\mathcal{F}^1,\mathbb{P}^1)\otimes (\Omega^2,\mathcal{F}^2,\mathbb{P}^2)$ and define for any random variable $X:\Omega\to \R^d$ its conditional lift with respect to $\Omega^1$
    \begin{equation*}
        \tilde{X}^1:
        \Omega \times \tilde{\Omega}\to \R^d,\quad 
        (\omega,\tilde{\omega})
        =(\omega_0,\omega_1,\omega_2,\tilde{\omega}_1,\tilde{\omega}_2)\mapsto X(\omega_0,\tilde{\omega}_1,\omega_2)
    \end{equation*}
    and also its conditional lift with respect to $\Omega^2$
    \begin{equation*}
        \tilde{X}^2:
        \Omega \times \tilde{\Omega}\to \R^d,\quad 
        (\omega,\tilde{\omega})
        =(\omega_0,\omega_1,\omega_2,\tilde{\omega}_1,\tilde{\omega}_2)\mapsto X(\omega_0,\omega_1,\tilde{\omega}_2)
    \end{equation*}
    and also its 'full' lift given with respect to $\Omega^1\times\Omega^2$
    \begin{equation*}
        \tilde{X}^{1,2}:
        \Omega \times \tilde{\Omega}\to \R^d,\quad 
        (\omega,\tilde{\omega})
        =(\omega_0,\omega_1,\omega_2,\tilde{\omega}_1,\tilde{\omega}_2)\mapsto X(\omega_0,\tilde{\omega}_1,\tilde{\omega}_2)
    \end{equation*}
    Then, for a measurable, bounded $\varphi$, and random variables $X,Y:\Omega\to \R^d$
    \begin{equation*}
        \tilde{\E}\left[\varphi(Y,\tilde{X}^1)\right]:\Omega \to \R^d ,\quad \omega\mapsto \tilde{\E}\left[\varphi(Y(\omega),X(\omega_0,\tilde{\omega}_1,\omega_2))\right]:= \int_{\Omega^1}\varphi(Y(\omega),X(\omega_0,\tilde{\omega}_1,\omega_2))\,\P^1(\mathrm{d}\tilde{\omega})
    \end{equation*}
    constitutes a measurable (due to Fubini) random variable, which can be seen as dependent on the conditional law of $X$ -- in this case with respect to $\cF^0\times \{\emptyset,\Omega^1\}\times \cF^2$ (cf. Lemma 2.4 \cite{CarmonaDelarue2018II}) -- while keeping in $Y$ the original random variable. This works similarly for the other lifts.\\
        For ease of notation, denote,
        \begin{equation*}
            \tilde{\Phi}:=(\tilde{\Phi}^1,\tilde{\Phi}^2,\tilde{\Phi}^{1,2}),
        \end{equation*}
        where $\Phi:\Omega\to D$ and $(D,d)$ is some metric space. This makes it possible to denote linear dependence on the conditional laws of the random variables with respect to $\cF^0\times \{\emptyset,\Omega^1\}\times \cF^2$, $\cF^0\times \cF^1\times \{\emptyset,\Omega^2\}$ and $\cF^0\times \{\emptyset,\Omega^1\}\times \{\emptyset,\Omega^2\}$.\\
        We consider the equation
        \begin{equation}
            \begin{aligned}
                \mathrm{d}P_t=&\tilde{\E}\left[F\left(t,
                P_t,\tilde{P}_t,
                Q_t,\tilde{Q}_t,
                S_t,\tilde{S}_t,
                R_t,\tilde{R}_t,
                \chi_t,\tilde{\chi}_t\right)\right]\dt +R_t\dW^0_t+Q_t\dW^1_t+S_t\dW^2_t\\
                P_T=&\xi \in L^2(\Omega,\cF_T;\R^{m}),
            \end{aligned}\label{eq:Cond-MKVBSDE}
        \end{equation}
        where 
        \begin{equation*}
            F: [0,T]\times (\R^m)^4\times (\R^{m\times d})^{12}\times E^4 \to \R^m
        \end{equation*}
        is product measurable.
        \begin{remark}
            \begin{enumerate}[(i)]
                \item This equation is of conditional McKean--Vlasov type (cf. \cite{CarmonaDelarue2018II} Lemma 2.4).
                \item Note the dependence of the drift on the joint conditional laws of $(P,Q,R,S,\chi)$. This is stronger than just taking coefficients that depend on the conditional law of $(P,Q,R,S)$. This also makes clear why we do not let $F$ be random instead of introducing $\chi$, as then we would have to introduce lifts of $F$ which would make the notation even heavier.
                \item The simple linear dependence on the joint conditional law makes it possible to write the drift using the $\tilde{\E}$-expectation. Still, this formulation is well-suited for treating our adjoint equations as these are always linear in the conditional law, so we do not have to treat general conditional McKean--Vlasov backward SDE, where the coefficients might depend more generally on the joint conditional law of the processes. We do emphasize that this would not be troublesome but we do not make this generalization for brevity- and notation-sake.
                \item In the application, $\chi$ takes the role of $(X,\mu,\alpha)$, i.e. the state, the conditional law of the state and the control process. One might also imagine further possible randomness coming directly from random coefficients etc.
            \end{enumerate}
        \end{remark}
        \begin{definition} \label{definition:solution Cond-MKVBSDE}
            On the probabilistic set-up $(\Omega,\cF,(\cF_t)_{t\in[0,T]}, \P)$ we call a strong solution to the conditional McKean--Vlasov BSDE \eqref{eq:Cond-MKVBSDE} on the interval $[0, T]$ a $4$-tuple $(P,Q,R,S)\in \mathbb{S}^{2,m}\times\Lambda^{2,m\times d}\times\Lambda^{2,m\times d}\times\Lambda^{2,m\times d}$ such that $\P$-a.s. for every $t\in[0,T]$,
            \begin{equation*}
                \begin{aligned}
                   P_t=&\xi-\int_t^T\tilde{\E}\left[F\left(t,
                    P_t,\tilde{P}_t,
                    Q_t,\tilde{Q}_t,
                    S_t,\tilde{S}_t,
                    R_t,\tilde{R}_t,
                    \chi_t,\tilde{\chi}_t\right)\right]\dt \\
                    &-\int_t^T R_t\dW^0_t-\int_t^TQ_t\dW^1_t-\int_t^TS_t\dW^2_t,
                \end{aligned}
            \end{equation*}
            such that all integrals are well-defined.
        \end{definition}
        \begin{assumption} \label{Assumption:existence and uniqueness of Cond-MKVBSDE}
            For fixed $e_1,e_2,e_3,e_4\in E$ let 
            \begin{equation*}
                F(t,\cdot,e_1,e_2,e_3,e_4):(\R^m)^4\times (\R^{m\times d})^{12}\to \R^m
            \end{equation*}
            be Lipschitz and bounded with constant $K$ independent of the choice of $e_1,e_2,e_3,e_4$.
        \end{assumption}
        \begin{theorem} \label{theorem_appendix:existence and uniqueness of Cond-MKVBSDE}
            Under Assumption \ref{Assumption:existence and uniqueness of Cond-MKVBSDE}, a solution to \eqref{eq:Cond-MKVBSDE} in the sense of Definition \ref{definition:solution Cond-MKVBSDE} exists and is unique.
        \end{theorem}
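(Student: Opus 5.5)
The plan is a Picard/Banach fixed point argument on $\mathcal{B}:=\mathbb{S}^{2,m}\times\Lambda^{2,m\times d}\times\Lambda^{2,m\times d}\times\Lambda^{2,m\times d}$, with a weighted norm $\|(P,Q,R,S)\|_\beta^2:=\E[\int_0^T e^{\beta t}(|P_t|^2+|Q_t|^2+|R_t|^2+|S_t|^2)\dt]$. This mirrors the classical Pardoux--Peng proof. The McKean--Vlasov terms are handled by Fubini-type identities for the lifts $\tilde{\Phi}^1,\tilde{\Phi}^2,\tilde{\Phi}^{1,2}$.

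\textbf{Step 1 (the map).} Given $(U,V,Y,Z)\in\mathcal{B}$, freeze the drift and set
\begin{equation*}
f_t:=\tilde{\E}\left[F\left(t,U_t,\tilde{U}_t,V_t,\tilde{V}_t,Z_t,\tilde{Z}_t,Y_t,\tilde{Y}_t,\chi_t,\tilde{\chi}_t\right)\right].
\end{equation*}
By Fubini and the product measurability of $F$, this is a progressively measurable process on $\Omega$. It is bounded by Assumption \ref{Assumption:existence and uniqueness of Cond-MKVBSDE}, hence in $\Lambda^{2,m}$.

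The filtration $(\cF_t)$ is the augmented filtration generated by the three independent Brownian motions $W^0,W^1,W^2$ together with $x_0,\hat{x}_0$. Initial randomness is $\cF_0$-measurable, so the martingale representation theorem still holds. We therefore obtain a unique $(P,Q,R,S)\in\mathcal{B}$ with
\begin{equation*}
P_t=\E\left[\xi-\int_t^T f_s\ds\,\Big|\,\cF_t\right],
\end{equation*}
and $\int_0^\cdot R\dW^0+Q\dW^1+S\dW^2$ the martingale part. Doob's inequality gives $P\in\mathbb{S}^{2,m}$. This defines $\Gamma(U,V,Y,Z):=(P,Q,R,S)$.

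\textbf{Step 2 (contraction).} Take two inputs and let $\delta P,\delta Q,\delta R,\delta S,\delta f$ denote the differences. Itô's formula applied to $e^{\beta t}|\delta P_t|^2$, with expectation taken, gives the standard estimate
\begin{equation*}
\E\int_0^T e^{\beta t}\left(\beta|\delta P_t|^2+|\delta Q_t|^2+|\delta R_t|^2+|\delta S_t|^2\right)\dt\le \frac{1}{\beta}\E\int_0^Te^{\beta t}|\delta f_t|^2\dt.
\end{equation*}
The key point, and the only genuinely new step, is bounding $\E[e^{\beta t}|\delta f_t|^2]$ by the $\beta$-norm of the input difference. By Lipschitz continuity of $F$ and Jensen for $\tilde{\E}$,
\begin{equation*}
|\delta f_t|^2\le cK^2\left(|\delta U_t|^2+\tilde{\E}[|\delta\tilde{U}^1_t|^2+|\delta\tilde{U}^2_t|^2+|\delta\tilde{U}^{1,2}_t|^2]+\dots\right).
\end{equation*}
Because every lift preserves the law under the product measure, $\E[\tilde{\E}[|\tilde{\Phi}^i_t|^2]]=\E[|\Phi_t|^2]$; this is Fubini, since the lift merely renames a coordinate of an independent product factor. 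Hence $\E|\delta f_t|^2\le c'K^2\E[|\delta U_t|^2+|\delta V_t|^2+|\delta Y_t|^2+|\delta Z_t|^2]$. Choosing $\beta$ large, for instance $\beta\ge 2c'K^2+1$ after adjusting the norm weights, makes $\Gamma$ a strict contraction in $\|\cdot\|_\beta$. That norm is equivalent to the $\Lambda$-norm. Completeness then gives a unique fixed point in $\Lambda^2$, and Step 1 upgrades $P$ to $\mathbb{S}^{2,m}$.

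\textbf{Step 3 (conclusion).} The fixed point solves \eqref{eq:Cond-MKVBSDE} in the sense of Definition \ref{definition:solution Cond-MKVBSDE}. For uniqueness, any two solutions are both fixed points of $\Gamma$, so the contraction estimate forces them to agree in $\Lambda^2$. The supremum bound from BDG then yields indistinguishability of the $P$ components.

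The main obstacle is measurability and well-definedness of the lifted terms: each lift must be jointly measurable on $\Omega\times\tilde{\Omega}$, and each $\tilde{\E}$-integral must be progressive. For $\tilde{\Phi}^{1,2}$ this requires that the coordinate permutation preserve the completed product $\sigma$-algebra and its filtration. I would first check this for the uncompleted product via a monotone class argument, then pass to completions using null-set invariance under the measure-preserving coordinate maps.
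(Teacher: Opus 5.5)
Your proposal is correct, but it takes a somewhat different route from the paper.

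\textbf{How the routes differ.}
\begin{itemize}
\item The paper works on $C([0,T];L^2)\times L^2([0,T]\times\Omega)^3$ and freezes only the lifted arguments $\tilde p,\tilde q,\tilde r,\tilde s$. The unknown's own arguments $P_t,Q_t,R_t,S_t$ stay in the driver, so each Picard step is a genuine Lipschitz BSDE, whose well-posedness is quoted from standard theory. Contraction is then shown in the metric $\sup_t e^{\kappa t}\E|\Delta^p_t|^2+\frac34\E\int_0^T e^{\kappa t}(\dots)\dt$ with an explicit $\kappa$.
\item You freeze every argument. Each step then reduces to the martingale representation theorem for the filtration generated by $W^0,W^1,W^2,x_0,\hat{x}_0$. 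You contract in the El Karoui--Peng--Quenez weighted $\Lambda^2$-norm and recover $P\in\mathbb{S}^{2,m}$ afterwards from the image of $\Gamma$.
\end{itemize}
The genuinely McKean--Vlasov step is the same in both proofs: Jensen for $\tilde{\E}$, plus the Fubini identity $\E[\tilde{\E}[|\tilde\Phi^i_t|^2]]=\E[|\Phi_t|^2]$.

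\textbf{What each route buys.}
\begin{itemize}
\item Yours is more self-contained. It also gives a contraction on any horizon simply by taking $\beta$ large.
\item The paper's final step, as written, bounds $\frac12\int_0^T e^{\kappa t}\E|\Delta^p_t|^2\dt$ by $\frac23\sup_t e^{\kappa t}\E|\Delta^p_t|^2$. That needs $T\le 4/3$ unless the Young weights are chosen depending on $T$.
\item In exchange, the paper's route isolates exactly the new conditional-law dependence and leans on existing BSDE theory for the rest.
\end{itemize}

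\textbf{A small slip to fix.} The ``standard estimate'' you display does not hold with constants $\beta$ and $1/\beta$. For example, take $\delta f\equiv 1$ and $\delta Q=\delta R=\delta S=0$; for large $\beta$ the left side is then about twice the right side. Instead, use $2|\langle\delta P,\delta f\rangle|\le\frac{\beta}{2}|\delta P|^2+\frac{2}{\beta}|\delta f|^2$, which gives
\[
\E\int_0^T e^{\beta t}\bigl(\tfrac{\beta}{2}|\delta P_t|^2+|\delta Q_t|^2+|\delta R_t|^2+|\delta S_t|^2\bigr)\dt\le\frac{2}{\beta}\,\E\int_0^T e^{\beta t}|\delta f_t|^2\dt .
\]
This serves exactly the same purpose in your contraction argument.
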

        \begin{proof}
            We will make a classical fixed point argument on the space
            \begin{equation*}
                \mathbb{L}:=
                C\left([0, T], L^2\left(\Omega,\R^{m}\right)\right)
                \times L^2([0,T]\times\Omega,\R^{m\times d})
                \times L^2([0,T]\times\Omega,\R^{m\times d})
                \times L^2([0,T]\times\Omega,\R^{m\times d}).
            \end{equation*}
            Now, fix 
            \begin{equation*}
                \Phi:=(p,q,r,s)\in \mathbb{L}
            \end{equation*}
            and consider the standard BSDE
             \begin{equation}
            \begin{aligned}
                \mathrm{d}P_t=&\tilde{\E}\left[F\left(t,
                P_t,\tilde{p}_t,
                Q_t,\tilde{q}_t,
                S_t,\tilde{s}_t,
                R_t,\tilde{r}_t,
                \chi_t,\tilde{\chi}_t\right)\right]\dt
                +R_t\dW^0_t+Q_t\dW^1_t+S_t\dW^2_t\\
                P_T=&\xi \in L^2(\Omega,\cF_T;\R^{m}).
            \end{aligned}\label{eq:Cond-MKVBSDE with fixed in proof}
        \end{equation}
            By Assumption \ref{Assumption:existence and uniqueness of Cond-MKVBSDE}, this BSDE has a unique solution, which we will denote $(P^{\Phi},Q^{\Phi},R^{\Phi},S^{\Phi})\in \mathbb{S}^{2,m}\times\Lambda^{2,m\times d}\times\Lambda^{2,m\times d}\times \Lambda^{2,m\times d}\subset \mathbb{L}$ (cf. \cite{Pardoux2014} Chapter 5,\cite{CarmonaDelarue2018I} Chapter 4). Notice that  $(\cF_t)_{t\geq 0 }$ coincides with the completed filtration generated by $(x_0,W^0_t,W^1_t,W^2_t)_{t\geq 0}$. Now, consider the map
            \begin{equation*}
                \Xi: \mathbb{L}\to \mathbb{L},\quad 
                \Phi\mapsto 
                (P^{\Phi},Q^{\Phi},R^{\Phi},S^{\Phi}).
            \end{equation*}
            Denoting further $\phi:=(\mathfrak{p},\mathfrak{q},\mathfrak{r},\mathfrak{s}) \in \mathbb{L}$, we will show, that $\Xi$ is a contraction, with respect to the metric 
            \begin{align*}
                &\rho(\Phi,\phi)\\
                &:=\sup _{t \in[0, T]} e^{\kappa t} \E\left[\left\|p_t- \mathfrak{p}_t\right\|^2\right]
                +\frac{3}{4}\E\left[\int_0^T e^{\kappa t} \left\| q_t-\mathfrak{q}_t\right\|^2+e^{\kappa t} \left\| r_t-\mathfrak{r}_t\right\|^2+e^{\kappa t} \left\| s_t-\mathfrak{s}_t\right\|^2\dt\right],
            \end{align*}
            where $\kappa>0$ will be chosen later, which gives us a unique solution to our desired equation by usual fixed point theorems. \\
            So denote 
            $\Delta^P=P^{\Phi}-P^{\phi}$, 
            $\Delta^{Q}=Q^{\Phi}-Q^{\phi}$, $\Delta^{R}=R^{\Phi}-R^{\phi}$ and 
            $\Delta^{S}=S^{\Phi}-S^{\phi}$ and $\Delta^p=p-\mathfrak{p},\Delta^{q}=q-\mathfrak{q},\Delta^{r}=r-\mathfrak{r}$ and $\Delta^{s}=s-\mathfrak{s}$. By Ito's formula, for every $\tau\in[0,T]$
            \begin{align*}
                0=&\E[e^{\kappa T}\|\Delta^P_T\|^2]\\
                =&\E\Big[e^{\kappa \tau}\|\Delta^P_\tau\|^2
                -\int_\tau^T e^{\kappa t}2\langle \Delta^P_t,\tilde{\E}\left[F\left(t,
                P^\Phi_t,\tilde{p}_t,
                Q^\Phi_t,\tilde{q}_t,
                S^\Phi_t,\tilde{s}_t,
                R^\Phi_t,\tilde{r}_t,
                \chi_t,\tilde{\chi}_t\right)\right]\\
                &-\tilde{\E}\left[F\left(t,
                P^\phi_t,\tilde{\mathfrak{p}}_t
                Q^\phi_t,\tilde{\mathfrak{q}}_t,
                S^\phi_t,\tilde{\mathfrak{s}}_t,
                R^\phi_t,\tilde{\mathfrak{r}}_t,
                \chi_t,\tilde{\chi}_t\right)\right]\rangle\\
                &+e^{\kappa t}\|\Delta^{Q}_t\|^2 
                +e^{\kappa t}\|\Delta^{R}_t\|^2
                + e^{\kappa t}\|\Delta^{S}_t\|^2 
                +\kappa e^{\kappa t}\|\Delta^{P}_t\|^2\dt \Big]
            \end{align*}
            so rearranging gives
            \begin{align*}
                &\E\Big[ e^{\kappa \tau}\|\Delta^P_\tau\|^2
                +\int_\tau^T e^{\kappa t}\|\Delta^{Q}_t\|^2 
                + e^{\kappa t}\|\Delta^{R}_t\|^2
                + e^{\kappa t}\|\Delta^{S}_t\|^2 \dt 
                +\kappa\int_\tau^T e^{\kappa t}\|\Delta^{P}_t\|^2\dt \Big]\\
                =&\E\Big[\int_\tau^T 2e^{\kappa t}\langle \Delta^P_t,\tilde{\E}\left[F\left(t,
                P^\Phi_t,\tilde{p}_t,
                Q^\Phi_t,\tilde{q}_t,
                S^\Phi_t,\tilde{s}_t,
                R^\Phi_t,\tilde{r}_t,
                \chi_t,\tilde{\chi}_t\right)\right]\\
                &-\tilde{\E}\left[F\left(t,
                P^\phi_t,\tilde{\mathfrak{p}}_t,
                Q^\phi_t,\tilde{\mathfrak{q}}_t,
                S^\phi_t,\tilde{\mathfrak{s}}_t,
                R^\phi_t,\tilde{\mathfrak{r}}_t,
                \chi_t,\tilde{\chi}_t\right)\right]\rangle \dt\Big].
            \end{align*}
            Now, using the uniform Lipschitz continuity from Assumption \ref{Assumption:existence and uniqueness of Cond-MKVBSDE}, taking the supremum over $\tau \in[0,T]$, using the Young-inequality and using Fubini we arrive at
            \begin{align*}
                &\sup_{t\in[0,T]}\E\left[e^{\kappa t}\|\Delta^P_t\|^2\right]+\E\left[\int_\tau^T e^{\kappa t}\|\Delta^{Q}_t\|^2 
                + e^{\kappa t}\|\Delta^{R}_t\|^2
                + e^{\kappa t}\|\Delta^{S}_t\|^2 \dt  
                +\kappa\int_0^T e^{\kappa t}\|\Delta^{P}_t\|^2\dt\right]\\
                \leq &\E\Bigg[2K \int_0^T 
                e^{\kappa t}\|\Delta^P_t\|\bigg(
                \| \Delta^P_t\| 
                +\tilde{\E}[\|\tilde{p}_t-\tilde{\mathfrak{p}}_t\|]
                +\| \Delta^Q_t\| 
                +\tilde{\E}[\|\tilde{q}_t-\tilde{\mathfrak{q}}_t\|]\\
                &+\| \Delta^R_t\| 
                +\tilde{\E}[\|\tilde{r}_t-\tilde{\mathfrak{r}}_t\|]
                +\| \Delta^S_t\| 
                +\tilde{\E}[\|\tilde{s}_t-\tilde{\mathfrak{s}}_t\|]\dt\Bigg]\\
                \leq & \E\Bigg[ \int_0^T (7\cdot4\cdot (2K)^2+2K)e^{\kappa t}\|\Delta^P_t\|^2+\frac{e^{\kappa t}}{4\cdot 7}\bigg(
                2\|\Delta^p_t\|
                +\| \Delta^Q_t\| 
                +2\|\Delta^q_t\|\\
                &+\| \Delta^R_t\|
                +2\|\Delta^r_t\|
                +\| \Delta^S_t\|
                +2\|\Delta^s_t\|\bigg)^2 \dt\Bigg]\\
                \leq & \E\Bigg[ \int_0^T (7\cdot4\cdot (2K)^2+2K)e^{\kappa t}\|\Delta^P_t\|^2+\frac{e^{\kappa t}}{2}\left(
                \|\Delta^p_t\|^2
                +\|\Delta^q_t\|^2
                +\|\Delta^r_t\|^2
                +\|\Delta^s_t\|^2\right)
                \\
                &+\frac{e^{\kappa t}}{4}\left(\| \Delta^Q_t\|^2 
                +\| \Delta^R_t\|^2
                +\| \Delta^S_t\|^2
                \right) \dt\Bigg].
            \end{align*}
            Choosing $\kappa=112\cdot K^2+2K$ results in 
            \begin{align*}
                &\sup_{t\in[0,T]}\E\left[e^{\kappa t}\|\Delta^P_t\|^2\right]+\frac{3}{4}\E\left[\int_\tau^T e^{\kappa t}\|\Delta^{Q}_t\|^2 
                + e^{\kappa t}\|\Delta^{R}_t\|^2
                + e^{\kappa t}\|\Delta^{S}_t\|^2 \dt\right]  \\
                \leq & \E\Bigg[ \int_0^T \frac{e^{\kappa t}}{2}\left(
                \|\Delta^p_t\|^2
                +\|\Delta^q_t\|^2
                +\|\Delta^r_t\|^2
                +\|\Delta^s_t\|^2\right) \dt\Bigg]\\
                \leq & \E\Bigg[\frac{2}{3}\sup_{t\in[0,T]}e^{\kappa t}\|\Delta^p_t\|^2
                + \frac{2}{3}\frac{3}{4}\int_0^T e^{\kappa t}\left(
                \|\Delta^q_t\|^2
                +\|\Delta^r_t\|^2
                +\|\Delta^s_t\|^2\right) \dt\Bigg],
            \end{align*}
            so we get a contraction with coefficient $\frac{2}{3}$.
        \end{proof}

\setcitestyle{numbers}
\bibliographystyle{plainnat}
\bibliography{refs}

\end{document}